\definecolor{darkblue}{rgb}{0,0,0.7} %
\definecolor{green}{RGB}{57,181,74} %
\definecolor{violet}{RGB}{147,39,143} %
\newcommand{\darkblue}{\color{darkblue}} %
\newtheorem{thmUniv}{Theorem}
\newtheorem{theorem}{Theorem}[section]
\newtheorem{corollary}[theorem]{Corollary}
\newtheorem{proposition}[theorem]{Proposition}
\newtheorem{lemma}[theorem]{Lemma}
\newtheorem{conjecture}[theorem]{Conjecture}
\newtheorem*{theorem*}{Theorem}%
\theoremstyle{definition}
\newtheorem{definition}[theorem]{Definition}
\newtheorem{example}[theorem]{Example}
\newtheorem{remark}[theorem]{Remark}
\newtheorem{question}[theorem]{Question}
\newcommand{\R}{\mathbb{R}} %
\renewcommand{\c}[1]{{\mathcal{#1}}} %
\renewcommand{\b}[1]{{\boldsymbol{#1}}} %
\newcommand{\scr}[1]{{\mathscr{#1}}} %
\renewcommand{\emptyset}{\varnothing} %
\renewcommand{\epsilon}{\varepsilon} %
\newcommand{\ssm}{\smallsetminus} %
\newcommand{\inner}[1]{\left\langle#1\right\rangle}
\DeclareMathOperator{\conv}{conv} %
\DeclareMathOperator{\interior}{int}
\DeclareMathOperator{\facetnumber}{\#_{\text{facets}}}
\DeclareMathOperator{\coatomnumber}{\#_{\text{co-atoms}}}
\DeclareMathOperator{\corank}{\text{co-rk}}
\DeclareMathOperator{\argmin}{argmin}
\DeclareMathOperator{\argmax}{argmax}
\DeclareMathOperator{\GPS}{\boxdot}
\newcommand{\ie}{\textit{i.e.}~} %
\newcommand{\eg}{\textit{e.g.}~} %
\newcommand{\defn}[1]{\textsl{\darkblue #1}} %
\newcommand{\mathdefn}[1]{{\darkblue #1}} %
\newcommand{\pol}[1][P]{\mathsf{#1}}
\newcommand{\polQ}{{\pol[Q]}}
\newcommand{\polF}{{\pol[F]}}
\newcommand{\polC}{{\pol[C]}}
\newcommand{\polR}{{\pol[R]}}
\newcommand{\polS}{{\pol[S]}}
\newcommand{\polT}{{\pol[T]}}
\newcommand{\DefoCone}[1][\pol]{\mathbb{DC}(#1)} %
\newcommand{\EDefoCone}[1][\pol]{\mathbb{DC}_E(#1)} %
\newcommand{\FDefoCone}[1][\pol]{\mathbb{DC}_F(#1)} %
\newcommand{\SC}[1][n]{\mathbb{SC}_{#1}} %
\newcommand{\GP}[1][n]{\mathbb{GP}_{#1}} %
\newcommand{\permuto}[1][n]{\pol[\Pi]_{#1}} %
\DeclareMathOperator{\lineal}{lineal} %
\DeclareMathOperator{\FL}{\scr L} %
\newcommand{\comref}{\wedge }%
\newcommandx{\biflower}[2][1=\polQ, 2=\pol]{\c F_{#2, #1}}
\newcommandx{\biflowerE}[3][1=\polQ, 2=\pol, 3=\b E]{\c F_{#2, #1, #3}}
\newcommand{\lift}[1]{\uparrow\!{#1}}
\newcommand{\proj}[1]{\downarrow\!{#1}}
\newcommand{\liftmap}{\phi^\uparrow}
\newcommand{\projmap}{\phi^\downarrow}
\newcommand{\isquotient}{\preceq}
\newcommandx{\quotientQP}[2][1=\pol,2=\polQ]{#2\isquotient #1}
\newcommandx{\quotientQPE}[3][1=\pol,2=\polQ,3=\b E]{#2\isquotient_{#3}#1}
\newcommandx{\genquotientQP}[2][1=\pol,2=\polQ]{#2\prec #1}
\newcommandx{\admisscone}[2][1=\pol, 2=\polQ]{\mathcal{C}(#1, #2)}
\newcommandx{\admissconeE}[3][1=\pol, 2=\polQ, 3=\b E]{\mathcal{C}(#1, #2, #3)}
\newcommandx{\Iadmisscone}[2][1=\pol, 2=\polQ]{\widetilde{\mathcal{C}}(#1, #2)}
\newcommandx{\GPsum}[2][1=\polQ, 2=\pol]{#1\GPS#2}
\newcommandx{\genericGPsum}[2][1=\polQ, 2=\pol]{#1\square#2}
\newcommandx{\liftGPsum}[2][1=\polQ, 2=\pol]{\lift\!(\GPsum[#1][#2])}
\newcommandx{\liftGPsumOLD}[2][1=\polQ, 2=\pol]{\psi(#1{\GPS}#2)}
\newcommandx{\liftGPsumLM}[2][1=\polQ, 2=\pol]{#1{\,\raisebox{-0.13cm}{\rotatebox[origin=l]{90}{$\mapsto$}}\,}#2}
\newcommandx{\GPsumLM}[4][1=\polQ, 2=\pol, 3=\lambda, 4 = \mu]{#1\GPS_{#3, #4}#2}
\newcommand{\deformed}{\trianglelefteq}
\newcommand{\normeq}{\sim}%
\title{Many rays of the submodular cone}
\author{Georg Loho}
\address[Georg Loho]{Freie Universit\"at Berlin \& University of Twente}
\email{georg.loho@math.fu-berlin.de}
\author{Arnau Padrol}
\address[Arnau Padrol]{Universitat de Barcelona \& Centre de Recerca Matemàtica}
\email{arnau.padrol@ub.edu}
\author{Germain Poullot}
\address[Germain Poullot]{Universität Osnabrück}
\email{germain.poullot@uni-osnabrueck.de} 
\thanks{The research of A.P. is partially supported by the Spanish project PID2022-137283NB-C21 of MCIN/AEI/10.13039/501100011033 / FEDER, UE, the Spanish--German project COMPOTE (AEI PCI2024-155081-2 \& DFG 541393733), the Severo Ochoa and María de Maeztu Program for Centers and Units of Excellence in R\&D (CEX2020-001084-M), the Departament de Recerca i Universitats de la Generalitat de Catalunya (2021 SGR 00697), and the French--Austrian project PAGCAP (ANR-21-CE48-0020 \& FWF I 5788). }
\begin{document}

\begin{abstract}
The study of the cone of submodular functions goes back to Jack Edmonds' seminal 1970 paper, which already highlighted the difficulty of characterizing its extreme rays. Since then, researchers from diverse fields have sought to characterize, enumerate, and bound the number of such rays. In this paper, we introduce an inductive construction that generates new rays of the submodular cone. This allows us to establish that the $n^\text{th}$ submodular cone has at least $2^{2^{n-2}}$ rays, which improves upon the lower bound obtained from Hien Q. Nguyen's 1986 characterization of indecomposable matroid polytopes by a factor of order $\sqrt{n^3}$ in the exponent.
\end{abstract}

\maketitle

\section{Introduction}

\subsection{Background}
A submodular function is a set function that encodes dependence among finitely many objects.  %
Their study goes back to the work by Choquet from an analytic point of view~\cite{Choquet:1955}, and to Edmonds' foundational work in combinatorial optimization~\cite{Edmonds-SubmodularMatroidsPolyhedra}.  
Submodular functions are fundamental mathematical objects, which explains their ubiquity across different research areas. In particular, they arise as important tools in game theory~\cite{Shapley:1971}, algebraic combinatorics~\cite{AguiarArdila:2023}, and machine learning~\cite{Bach:2013,bilmes-submodularity-in-ai-ml-2022}. 
They generalize matroid rank functions~\cite{Whitney:1935,Nakasawa:1935} and form the fundament of discrete convex analysis~\cite{Murota:2003}, a theory of discrete convex functions that has recently gained new momentum through its deep connections with Lorentzian polynomials~\cite{BraendenHuh:2020}.

Submodular functions can be broken down in indecomposable pieces: a submodular function~$f$ is \emph{irreducible} if for any representation $f = g + h$ with submodular functions $g$ and $h$, the latter differ only by a scalar multiple and addition of a modular function from $f$. Actually, the set of submodular functions forms a convex cone, closed under addition and multiplication by nonnegative scalars. The rays of this cone (once modular functions are quotiented) are given by the irreducible submodular functions. Each submodular funcion can be broken down as a nonnegative sum of these indecomposable pieces.
These form the building blocks to represent all submodular functions, and already Edmonds remarks in his seminal paper~\cite{Edmonds-SubmodularMatroidsPolyhedra} that their characterization is a difficult task.  

In the same paper, he establishes the intimate connection of submodular functions with their associated polytopes.
These are known under various names due to their occurence in diverse fields: \emph{deformed permutahedra}, \emph{submodular base polyhedra}, \emph{M-convex polytopes}, \emph{(discrete) polymatroids} or \emph{generalized $n$-permutahedra}.
Postnikov observed that these polytopes are exactly the deformations of the regular \emph{permutahedron}~\cite{Postnikov-PermutohedraAssociahedraAndBeyond}. 
We take this point of view to derive our results. 
This goes back to McMullen~\cite{McMullen-typeCone} who studied the \emph{deformation cone} associated with a polytope $\pol$; that is the set of those polytopes whose normal fan coarsens the normal fan of $\pol$. 
For the (regular) permutahedron, this normal fan is just the braid arrangement given by the hyperplanes of the form $x_i = x_j$, where the full-dimensional cones are in bijection with permutations. 
In the language of submodular functions, this exactly corresponds to the fact that they are characterized by the greedy algorithm, selecting an optimal solution associated with a permutation of the elements of the ground set.

In the language of deformation cones, the polytopes associated with the irreducible submodular functions are exactly the \emph{Minkowski indecomposable} polytopes forming the rays of the deformation cone. 
Deformation cones also play an important role in toric geometry where they arise under the name \emph{nef cones}, which encode the embeddings of the associated toric variety into projective space~\cite{CoxLittleSchenckToric}.
For the permutahedral variety, the nef cone is just the cone of submodular functions. 
The question for an understanding of the rays of this cone (and generalizations) has also been posed in this context~\cite{HuhPhD,ArdilaCastilloEurPostinkov2020-CoxeterSubmodular}. Finally, note that this question also arose in economics: submodular functions (and their important subclass the gross substitute valuations) are widely used as valuation functions for goods, \eg in auctions, which makes their encoding a question of practical importance~\cite{Leme:2017}. 

The quest for understanding these indecomposable pieces has inspired significant work in different communities.
The complete list of the $37$ rays of the submodular cone on $4$ elements was already computed by Shapley in 1971~\cite{Shapley:1971}. For $n=5$, 
there are $117\, 978$ rays \cite{studeny2000extreme,CsirmazCsirmaz-AttemptingTheImpossible}, and for $n = 6$, the number is estimated to lie between $10^{13}$ and $10^{20}$, with exact enumeration considered ``impossible'' \cite{CsirmazCsirmaz-AttemptingTheImpossible}.
Other works establish various criteria for irreducibility~\cite{Nguyen1986-SemimodularFunctions,GirlichHodingSchneidereitZaporozhets:1995,Kashiwabara:2000,StudenyKroupa:2016,GrabischKroupa:2019}, or provide explicit constructions of families~\cite{RosenmullerWeidner1973-ExtremeConvexSetFunctions,studeny2016basicfactsconcerningsupermodular,HaimanYao:2023,PadrolPoullot-NewCriteriaIndecomposability}, including the identification of connected matroids as rays~\cite{Nguyen1986-SemimodularFunctions}, which was the largest known family until now.

Although a complete explicit characterization is out of reach, current estimates can still be refined. In this work, we present a constructive inductive procedure to enumerate a large number of irreducible submodular functions, enabling us to derive novel lower bounds on their total number.

\subsection{Results}

As stated above, we study submodular functions via their associated polytopes. 
For a submodular function $f \colon 2^{[n]} \to \R$, we consider the associated \emph{deformed permutahedron}, which is the polytope $\{\b x \in \R^n ~;~ \forall S \subseteq [n],\,\sum_{i \in S}x_i \leq f(S) \text{ and } \sum_{i \in [n]}x_i = f([n]) \}$. 
We will mainly use the characterization of these polytopes in terms of their edge directions: a polytope is a deformed permutahedron if and only if all edge directions are of the form $\b e_i - \b e_j$.
The projection of a generalized permutahedron, dropping the last coordinate, gives rise to a \emph{generalized polymatroid}.
These are exactly those polytopes with edge directions of the form $\b e_i - \b e_j$ and $\b e_i$.
Hence, from a polytope point of view, generalized polymatroids and deformed permutahedra are basically the same thing. 
However, viewing generalized polymatroids as objects in their own right has a conceptual advantage that has already proven useful in the original work~\cite{FrankTardos:1988}. 

This stems from the crucial observation that a generalized polymatroid is uniquely given by a pair of faces that are deformed permutahedra, namely the minimal and the maximal face with respect to taking the coordinate sum as objective function (see Theorem~\ref{thm:bijections}).
These are the \emph{top} and \emph{bottom} faces. 

For the other direction, to associate a generalized polymatroid to a pair $(\pol,\polQ)$ of deformed permutahedra, the latter two have to lie in space in a compatible way.
Luckily, by suitably translating the two polytopes, this can actually always be guaranteed. 
By lifting the resulting generalized polymatroid one dimension higher, one can construct a new deformed permutahedron.
This is the basis of our construction.

Even if the two deformed permutahedra are \emph{indecomposable}, the resulting polytope is not necessarily indecomposable.
To get an indecomposable deformed permutahedron, we need to dilate $\pol$ and $\polQ$, and to translate them suitably (Proposition~\ref{prop:SeedIsRay}).
To find these translated dilates, the crucial insight is that the set of generalized polymatroids whose top and bottom faces are deformations of a given pair of deformed permutahedra, the \emph{admissibility cone}, is isomorphic to a deformation cone (\Cref{thm:AdmissibilityConeIsDeformationCone}).
The rays of the admissibility cone give us our desired higher dimensional indecomposable deformed permutahedra.
However, this does not always allow for our recursive construction (see \Cref{exm:FamillyOfPairwiseFertileN4}) -- we also need to chose the original pairs carefully (\Cref{def:FertilePair}).

Our construction ensures that we can actually iterate the process and still get rays from new pairs. 
Let $\SC = \{f \colon 2^{[n]} \to \R ~;~ f \text{ submodular}\}$. 

\begin{thmUniv}[{\Cref{cor:CollectionOfRays}}]
Given a suitably chosen subset of the rays of the submodular cone $\SC$, each pair gives rise to (at least one) ray of the submodular cone $\SC[n+1]$. 
Applying the same construction to the resulting subset of rays of $\SC[n+1]$ yields rays of $\SC[n+2]$, and so on.  
\end{thmUniv}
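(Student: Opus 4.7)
The plan is to define, by induction on $n$, a family $\mathcal{R}_n$ of rays of the submodular cone $\SC$ such that every ordered pair in $\mathcal{R}_n \times \mathcal{R}_n$ is fertile in the sense of \Cref{def:FertilePair}. The construction described in the statement will then produce a new family $\mathcal{R}_{n+1}$ of rays of $\SC[n+1]$ with the same pairwise fertility property, and the process iterates indefinitely.

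For the base of the induction, I would exhibit an explicit pairwise fertile family at a small starting dimension, in the spirit of the examples preceding \Cref{exm:FamillyOfPairwiseFertileN4}. For the inductive step, assume $\mathcal{R}_n$ has been built. Given any ordered pair $(\pol,\polQ) \in \mathcal{R}_n\times\mathcal{R}_n$, fertility guarantees that the admissibility cone $\admisscone$ is nontrivial; by \Cref{thm:AdmissibilityConeIsDeformationCone} this cone is isomorphic to a deformation cone, and \Cref{prop:SeedIsRay} produces at least one of its rays, whose associated generalized polymatroid lifts to an indecomposable deformed permutahedron in $\R^{n+1}$, hence to a ray of $\SC[n+1]$. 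Collecting one such lifted ray for every admissible pair defines $\mathcal{R}_{n+1}$.

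The crux of the argument, and the main obstacle, is the preservation of pairwise fertility from level $n$ to level $n+1$. Given two lifts $\polR, \polR' \in \mathcal{R}_{n+1}$ coming from pairs $(\pol,\polQ)$ and $(\pol',\polQ')$, their top and bottom faces are, by construction, translates of $\pol,\polQ$ and $\pol',\polQ'$ respectively, so the fertility of $(\polR,\polR')$ should reduce to combinatorial compatibility conditions on the underlying pairs in $\mathcal{R}_n$, which hold by the inductive hypothesis. The delicate point is avoiding the degeneration highlighted in \Cref{exm:FamillyOfPairwiseFertileN4}, where a naive combination of pairwise fertile pairs fails to remain fertile one dimension up; I expect this to be circumvented by a canonical (say lexicographic) choice of translation and dilation in \Cref{prop:SeedIsRay}, so that the lift is a well-defined function of the pair rather than a set-valued correspondence, and so that the combinatorial types of the top and bottom faces are controlled uniformly in $n$.

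Once propagation of fertility is established, an immediate induction on $n$ yields an infinite tower $\mathcal{R}_n \to \mathcal{R}_{n+1} \to \mathcal{R}_{n+2} \to \cdots$ of pairwise fertile families of rays, each stage feeding into the next through the construction, which proves the theorem. I expect the difficulty to concentrate almost entirely in making fertility a closed condition under the lift $\lift{(\GPsum)}$; once this is secured, the rest of the argument is essentially bookkeeping combining \Cref{prop:SeedIsRay} and \Cref{thm:AdmissibilityConeIsDeformationCone}.
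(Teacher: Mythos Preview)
Your outline has the right architecture: build a pairwise-fertile family $\mathcal{R}_n$, use \Cref{prop:SeedIsRay} to lift each ordered pair to a ray of $\SC[n+1]$, and iterate. You also correctly isolate the crux as the preservation of pairwise fertility from level $n$ to level $n+1$. But you do not actually supply a mechanism for that step, and the mechanism you gesture at is off-target. You suggest that a canonical (lexicographic) choice of seed is what prevents fertility from degenerating; in fact the paper's heredity result (\Cref{thm:FertilityIsHeriditary}) holds \emph{regardless} of which seed is used, and more generally for any indecomposable lifts $\liftGPsum[\polS][\polR]$ and $\liftGPsum$: only the fertility of the pair $(\polS,\pol)$ of the bottom face of the first lift and the top face of the second matters, not how the lifts were produced. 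So the ``canonical choice'' is a red herring, and controlling the combinatorial types of the lifts is not what makes the argument go through.

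What is missing from your plan are the two concrete ingredients the paper uses. First, a workable characterization of \emph{non}-fertility (\Cref{thm:NotFertile}): $(\pol,\polQ)$ is not fertile if and only if for every coordinate $i$ some permutation $\tau$ simultaneously realizes $\min_\sigma \pol^\sigma_i$ and $\max_\sigma \polQ^\sigma_i$. Second, a projection lemma (\Cref{lem:MaximizerInLowerDimension}): a permutation in $\c S_{n+1}$ maximizing the $i$th coordinate over a lifted GP-sum restricts to one in $\c S_n$ maximizing it over the top face, and dually for minimizers and the bottom face. The heredity proof is then a two-line contraposition: if the lifted pair were not fertile, the witnessing permutation would project to a witness of non-fertility for $(\polS,\pol)$. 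Your proposal contains neither the min/max criterion nor the projection step, so the key implication remains unproven. (A minor side remark: fertility is not what makes $\admisscone$ ``nontrivial''---that cone is always $(2n+2)$-dimensional---but rather what guarantees a ray with $\lambda,\mu>0$, i.e.\ one whose top and bottom faces are normally equivalent to $\pol$ and $\polQ$ rather than to proper deformations.)
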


Our construction allows us to give an improved lower bound on the number~$t_n$ of rays of the submodular cone~$\SC$ (\cref{cor:ExplicitLowerBoundNumberOfRays}), which we complete with an asymptotic upper bound (\cref{prop:upperbound}). 

\begin{thmUniv}[{\Cref{thm:UpperLowerBoundsTnSn}}]
For $n$ sufficiently large:
$$2^{2^{n-2}} \,\leq\, t_n \,\leq\, n^{2^n}$$

\end{thmUniv}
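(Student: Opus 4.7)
The two bounds are proved independently.

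For the lower bound, my plan is to iterate \Cref{cor:CollectionOfRays}. Let $N_n$ denote the size of the distinguished fertile subset of rays of $\SC$ provided by the recursive construction. Since each ordered pair of fertile rays --- including pairs of the form $(\pol, \pol)$ --- yields at least one distinct fertile ray of $\SC[n+1]$, we obtain the recurrence $N_{n+1} \geq N_n^2$. Iterating this from a base case $N_{n_0} \geq 2^{2^{n_0-2}}$ then gives $N_n \geq 2^{2^{n-2}}$ for all $n \geq n_0$, and hence $t_n \geq 2^{2^{n-2}}$. A base case with $n_0 = 4$ is feasible: by Shapley's enumeration $t_4 = 37 \geq 16 = 2^{2^2}$, and Nguyen's indecomposable matroid polytopes furnish natural candidates from which fertility can be verified for at least $16$ of these rays.

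For the upper bound, I would use a direct polyhedral counting estimate. The cone $\SC$ is cut out by $F = \binom{n}{2}\, 2^{n-2}$ elementary submodularity inequalities inside an ambient space of effective dimension $D = 2^n - n - 1$ (after quotienting by the $(n+1)$-dimensional lineality space of modular functions). Each extreme ray lies on at least $D-1$ linearly independent facet-defining hyperplanes, so $t_n \leq \binom{F}{D-1}$. Stirling-type asymptotics, noting that $F/(D-1) = O(n^2)$, combined with a sharpening such as the dual Upper Bound Theorem for the induced section of the cone, then yield $t_n \leq n^{2^n}$ for $n$ sufficiently large (the exact constant of $1$ in the exponent of $n^{2^n}$ requires care, since the naive estimate $(eF/(D-1))^{D-1}$ gives only $n^{O(2^n)}$).

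The principal difficulty lies in the lower bound: one must ensure both that the dilation-and-translation construction produces a ray of $\SC[n+1]$ that is itself fertile, and that distinct pairs yield distinct offspring, so that the recursion is sustainable across all dimensions simultaneously. These structural properties are precisely the content of the chain of results culminating in \Cref{cor:CollectionOfRays}, which we take as input here; granted this, the lower bound reduces to the elementary recursion above. The upper bound, by contrast, amounts to a routine asymptotic calculation in polyhedral combinatorics.
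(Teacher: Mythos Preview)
Your overall strategy matches the paper's on both sides: the lower bound via the squaring recursion $N_{n+1}\geq N_n^2$ from a pairwise-fertile family (exactly \Cref{cor:CollectionOfRays} and \Cref{cor:LowerBoundNumberOfRays}), and the upper bound via the Upper Bound Theorem applied to the $(2^n-n-2)$-polytope with $\binom{n}{2}2^{n-2}$ facets (exactly \Cref{prop:upperbound}). The upper-bound sketch is fine; you correctly identify that the naive estimate $\binom{F}{D-1}$ only gives $n^{2\cdot 2^n}$, and that the UBT, which replaces $D-1$ by $\lfloor D/2\rfloor \approx 2^{n-1}$, is what halves the exponent. The paper carries this out verbatim to get $t_n\leq 2\bigl(\tfrac{e^{1/2}}{4}n\bigr)^{2^n}\leq n^{2^n}$.

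The genuine gap is your base case for the lower bound. You propose $n_0=4$ and claim that $16$ pairwise-fertile rays can be found among the $37$ rays of $\SC[4]$, invoking $t_4=37$ and Nguyen's connected matroid polytopes. But $t_4=37$ counts \emph{all} rays, not pairwise-fertile ones, and fertility (\Cref{def:FertilePair}) is a coordinate-dependent condition unrelated to being a matroid polytope. In fact the paper's explicit search (\Cref{exm:FamillyOfPairwiseFertileN4}) produces only $11$ pairwise-fertile rays of $\SC[4]$, and since $11<16=2^{2^{2}}$, starting the recursion at $n_0=4$ yields only $t_n\geq 11^{2^{n-4}}$, which is strictly smaller than $2^{2^{n-2}}$. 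The paper's stated bound $2^{2^{n-2}}$ actually rests on the $n_0=5$ base case of \Cref{exm:FamillyOfPairwiseFertileN5}, where a computer search exhibits $656>256=2^{2^{3}}$ pairwise-fertile rays of $\SC[5]$; this gives $t_n\geq 656^{2^{n-5}}\geq 2^{2^{n-2}}$ for $n\geq 5$. So your recursion is correct, but the seed you propose does not work, and the one that does requires substantially more computational input than you suggest.
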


This improves the previously known bounds reported by Haiman and Yao in~\cite{HaimanYao:2023}, namely: $2^{c\cdot2^n\cdot n^{-3/2}}\leq t_n\leq n^{2^{n+1}}$ for some explicit $c > 0$. In particular, the best lower bound was given by the number 
 the number $m_n$ of rays coming from (connected) matroids~\cite{Nguyen1986-SemimodularFunctions}. 
By~\cite{BansalPendavinghVanDerPol2015-NumberMatroids}, $\log_2\log_2 m_n \leq n - \frac{3}{2}\log_2 n + \frac{1}{2}\log_2 \frac{2}{\pi} + 1 + o(1) \leq n-2$, which 
shows that our construction goes beyond the number of rays coming from (connected) matroids.
To our knowledge, the best upper bound was about $n^{2^{n+1}}$, from~\cite[Theorem 5.1]{HaimanYao:2023}. We use McMullen's Upper Bound Theorem~\cite{McMullen-MaximumNumberOfFaces} to reduce this upper bound to roughly its square root.

Our construction allows to associate each constructed ray explicitly to a sequence of combinations of the elements in the starting collection. 
Extensions of our technique might also help to give even better lower bounds closer to the upper bound by combining it with other constructions. 

In \Cref{sec:equalitysets} we show that our inductive approach is not only useful fur studying rays, but aslo higher-dimensional faces of submodular cones. Since this extension is not necessary for our main result, we have postponed it to an appendix.

\subsection*{Acknowledgments}
We thank Federico Ardila and Christian Haase for clarifying the relation between the submodular cone and toric geometry.
We also thank Yuan Yao for pointing us at his recent paper on bounds on the number of rays of the submodular cone, during FPSAC 2025 in Sapporo.
We are thankful to the MFO for the conference ``Geometric, Algebraic, and Topological Combinatorics'' where we started collaborating on the present project, in December 2023. Finally, G.P. wants to deeply thank Lionel Pournin and Jes\'us De Loera for challenging him to improve (respectively) the lower and the upper bound, in October 2023.

\section{Generalized-polymatroid sum and admissibility cone}\label{sec:AdmissCone}

This section introduces the tools for our construction. We use the language of generalized polymatroids, because even though they are equivalent to deformed permutahedra, their presentation highlights the inductive structure that we will exploit. Our main contribution in this section is the definition of admissibility cones in \cref{ssec:AdmissCones}, and their interpretation as deformation cones. Most of the results that we present prior to the introduction of admissibility cones are known facts about generalized polymatroids \cite{Murota:2003,Fujishige:2005,Frank:2011}. 
However, since they are central for our applications, we provide some proofs for completeness. 
Our approach is more polytopal than in previous presentations, and we believe it may be valuable to readers with a background in polyhedral geometry rather than optimization.

\subsection{Polytopes, fans, and deformations}

A \defn{polyhedron} $\pol$ is the intersection of finitely many (closed) affine half-spaces of $\R^d$. When a polyhedron is bounded, it is called a \defn{polytope}, and can be alternatively described as  the convex hull of finitely many points in~$\R^d$.
For $\b c\in \R^d$, its \defn{face in direction $\b c$} is $\mathdefn{\pol^{\b c}} \coloneqq \{\b x\in \pol ~;~ \inner{\b x, \b c} = \max_{\b y\in \pol}\inner{\b y, \b c}\}$, where $\inner{\cdot, \cdot}$ denotes the usual scalar product of $\R^d$.
The \defn{dimension} of a face $\polF$ of $\pol$ is the dimension of the smallest affine sub-space containing~$\polF$.
By convention, the empty set $\emptyset$ is a face of $\pol$ of dimension~$-1$.

 The \defn{face lattice} of $\pol$ is the partially ordered set $\mathdefn{\FL(\pol)}$ whose elements are the faces of $\pol$, ordered by inclusion. %
 Two polytopes are \defn{combinatorially equivalent} if their face lattices are isomorphic.

A convex cone is a set closed under positive linear combinations. A \defn{polyhedral cone} $\polC$ is the set of positive linear combinations of a finite set of vectors, or equivalently, the intersection of finitely many (closed) linear half-spaces.
We denote by $\mathdefn{\interior(\polC)}$ the interior of a cone $\polC$. The \defn{lineality} of a cone~$\polC$ is the largest linear space it contains, and is denoted~$\lineal(\polC)$.

A \defn{fan} $\c F$ is a collection of polyhedral cones in $\R^d$ closed under taking faces and intersections. %
A fan $\c G$ \defn{coarsens} a fan $\c F$, denoted $\c G \deformed\c F$, if every cone of $\c G$ is a union of cones of~$\c F$.
The \defn{common refinement} of two fans $\c F$ and $\c G$ in $\R^d$ is the fan $\mathdefn{\c F \comref \c G}$ formed by the cones $\polC\cap\polC'$ for all $\polC\in \c F$ and $\polC'\in \c G$.

For a face $\polF$ of a polytope $\pol$, its \defn{(closed) normal cone} is $\mathdefn{\c N_{\pol}(\polF)} \coloneqq \{\b c\in \R^d ~;~ \pol^{\b c} \subseteq \polF\}$.
The \defn{normal fan} of~$\pol$ is the collection of cones $\mathdefn{\c N_{\pol}} \coloneqq \bigl\{\c N_{\pol}(\polF) ~;~ \polF \text{ face of } \pol\bigr\}$.
Two polytopes $\pol,\pol'$ are \defn{normally equivalent} if they have the same normal fan, which we denote by $\pol\normeq \pol'$.
Normal equivalence implies combinatorial equivalence. %

The \defn{Minkowski sum} of $\pol$ and $\polQ$ is the polytope $\mathdefn{\pol+\polQ} \coloneqq \{\b p+\b q ~;~ \b p\in \pol,\, \b q\in \polQ\}$. Its normal fan is the common refinement $\c N_{\pol}\comref\c N_{\polQ}$, see \cite[Proposition 7.12]{Ziegler-polytopes}. A polytope $\pol$ is \defn{indecomposable} if $\pol=\polQ+\polR$ imples that $\polQ$ and $\polR$ are translated dilates of $\pol$.

We will be interested in the set of deformations of a polytope, which we now define. Polytope deformations have been studied in several contexts under different names, which explains the existence of these different characterizations (\eg in~\cite{Shephard1963,McMullen-typeCone,Meyer,Postnikov-PermutohedraAssociahedraAndBeyond}). 

\begin{definition}\label{def:deformation}
A polytope $\polQ$ is a \defn{deformation} of a polytope $\pol$, which we denote \defn{$\polQ\deformed\pol$}, if it fulfills any of the following equivalent properties:
\begin{compactenum}[(i)]
    \item $\polQ$ is a weak Minkowski summand of $\pol$, \ie there exists a real $\lambda > 0$ and a polytope~$\polR$ such that $\polQ + \polR = \lambda \pol$;
    \item the normal fan of $\polQ$ coarsens the normal fan of $\pol$, \ie $\c N_{\polQ} \deformed\c N_{\pol}$; 
    \item $\polQ$ is obtained from $\pol$ by moving the vertices so that the directions of all edges are preserved, \ie there is a (non-necessarily injective) map $\b q_i\mapsto \b p_i$ from the vertices of $\polQ$ to the vertices of $\pol$ such that for every edge $\b p_i\b p_j$ of $\pol$, we have $\b q_i-\b q_j=\lambda_{ij}(\b p_i-\b p_j)$ for some $\lambda_{ij}\ge0$.
\end{compactenum}
\end{definition}

A proof of the equivalence of these conditions can be found in \cite[Theorem 15.3]{PostnikovReinerWilliams:2008}. 

Note that if two polytopes $\polQ$ and $\polR$ are deformations of $\pol$, so are $\polQ+\polR$ and $\lambda\polQ$ for any $\lambda > 0$. 
Consequently, the set of deformations of $\pol$ forms a cone in the vector space of virtual polytopes as defined in~\cite{PukhlikovKhovanskii} (\ie it is closed under taking sums and positive dilations of polytopes). Distinct characterizations of deformations naturally give rise to distinct parameterizations of this cone.
See \Cref{fig:SC3} for an illustration of a deformation cone.

\begin{definition}\label{def:defocone}
Let $\pol\subseteq\R^d$ be a polytope whose sets of vertices, facets and edges are respectively denoted $V$, $F$, and $E$, and whose inequality description is $\pol= \{\b x\in\R^d~;~ h_{\pol[f]}(\b x)\leq b_{\pol[f]}\text{ for } {\pol[f]}\in F\}$.

\begin{compactenum}[(i)]
\item Its \defn{deformation cone} is the set of polytopes:
$$\mathdefn{\DefoCone} \coloneqq \{\polQ \subseteq\R^d~;~ \polQ\deformed\pol\}.$$
\item Its \defn{facet deformation cone} is the set $\mathdefn{\FDefoCone}$
 of ``right-hand-sides'' $\b \beta\in \R^F$ that satisfy: 
$$ \c N_{\pol({\b \beta})} \deformed\c N_{\pol}$$
where $\pol({\b \beta})\coloneqq \{\b x\in\R^d~;~ h_{\pol[f]}(\b x)\leq \beta_{\pol[f]}\text{ for } {\pol[f]}\in F$\}.
\item Its \defn{edge deformation cone} is the set $\mathdefn{\EDefoCone}$
 of edge-lengths $\b \lambda\in \R^E$ that satisfy:
$$
\lambda_{\b v_1\b v_2}(\b v_2-\b v_{1}) +\lambda_{\b v_2\b v_3}(\b v_3-\b v_2) +\cdots+\lambda_{\b v_k\b v_1}(\b v_1-\b v_k) = \b 0
$$ for any $2$-dimensional face of $\pol$ with cyclically ordered vertices $\b v_1,\dots,\b v_k$.
\end{compactenum}
\end{definition}

Note that, for any $\b t\in \R^d$, we have $\c N_{\polQ + \b t} = \c N_\polQ$ (where $\polQ + \b t$ denote the translation of $\polQ$ by the vector $\b t$).
Thus, for any $\polQ\in \DefoCone$, we have $\{\polQ + \b t ~;~\b t\in \R^d\} \subseteq\DefoCone$.
Indeed, for $\pol\subseteq\R^d$ the cone $\DefoCone$ has a lineality space of dimension $d$ (which might be $\geq \dim \pol$) induced by the space of translations. Similarly, if $\beta_{\pol[f]}\in \FDefoCone$, then $\beta_{\pol[f]} + h_{\pol[f]}(\b t)\in \FDefoCone$ for any $\b t\in \R^d$.
The cone $\FDefoCone$ has a lineality space of dimension $d$ induced by translations too.

As it can be seen for example in \cite[Thm. 15.5]{PostnikovReinerWilliams:2008}, modulo their lineality, the cones $\DefoCone$, $\EDefoCone$ and $\FDefoCone$ are linearly isomorphic. To avoid confusion, we will mainly work with the presentation~$\DefoCone$ of the cone of deformations. Nevertheless,  the other parameterizations will implicitly appear in some definitions and proofs.

\begin{lemma}
For any polytope $\pol\subseteq\R^d$, we have:
$$\DefoCone/\lineal(\DefoCone)~\simeq~\EDefoCone ~\simeq~\FDefoCone/\lineal(\FDefoCone)$$
where $\lineal(\DefoCone) \simeq \lineal(\FDefoCone) \simeq \R^d$.
\end{lemma}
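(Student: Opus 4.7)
The plan is to construct explicit linear maps between the three parameterizations and identify their kernels with a copy of $\R^d$ consisting of translations.

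First, I would define $\phi_E \colon \DefoCone \to \EDefoCone$ by sending $\polQ \in \DefoCone$ to the edge-length tuple $(\lambda_{ij})_{ij \in E}$ supplied by characterization~(iii) of \Cref{def:deformation}. Well-definedness reduces to checking the closing relation on each 2-face of $\pol$: for a 2-face with cyclically ordered vertices $\b v_1, \ldots, \b v_k$, the corresponding images $\b q_{v_1}, \ldots, \b q_{v_k}$ in $\polQ$ form a closed (possibly degenerate) convex polygon, so the vectors $\b q_{v_{i+1}}-\b q_{v_i} = \lambda_{i,i+1}(\b v_{i+1}-\b v_i)$ sum to zero around the cycle. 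Linearity is immediate from the realization of Minkowski sum and positive dilation as vertex translation and rescaling. Dually, $\phi_F \colon \FDefoCone \to \DefoCone$, $\b\beta \mapsto \pol(\b\beta)$, is linear directly from its definition.

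Next, I would show both lineality subspaces are canonically isomorphic to $\R^d$. Every translation $\b t \in \R^d$ produces the trivial polytope $\{\b t\} \in \DefoCone$ (with zero image under $\phi_E$) and the right-hand-side $(h_{\pol[f]}(\b t))_{f \in F} \in \FDefoCone$ (for which $\pol(\b\beta) = \pol + \b t$). The induced embeddings $\R^d \hookrightarrow \lineal(\DefoCone)$ and $\R^d \hookrightarrow \lineal(\FDefoCone)$ are both injective, the latter because the outer facet normals of a full-dimensional polytope span $\R^d$. Conversely, if $\phi_E(\polQ) = \b 0$ then every edge of $\polQ$ has length zero so $\polQ$ is a single point; and the kernel of $\phi_F$ consists of those $\b\beta$ for which $\pol(\b\beta)$ collapses to a point, which is of the translation form above. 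This yields $\lineal(\DefoCone) \simeq \lineal(\FDefoCone) \simeq \R^d$.

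Finally, I would establish surjectivity of $\phi_E$, whence surjectivity of $\phi_E \circ \phi_F$ will follow. Given $\b\lambda \in \EDefoCone$, fix a base vertex $\b v_0$ of $\pol$, set $\b q_{v_0} \coloneqq \b 0$, and propagate $\b q_v \coloneqq \sum_{(i,j)\in P}\lambda_{ij}(\b v_j - \b v_i)$ along any edge-path $P$ from $\b v_0$ to $v$. Path-independence follows from the closing conditions together with the fact that the cycle space of the edge graph of $\pol$ is generated by boundaries of 2-faces, a consequence of the simple-connectedness of the 2-skeleton of a polytope. The convex hull $\conv\{\b q_v\}$ is then a deformation of $\pol$ realizing $\b\lambda$, and every deformation $\polQ$ admits a preimage under $\phi_F$ given by $\beta_{\pol[f]} = \max_{\polQ}h_{\pol[f]}$.

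The main obstacle is the surjectivity step: one must argue that the vertex data obtained by path integration assembles into a genuine polytope that is a deformation of $\pol$, rather than a mere point configuration. The nontrivial ingredient is the polytopal fact that cycles in the edge graph decompose as sums of 2-face boundaries, allowing the local closing relations on 2-faces to propagate to global consistency; this is precisely the content of the argument behind \cite[Theorem 15.3]{PostnikovReinerWilliams:2008}.
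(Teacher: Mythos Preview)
The paper does not actually give its own proof of this lemma; it simply records the statement and defers to \cite[Thm.~15.5]{PostnikovReinerWilliams:2008} in the sentence immediately preceding it. Your sketch is precisely the standard argument underlying that reference (and you acknowledge as much at the end), so at the level of strategy there is nothing to compare.

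A couple of minor imprecisions worth flagging. First, the linearity of $\phi_F\colon\b\beta\mapsto\pol(\b\beta)$ is not ``direct from the definition'': the identity $\pol(\b\beta_1+\b\beta_2)=\pol(\b\beta_1)+\pol(\b\beta_2)$ genuinely uses that each $\b\beta_i\in\FDefoCone$, so that the normal fans of the summands coarsen $\c N_{\pol}$; only then does facet-wise addition of right-hand sides agree with Minkowski sum. Second, your injectivity argument for $\R^d\hookrightarrow\lineal(\FDefoCone)$ invokes that the facet normals span $\R^d$, which presumes $\pol$ is full-dimensional; the paper is equally informal on this point, so this is not a defect relative to the source. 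Your identification of the real obstacle---that the path-integrated data $\{\b q_v\}$ assembles into a bona fide deformation, which in turn rests on the $2$-face boundaries generating the cycle space of the edge graph---is exactly where the content lies.
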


Note that, from these equivalent parameterizations, it is clear that $\DefoCone$ is a polyhedral cone. 
Each of its \emph{faces} is the deformation cone $\DefoCone[\polQ]$ of a deformation $\polQ\deformed\pol$, and the interior of this face contains all the polytopes normally equivalent to~$\polQ$ (in particular, $\interior(\DefoCone)$ contains all polytopes normally equivalent to $\pol$).
With all this in mind, for a polytope $\pol\subseteq \R^d$, we will slightly abuse notation and say that a polytope $\polQ$ ``is a face'' of $\DefoCone$, meaning that $\DefoCone[\polQ]$ is a face of $\DefoCone$; similarly, we say that $\polQ$ ``is a ray'' of $\DefoCone$ if moreover $\dim\DefoCone[\polQ] - d = 1$. The rays of $\DefoCone$ are in correspondence with the indecomposable deformations of $\pol$.

\subsection{Deformed permutahedra and generalized polymatroids}

The \defn{$n$-permutahedron $\permuto$}$\subseteq\R^n$ is the convex hull of the $n!$ points $\bigl(\sigma(1),\dots,\sigma(n)\bigr)\in \R^n$ ranging over all the permutations $\sigma\in \c S_n$. Its normal fan is the \defn{braid fan} whose maximal cones are $\pol[C]_\sigma \coloneqq \{\b c \in \R^n ~;~ c_{\sigma(1)} \leq c_{\sigma(2)} \leq \dots \leq c_{\sigma(n)}\}$ for $\sigma\in \c S_n$, see \Cref{fig:VariousGPsums} (Top left). 

\begin{definition}\label{def:deformedpermutahedra}
A polytope $\pol$ is a \defn{deformed $n$-permutahedron} 
if it satisfies any of the following equivalent conditions:
\begin{compactenum}[(i)]
   \item $\pol\deformed\permuto$;
    \item\label{it:dpsumodular}  
    $\pol = \{\b x \in \R^d~;~\forall S\subsetneq [n],\,\sum_{i \in S}x_i \leq f(S) ~;~ \sum_{i \in [n]}x_i = f([n]) \}$
   for a \defn{submodular} set function $f: 2^{[n]}\to \R$, \ie for all $A, B\subseteq [n]$, we have $f(A) + f(B) \,\geq\, f(A \cup B) + f(A \cap B)$; or
   \item\label{it:dpedges}  all its edges are\footnote{By ``being in direction of $\b z$'', we mean that if $[\b u, \b v]$ is an edge of $\pol$, then there exists $\lambda\in \R^*$ with $\b v - \b u = \lambda\b z$.} in direction $\b e_i - \b e_j$ for some $i, j\in [n]$.
\end{compactenum}
\end{definition}
For details on the edge and facet presentations given here, see for example \cite[Section 3]{Fujishige:2005} or \cite{Postnikov-PermutohedraAssociahedraAndBeyond}. 
The deformation cone of the permutahedron $\mathdefn{\SC} \coloneqq \DefoCone[\permuto]$ is called the \defn{submodular cone} because the facet-deformation cone is in bijection with the cone of submodular set functions, by \cref{def:deformedpermutahedra}\eqref{it:dpsumodular}, see \Cref{fig:SC3} for an illustration of $\SC[3]$.

The normal fan $\c N_{\pol}$ of a deformed $n$-permutahedron $\pol$ is a coarsening of the braid fan, and therefore its maximal cones are unions of cones $\pol[C]_\sigma$ for certain permutations $\sigma\in \c S_n$.
This implies that for $\sigma\in \c S_n$ and any $\b c\in \interior(\pol[C]_\sigma)$, the face $\pol^{\b c}$ is a vertex of $\pol$ which does not depend on the actual choice of $\b c\in \interior(\pol[C]_\sigma)$.
Hence, we denote by \defn{$\pol^\sigma$} the vertex $\pol^{\b c}$ for any $\b c\in \interior(\pol[C]_{\sigma})$.

As per usual, we denote the $i^{\text{th}}$ coordinate of a vector $\b x\in \R^n$ (\ie $\inner{\b x, \b e_i}$) by \defn{$x_i$}.
We write~$\pol^\sigma_i$ for the $i^{\text{th}}$ coordinate of the vertex $\pol^\sigma$.
Note that, for a deformed $n$-permutahedron $\pol$, the quantity $\sum_{i=1}^n x_i$ is constant over $\b x\in \pol$, \ie $\pol$ lies in a hyperplane orthogonal to $\mathdefn{\b 1} \coloneqq (1, \dots, 1)$.

\begin{definition}
For a deformed $n$-permutahedron $\pol\in \SC$, the faces $\pol^{-\b e_n}$ and  $\pol^{+\b e_n}$ are respectively called the \defn{top} and \defn{bottom faces}\footnote{We choose this notation for consistency with the upcoming \cref{def:topbotGP}.}.
\end{definition}

The importance of this pair of antipodal faces is highlighted by the following result:

\begin{lemma}\label{lem:topbotgenperm}
A deformed $n$-permutahedron $\pol\in \SC$ is uniquely determined (among deformed $n$-permutahedra) by the pair of deformed permutahedra $(\pol^{+\b e_n},\, \pol^{-\b e_n})$.
\end{lemma}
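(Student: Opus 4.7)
The plan is to invoke the characterization of deformed permutahedra by submodular functions from \Cref{def:deformedpermutahedra}: such a polytope is uniquely encoded by its submodular function $f \colon 2^{[n]} \to \R$ (normalized so that $f(\emptyset) = 0$). It therefore suffices to reconstruct $f$ from the pair of faces $(\pol^{+\b e_n},\, \pol^{-\b e_n})$. The core idea is that each face encodes one ``half'' of $f$: the bottom face captures the restriction of $f$ to $2^{[n-1]}$, while the top face captures the contraction of $f$ by $\{n\}$, and together they cover all of $2^{[n]}$.

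First, I would analyse the bottom face. Combining the defining inequality $\sum_{i \in [n-1]} x_i \leq f([n-1])$ with the equality $\sum_{i=1}^n x_i = f([n])$ yields $x_n \geq f([n]) - f([n-1])$, with equality precisely on $\pol^{-\b e_n}$. Hence this face lies in the hyperplane $\{x_n = f([n]) - f([n-1])\}$, and dropping the last coordinate realises it as the deformed $(n-1)$-permutahedron associated with $f|_{2^{[n-1]}}$; submodularity makes all the inequalities indexed by subsets containing $n$ redundant once $x_n$ has been fixed.

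Symmetrically, for the top face, the greedy-algorithm formula for vertices of $\pol$ gives $\max_{\b x \in \pol} x_n = f(\{n\})$, so $\pol^{+\b e_n}$ lies in $\{x_n = f(\{n\})\}$ and projects to the deformed $(n-1)$-permutahedron associated with the contraction $g \colon T \mapsto f(T \cup \{n\}) - f(\{n\})$ on $T \subseteq [n-1]$. Again, submodularity (via the inequality $f(T \cup \{n\}) - f(\{n\}) \leq f(T)$) renders the constraints indexed by subsets of $[n-1]$ redundant on this face.

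With these two pieces in hand, the reconstruction of $f$ is immediate: the bottom face determines $f(S)$ for every $S \subseteq [n-1]$; the top face determines both the scalar $f(\{n\})$ (its affine position) and the function $g$, whence $f(S) = g(S \setminus \{n\}) + f(\{n\})$ for every $S \ni n$. Combined with $f(\emptyset) = 0$, this exhausts $2^{[n]}$ and so recovers $\pol$. The main obstacle is the identification of each face as a genuine deformed $(n-1)$-permutahedron with the claimed submodular data: this is precisely where submodularity is used to eliminate the redundant inequalities, after which the rest is bookkeeping.
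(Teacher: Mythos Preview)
Your proof is correct and follows essentially the same strategy as the paper: both recover the submodular function $f$ by splitting $2^{[n]}$ according to whether or not the subset contains $n$, reading off the values for $S\not\ni n$ from $\pol^{-\b e_n}$ and for $S\ni n$ from $\pol^{+\b e_n}$. The paper phrases this via the greedy vertex formula $f(X)=\langle\pol^\sigma,\b e_X\rangle$ with a suitably chosen $\sigma$ landing in the desired face, whereas you phrase it via restriction and contraction of $f$ and the explicit redundancy of the remaining inequalities; these are two packagings of the same decomposition. One cosmetic point: in the paper's convention the face $\pol^{-\b e_n}$ is called the \emph{top} face and $\pol^{+\b e_n}$ the \emph{bottom} face (the opposite of your naming), so you may want to align the terminology.
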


\begin{proof}
Let $\pol$ be a deformed $n$-permutahedron with bottom face $\polR \coloneqq \pol^{+\b e_n}$ and top face $\polQ \coloneqq \pol^{-\b e_n}$.
As edges of $\polR$ and $\polQ$ are edges of $\pol$, \Cref{def:deformedpermutahedra}\eqref{it:dpedges} implies that $\polR$ and $\polQ$ are deformed permutahedra.
We denote $\b e_X = \sum_{i\in X} \b e_i$ for $X\subseteq [n]$.

By \cref{def:deformedpermutahedra} we have $\pol = \{\b x\in \R^n ~;~ \forall X\subseteq [n],\, \inner{\b x, \b e_X} \leq f(X), \inner{\b x, \b e_{[n]}} = f([n])\}$ for a certain submodular function $f : 2^{[n]} \to \R$. We will prove that $f$ is entirely determined by $\polQ$ and $\polR$.
Note that, by definition, $f(X) = \max_{\sigma\in \c S_n} \inner{\pol^\sigma, \b e_X}$.
Moreover, for any $\tau\in \c S_n$, if $\tau([n-|X|+1,\, n]) = X$, then we have $\b e_X\in \polC_\tau$, and consequently $f(X) = \inner{\pol^\tau, \b e_X}$.

Consider $X$ such that $n\in X$, and let $\sigma\in \c S_n$ be such that $\sigma(n) = n$, and $\sigma([n-|X|+1,\, n]) = X$.
We have $\pol^\sigma\in \pol^{+\b e_n} = \polR$, which implies $f(X) = \inner{\pol^\sigma, \b e_X} = \inner{\polR^\sigma, \b e_X}$.
Conversely, if $n\notin X$, then, taking $\sigma\in \c S_n$ such that $\sigma(1) = n$ and $\sigma([n-|X|+1,\, n]) = X$, we get $\pol^\sigma \in \pol^{-\b e_n} = \polQ$ and consequently $f(X) = \inner{\pol^\sigma, \b e_X} = \inner{\polQ^\sigma, \b e_X}$.
Hence, $f$ is entirely determined by $\polR$ and $\polQ$.
\end{proof}

As we precise in \cref{thm:bijections}, this leads to a bijection between deformed $(n+1)$-permutahedra and ``compatible'' pairs of deformed $n$-permutahedra. This correspondence is best illustrated via so-called generalized $n$-polymatroids. 
Even though they are in bijection with deformed $(n+1)$-permutahedra (\cref{prop:LiftedGPsumProperties}), their presentation in~$\R^n$ (instead of $\R^{n+1}$) emphasizes the above interpretation, and they can be thought as an intermediate step in the construction of one deformed $(n+1)$-permutahedron out of two deformed $n$-permutahedra.

We start defining generalized polymatroids as a generalization of \cref{def:deformedpermutahedra}.

\begin{definition}\label{def:genpolymatroid}
A polytope $\polR\in \R^n$ is a \defn{generalized $n$-polymatroid}
if it satisfies any of the following equivalent conditions:
\begin{compactenum}[(i)]  
    \item $\pol \deformed (\permuto + [0, 1]^n)$;
    \item $\pol=\{\b x\in \R^d ~;~\forall S\subsetneq [n],\, b(S)\leq \sum_{i\in S} x_i\leq p(S)\}$
  for set functions $p,b: 2^{[n]}\to \R$ with $b$ supermodular (\ie $-b$ submodular), $p$ submodular , $p(\emptyset) = b(\emptyset) = 0$ and such that 
  $$p(S) - b(T ) \ge p(S\setminus T ) - b(T \setminus S)$$ 
  holds for all $S, T \subseteq [n]$; or
   \item all its edges are in direction $\b e_i - \b e_j$ or $\b e_i$ for some $i, j\in [n]$ with $i\ne j$.
\end{compactenum}
\end{definition}
The equivalence of these  presentations can be found for example in \cite[Theorem~3]{FrankKiralyPapPritchard}.
We now explain the equivalence between generalized $n$-polymatroids and deformed $(n+1)$-permutahedra.

\begin{definition}\label{def:LiftGPsum}
The \defn{lifting map} is the map $\mathdefn{\liftmap} :\R^n\to \R^{n+1}$ given by $\b x\mapsto \bigl(\b x,\, -\sum_{i=1}^n x_i\bigr)$, and the \defn{projection map} is the map $\mathdefn{\projmap} :\R^{n+1}\to \R^{n}$ given by $(x_1, \dots, x_n, x_{n+1}) \mapsto (x_1, \dots, x_n)$.
If $\pol\subset\R^n$ and $\polR\subset\R^{n+1}$ are polytopes, then the \defn{lift} of $\pol$ is $\mathdefn{\lift\pol} \coloneqq \liftmap(\pol)\subset \R^{n+1}$, and the \defn{projection} of $\polR$ is  $\mathdefn{\proj\polR} \coloneqq \projmap(\pol)\subset \R^{n}$.
\end{definition}

\begin{proposition}\label{prop:LiftedGPsumProperties}
For a generalized $n$-polymatroid $\pol[S]$, its lift $\lift{\!\pol[S]}$ is a deformed $(n+1)$-permutahedron.
Conversely, if $\polR$ is a deformed $(n+1)$-permutahedron, then its projection $\proj{\!\polR}$ is a generalized $n$-polymatroid.
One has $\proj{\!(\lift{\pol[S]})}=\pol[S]$, and $\lift{\!(\proj{\polR})} = \polR - \left(\sum_{i=1}^{n+1} x_i\right)\b e_{n+1}$ for any $\b x\in \polR$.
\end{proposition}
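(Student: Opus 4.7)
The plan is to argue by the edge-direction characterization, namely \cref{def:deformedpermutahedra}\eqref{it:dpedges} and \cref{def:genpolymatroid}(iii), and exploit that both $\liftmap$ and $\projmap$ are affine maps that become bijective when suitably restricted. Concretely, $\liftmap$ is an affine injection $\R^n \hookrightarrow \R^{n+1}$ whose image is the hyperplane $H_0 := \{\b x \in \R^{n+1} : \sum_{i=1}^{n+1} x_i = 0\}$, and $\projmap$ is a left-inverse: $\projmap \circ \liftmap = \mathrm{id}_{\R^n}$. Moreover, since $\permuto[n+1]$ lies in the hyperplane $\{\sum x_i = \binom{n+2}{2}\}$, any deformation of it (being a weak Minkowski summand of a dilate) lies in some hyperplane $H_c := \{\b x \in \R^{n+1} : \sum_{i=1}^{n+1} x_i = c\}$, and the restriction of $\projmap$ to any such $H_c$ is an affine bijection onto $\R^n$. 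This will be the key observation for everything that follows.

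For the first statement, I would take a generalized $n$-polymatroid $\pol[S]$ and observe that $\lift{\pol[S]} = \liftmap(\pol[S])$ is a polytope of the same combinatorial type as $\pol[S]$, whose edges are images under $\liftmap$ of edges of $\pol[S]$. Applying $\liftmap$ to the admissible edge directions of \cref{def:genpolymatroid}(iii) gives $\liftmap(\b e_i - \b e_j) = \b e_i - \b e_j$ for $i, j \in [n]$, and $\liftmap(\b e_i) = \b e_i - \b e_{n+1}$ for $i \in [n]$. In either case, the resulting direction is $\b e_k - \b e_\ell$ for some $k \neq \ell$ in $[n+1]$, so by \cref{def:deformedpermutahedra}\eqref{it:dpedges} the polytope $\lift{\pol[S]}$ is a deformed $(n+1)$-permutahedron.

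For the converse, let $\polR$ be a deformed $(n+1)$-permutahedron, which by the above lies in some $H_c$. Then $\projmap$ restricted to $H_c$ is an affine bijection onto $\R^n$, so $\proj{\polR}$ is a polytope whose edges are in bijection with those of $\polR$. An edge direction $\b e_k - \b e_\ell$ with $k, \ell \in [n+1]$ projects to $\b e_k - \b e_\ell$ when $k, \ell \in [n]$, and to $\b e_k$ (resp.\ $-\b e_\ell$, which is in the direction of $\b e_\ell$) when $\ell = n+1$ (resp.\ $k = n+1$). All these are admissible directions in \cref{def:genpolymatroid}(iii), so $\proj{\polR}$ is a generalized $n$-polymatroid.

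The two identities at the end are then essentially computational. The equality $\proj{(\lift{\pol[S]})} = \pol[S]$ is immediate from $\projmap \circ \liftmap = \mathrm{id}_{\R^n}$. For the other, fix any $\b x \in \polR$ and let $c := \sum_{i=1}^{n+1} x_i$; then $c$ is the common value of $\sum_i y_i$ over $\b y \in \polR$. For any such $\b y$,
\[
\liftmap(\projmap(\b y)) = \left(y_1, \dots, y_n, -\sum_{i=1}^n y_i\right) = (y_1, \dots, y_n, y_{n+1} - c) = \b y - c\, \b e_{n+1},
\]
which gives $\lift{(\proj{\polR})} = \polR - c\, \b e_{n+1}$ as claimed. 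The only nontrivial point in the whole argument is verifying that every deformed $(n+1)$-permutahedron lies in some $H_c$; this is where I expect the reader could stumble, but it follows cleanly from the definition of weak Minkowski summand applied to $\permuto[n+1]$, which itself lies in such a hyperplane.
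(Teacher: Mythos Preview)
Your proof is correct and follows essentially the same approach as the paper: both argue via the edge-direction characterizations and compute how $\liftmap$ and $\projmap$ act on the admissible directions. The only minor difference is that for the projection direction you first observe $\polR\subset H_c$ and use that $\projmap|_{H_c}$ is an affine bijection (so edges map bijectively to edges), whereas the paper invokes the general fact that edge directions of a projection are among the projections of the original edge directions; both are equally valid and lead to the same computation.
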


\begin{proof}
This is basically 
\cite[Theorem 14.2.2 and 14.2.5]{Frank:2011}, we give here a self-contained proof.

If $\polR$ is a deformed $(n+1)$-permutahedron, then all its edges are in direction $\b e_i-\b e_j$ for some $i, j\in [n+1]$ with $i\ne j$.
As the edges of $\proj{\polR}$ are among the projections of the edges of $\polR$, we get that the edges of $\proj{\polR}$ are in directions $\projmap(\b e_i - \b e_{n+1}) = \b e_i$ for some $i\in [n]$, or in direction $\projmap(\b e_i - \b e_j) = \b e_i - \b e_j$ for some $i,j\in [n]$ with $i\ne j$.
Hence, $\proj{\polR}$ is a generalized $n$-polymatroid.

Reciprocally, an edge of a generalized $n$-polymatroid $\polS$ is either in direction $\b e_i$ for some $i\in [n]$, and we have $\liftmap(\b e_i) = \b e_i - \b e_{n+1}$; or it is in direction $\b e_i - \b e_j$ for some $i, j\in [n]$ with $i\ne j$, and we have $\liftmap(\b e_i - \b e_j) = \b e_i - \b e_j$.
Hence, $\lift{\polS}$ is a deformed $(n+1)$-permutahedron.

The equalities $\proj{(\lift{\pol[S]})}=\pol[S]$, and that $\lift{(\proj{\polR})} = \polR - \left(\sum_{i=1}^{n+1} x_i\right)\b e_{n+1}$ for any $\b x\in \R$, are immediate to check (recall that $\sum_{i=1}^{n+1} x_i$ is independent of the choice of $\b x\in \polR$).
\end{proof}

\subsection{Quotients and GP-sums}
We now proceed to formalize a converse statement to \cref{lem:topbotgenperm}, that is to understand which pairs of deformed $n$-permutahedra arise as top and bottom faces of a generalized $(n+1)$-permutahedron, with the help of the language of generalized polymatroids.

\begin{definition}\label{def:topbotGP}
The \defn{top face} of a generalized $n$-polymatroid $\polR$ is the polytope $\polR^{+\b 1}$, while its \defn{bottom face} is the polytope $\polR^{-\b 1}$.
\end{definition}

The property characterizing the pairs appearing as top/bottom faces is given by the following notion of \defn{forming a quotient}.
The name ``quotient'' is motivated by the fact that the relation encodes quotients of linear spaces arising from linear maps in the case of realizable (poly)matroids.

\begin{definition}
Two deformed $n$-permutahedra $\pol$ and $\polQ$ \defn{form a quotient}, denoted \defn{$\quotientQP$}, if for all permutations $\sigma\in \c S_n$ and all indices $i\in [n]$, we have $\polQ^\sigma_i \leq \pol^\sigma_i$. %
If all the inequalities defining the quotient are strict, \ie $\polQ^\sigma_i < \pol^\sigma_i$ for all $i\in [n]$ and $\sigma \in \c S_n$, then we say that the quotient is \defn{generic}, and we denote it by \defn{$\genquotientQP$}.
\end{definition}

\begin{remark}\label{rmk:GenericGPsum}
	For $\pol, \polQ\in \SC$, there is a translation $\polQ'$ of $\polQ$ such that $\polQ'$ and $\pol$ forms a generic quotient, \ie $\genquotientQP[\pol][\polQ']$. Indeed, if $\polQ' = \polQ + \b t$ for $t_i = \min_{\sigma\in \c S_n}(\pol^\sigma_i - \polQ^\sigma_i) - 1$, then it is straightforward to see that $\polQ'$ is a generic quotient of $\pol$.
\end{remark}

The \defn{GP-sum} is the operation that allows us to construct generalized $n$-polymatroids from pairs of deformed $n$-permutahedra forming a quotient.

\begin{definition}\label{def:GPsum}
Given two polytopes $\pol, \polQ$ in $\R^d$, 
their \defn{GP-sum} (standing for ``generalized polymatroid sum'') is the polytope:
$$\mathdefn{\GPsum} \coloneqq (\polQ + \R_+^d) \,\cap\, (\pol + \R_-^d),$$
see \Cref{fig:GPsum} for an example.
\end{definition}

\begin{figure}
\centering
\includegraphics[width=0.99\linewidth]{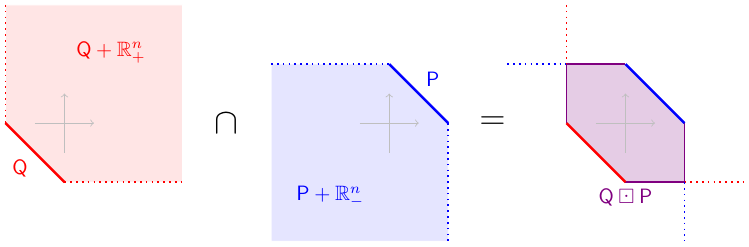}
\caption[Construction of the GP-sum $\GPsum$]{The construction of $\GPsum$: for $\pol$ and $\polQ$ two segments, the corresponding $\textcolor{blue}{\pol+\R_-^n}$ and $\textcolor{red}{\polQ+\R_+^n}$ are unbounded polyhedra.
Computing their intersection yields the GP-sum $\textcolor{violet}{\GPsum}$, whose top and bottom faces are $\pol$ and $\polQ$ respectively.
Note that $\GPsum$ is a polytope, and that the combinatorics of $\GPsum$ heavily depend on the exact coordinates of $\pol$ and $\polQ$, not on their sole combinatorics.}
\label{fig:GPsum}
\end{figure}

\begin{remark}
Even if it is described as the intersection of two unbounded polyhedra, note that the GP-sum $\GPsum$ of two polytopes $\pol$ and $\polQ$ is always a bounded polytope, because the $i$\textsuperscript{th} coordinate of its points is bounded between the minimum and maximum of the $i$\textsuperscript{th} coordinates of points in $\polQ$ and $\pol$, respectively, \ie we have: $\GPsum \subseteq \prod_{i=1}^n [\min_{\b y\in \polQ} y_i,\, \max_{\b x\in\pol} x_i]$.

\end{remark}

\begin{lemma}\label{lem:polymatroidtopbotquotient}
If $\polR$ is a generalized $n$-polymatroid, then $\polR^{-\b 1}$ and $\polR^{+\b 1}$ are deformed $n$-permutahedra forming a quotient.
\end{lemma}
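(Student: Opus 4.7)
The plan is to split the statement into two pieces. First, I will verify that the top and bottom faces are themselves deformed permutahedra by examining edge directions. Second, I will establish the coordinatewise quotient inequality through a parametric linear optimization that interpolates between the two vertices being compared.

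For the first piece: the face $\polR^{+\b 1}$ lies inside a hyperplane orthogonal to $\b 1$, so every edge direction $\b d$ of $\polR^{+\b 1}$ satisfies $\inner{\b d, \b 1} = 0$. Since edges of the face are also edges of $\polR$, \cref{def:genpolymatroid}(iii) restricts them to $\b e_i - \b e_j$ or $\b e_i$. The latter violates $\inner{\b d, \b 1} = 0$, so only $\b e_i - \b e_j$ directions remain, and by \cref{def:deformedpermutahedra}(iii) this makes $\polR^{+\b 1}$ a deformed $n$-permutahedron. The argument is identical for $\polR^{-\b 1}$.

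For the quotient property, I will fix $\sigma\in\c S_n$ and any $\b c_\sigma\in\interior(\polC_\sigma)$, and consider the one-parameter family $\phi_t \coloneqq t\b 1 + \b c_\sigma$ for $t\in\R$. The idea is that the map $t\mapsto \b x(t)\coloneqq\polR^{\phi_t}$ traces out a vertex path whose endpoints recover the two vertices to be compared: as $t\to+\infty$ the term $t\b 1$ dominates and the vertex lands on $\polR^{+\b 1}$ with $\b c_\sigma$ as secondary objective, yielding $\b x(+\infty)=(\polR^{+\b 1})^\sigma$, and symmetrically $\b x(-\infty)=(\polR^{-\b 1})^\sigma$. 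At each of the finitely many transition values $t_0$ where the vertex jumps, the edge direction $\b d$ connecting the two vertices on either side satisfies $t_0\inner{\b d,\b 1}+\inner{\b d,\b c_\sigma}=0$. The direction $\b d = \b e_i - \b e_j$ is impossible since $\b c_\sigma$ has strictly ordered coordinates in $\interior(\polC_\sigma)$, forcing $\b d=\pm\b e_i$. Since $\inner{\b x(t),\b 1}$ is non-decreasing in $t$ (a standard consequence of $\b x(t)$ being a maximizer), each transition as $t$ decreases moves strictly in the direction $-\b e_i$ for some $i$, decreasing only the $i^{\text{th}}$ coordinate. Telescoping these moves from $t=+\infty$ down to $t=-\infty$ yields $(\polR^{-\b 1})^\sigma_i \le (\polR^{+\b 1})^\sigma_i$ for every $i$, which is the quotient condition.

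The main obstacle I anticipate is verifying that the parametric family behaves cleanly at transitions, specifically that the edges on the maximizing face at each $t_0$ all lie along coordinate axes (never along $\b e_i - \b e_j$ directions). This is where the interior condition on $\b c_\sigma$ does real work: distinct coordinates of $\b c_\sigma$ automatically rule out the problematic $\b e_i - \b e_j$ directions, so no further genericity is needed. If several axis directions share a transition value, the corresponding maximizing face is higher-dimensional, but since its edges are all of the form $\pm\b e_i$ the telescoping still holds coordinate by coordinate, without disturbing the argument.
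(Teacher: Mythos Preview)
Your proposal is correct and follows essentially the same approach as the paper: both arguments rule out $\b e_i$ edges on the extremal faces by orthogonality to $\b 1$, and then establish the quotient inequality via the parametric family $\b c + t\b 1$ whose optimal vertex traces a monotone edge path from $(\polR^{-\b 1})^\sigma$ to $(\polR^{+\b 1})^\sigma$ along coordinate directions. Your justification that transition edges cannot be $\b e_i - \b e_j$ (via orthogonality to $\phi_{t_0}$ and the distinct coordinates of $\b c_\sigma$) is slightly more explicit than the paper's, which simply invokes that the path is strictly increasing in the $\b 1$ direction; but the substance is the same.
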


\begin{proof}
Let $\polR$ be a generalized $n$-polymatroid, with $\pol \coloneqq \polR^{+\b 1}$ and $\polQ \coloneqq\polR^{-\b 1}$.
As $\pol$ maximizes $\b 1$, the polytope $\pol$ has no edges in direction $\b e_i$, hence, as the direction of the edges of $\polR$ are $\b e_i - \b e_j$ for some $i\ne j$ and $\b e_i$ for some $i\in [n]$, the polytope $\pol$ only has edges in direction $\b e_i - \b e_j$ for some $i\ne j$.
Thus, $\pol$ is a deformed $n$-permutahedron.
The same holds for $\polQ$.

To prove that they form a quotient, consider a generic vector $\b c\in\polC_\sigma$, which by definition lies in the normal cones of the vertices $\pol^\sigma$ and $\polQ^\sigma$.
Now, for $\lambda\in\R$, consider the vector $\b c_{\lambda} = \b c + \lambda \b 1$.
For $\lambda\to -\infty$ small enough, $\polR^{\b c_{\lambda}} = \polQ^{\sigma}$, while for $\lambda\to +\infty$ big enough $\polR^{\b c_{\lambda}} = \pol^{\sigma}$.
As $\lambda$ goes from $-\infty$ to $+\infty$, the vertices $\polR^{\b c_{\lambda}}$ describe a monotone edge path from $\polQ^{\sigma}$ to $\pol^{\sigma}$ (such paths are called \emph{parametric simplex paths} in linear programming).
The edges in this path are strictly increasing in the direction $\b 1$. Hence, they must be in direction $\b e_j$ for some $j$, thus increasing the $j^{\text{th}}$ coordinate.
This shows that  $\polQ^\sigma_i \leq \pol^\sigma_i$ for all $\sigma\in \c S_n$ and $i\in [n]$, that is, that $\pol$ and $\polQ$ form a quotient.
\end{proof}

Furthermore, the following auxiliary statement can be found, e.g., in \cite[Theorem~6.1]{BrandenburgLohoSmith2024-QuotientsMconvexSets}. For brevity, we omit an alternative proof and instead refer to their 
proof. 

\begin{lemma}\label{lem:GP-sumpolymatroid}
If $\pol$ and $\polQ$ are deformed $n$-permutahedra forming a quotient, then their GP-sum $\GPsum$ is a generalized polymatroid.
\end{lemma}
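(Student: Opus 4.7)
The plan is to verify characterization (iii) of \Cref{def:genpolymatroid}, showing that every edge of $\GPsum$ is parallel to $\b e_i$ or $\b e_i-\b e_j$ for some $i\ne j$. The approach is parametric: I would parameterize the vertices of $\GPsum$ by the maximal cones of the common refinement of the braid fan and the coordinate-hyperplane arrangement -- equivalently, the normal fan of $\permuto+[0,1]^n$ -- and read off edge directions from the adjacency of these cones. The maximal cones are labeled by pairs $(\sigma,k)\in\c S_n\times\{0,1,\ldots,n\}$, with
\[ \pol[C]_{\sigma,k}=\bigl\{\b c\in\R^n~;~c_{\sigma(1)}<\cdots<c_{\sigma(k)}<0<c_{\sigma(k+1)}<\cdots<c_{\sigma(n)}\bigr\}. \]

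For each such pair I would define a candidate vertex $\b v^{\sigma,k}$ by
\[ \b v^{\sigma,k}_{\sigma(i)}=\begin{cases}\polQ^\sigma_{\sigma(i)}&\text{if }i\le k,\\ \pol^\sigma_{\sigma(i)}&\text{if }i>k.\end{cases} \]
The quotient hypothesis $\polQ^\sigma_j\le \pol^\sigma_j$ immediately gives $\polQ^\sigma\le \b v^{\sigma,k}\le \pol^\sigma$ coordinate-wise, so $\b v^{\sigma,k}\in(\polQ+\R_+^n)\cap(\pol+\R_-^n)=\GPsum$. I would then show that $\b v^{\sigma,k}$ maximizes $\langle\cdot,\b c\rangle$ over $\GPsum$ for every $\b c\in\pol[C]_{\sigma,k}$: any $\b x\in\GPsum$ admits $\b p\in\pol$ and $\b q\in\polQ$ with $\b q\le \b x\le \b p$, which yields $\langle \b x,\b c\rangle\le\sum_{c_i>0}c_i p_i+\sum_{c_i<0}c_i q_i$. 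Using that $\sum_j p_j$ is constant on $\pol$, shifting the coefficient vector of the first partial sum by a suitable multiple of $\b 1$ places it in the closure of the braid cone $\polC_\sigma$, showing that $\pol^\sigma$ attains $\max_{\b p\in\pol}\sum_{i>k}c_{\sigma(i)}p_{\sigma(i)}$; an analogous shift identifies $\polQ^\sigma$ as a maximizer of $\sum_{i\le k}c_{\sigma(i)}q_{\sigma(i)}$, so the upper bound is attained at $\b v^{\sigma,k}$.

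Finally I would read off edge directions from adjacencies of the cones $\pol[C]_{\sigma,k}$. Two such maximal cones share a facet in only two ways: either (a) $\sigma$ is fixed and $k$ changes by $\pm 1$, which switches exactly one coordinate of $\b v^{\sigma,k}$ between $\pol^\sigma$ and $\polQ^\sigma$ and produces an edge in direction $\b e_{\sigma(k)}$ or $\b e_{\sigma(k+1)}$; or (b) $k$ is fixed and $\sigma$ changes by an adjacent transposition $(j,j+1)$ with $j\ne k$, so that $j,j+1$ lie both in $\{1,\ldots,k\}$ or both in $\{k+1,\ldots,n\}$, and the vertex change is inherited from the corresponding edge of $\polQ$ or $\pol$, hence in direction $\b e_{\sigma(j)}-\b e_{\sigma(j+1)}$. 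The main subtlety will be ruling out a ``mixed'' case $j=k$ in (b): the sign condition $c_{\sigma(k)}<0<c_{\sigma(k+1)}$ of both adjacent cones is precisely what forbids this, since reversing the order $c_{\sigma(k)}<c_{\sigma(k+1)}$ across such a hypothetical facet would force both signs to flip simultaneously. With this check in hand, every edge of $\GPsum$ lies in a permitted direction, and \Cref{def:genpolymatroid}(iii) finishes the proof.
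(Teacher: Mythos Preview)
Your argument is correct and complete. The paper itself does not prove this lemma: it simply cites \cite[Theorem~6.1]{BrandenburgLohoSmith2024-QuotientsMconvexSets} and omits any argument. Your approach is therefore not comparable to the paper's (there is nothing to compare against), but it is exactly the kind of self-contained polytopal proof the paper advertises in the introduction to \Cref{sec:AdmissCone}. The parametrization of the maximal cones of $\c N_{\permuto+[0,1]^n}$ by pairs $(\sigma,k)$, the explicit formula for the optimizer $\b v^{\sigma,k}$, and the adjacency analysis are all correct; in particular, your observation that the inequality $c_{\sigma(k)}\le c_{\sigma(k+1)}$ is redundant in the description of $\overline{\pol[C]_{\sigma,k}}$ (being implied by $c_{\sigma(k)}\le 0\le c_{\sigma(k+1)}$), and hence does not contribute a facet, is precisely what rules out the mixed case $j=k$. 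One minor simplification: the ``shift by a multiple of $\b 1$'' is not actually needed, since the truncated coefficient vectors $(0,\dots,0,c_{\sigma(k+1)},\dots,c_{\sigma(n)})$ and $(c_{\sigma(1)},\dots,c_{\sigma(k)},0,\dots,0)$ (written in $\sigma$-order) already lie in the closed braid cone $\overline{\polC_\sigma}$, so $\pol^\sigma$ and $\polQ^\sigma$ maximize the respective partial sums without any adjustment.
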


The following unicity result is a reformulation of \cite[Theorem 14.2.8]{Frank:2011}.

\begin{lemma}\label{lem:GeneralizedPolymatroisUniquelyDetermined}
If $\polR$ is a generalized $n$-polymatroid, then $\polR = \GPsum[(\polR^{-\b 1})][(\polR^{+\b 1})]$. In particular, a generalized polymatroid is uniquely determined by its top and bottom faces.
\end{lemma}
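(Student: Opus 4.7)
The plan is to reduce the identity to the unicity result for deformed permutahedra, \cref{lem:topbotgenperm}, via the lifting/projection correspondence of \cref{prop:LiftedGPsumProperties}. Set $\pol := \polR^{+\b 1}$ and $\polQ := \polR^{-\b 1}$. By \cref{lem:polymatroidtopbotquotient}, these are deformed $n$-permutahedra forming a quotient $\quotientQP$, so by \cref{lem:GP-sumpolymatroid} the candidate $\polS := \GPsum$ is itself a generalized $n$-polymatroid.

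The next step is to check that $\pol$ and $\polQ$ are the top and bottom faces of $\polS$. Because a deformed permutahedron lies in a hyperplane orthogonal to $\b 1$, the linear functional $\b 1$ attains its maximum on the polyhedron $\pol + \R_-^n$ precisely on the slice $\pol$; symmetrically, its minimum on $\polQ + \R_+^n$ is attained precisely on $\polQ$. To conclude $\polS^{+\b 1} = \pol$ I still need the inclusion $\pol \subseteq \polQ + \R_+^n$: this follows vertex by vertex from the quotient inequalities $\polQ^\sigma \leq \pol^\sigma$ combined with convexity of $\polQ + \R_+^n$; a symmetric argument gives $\polS^{-\b 1} = \polQ$.

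Finally, I lift both polymatroids. By \cref{prop:LiftedGPsumProperties}, $\lift\polR$ and $\lift\polS$ are deformed $(n+1)$-permutahedra. A direct computation using $\liftmap(\b x) = \bigl(\b x, -\sum_i x_i\bigr)$ shows $(\lift\polR)^{-\b e_{n+1}} = \lift(\polR^{+\b 1}) = \lift\pol$ (maximizing $-x_{n+1}$ on the lift amounts to maximizing $\sum_i x_i$ on the original polytope), and analogously for the bottom face and for $\lift\polS$. Thus $\lift\polR$ and $\lift\polS$ are deformed $(n+1)$-permutahedra sharing their top and bottom faces, so \cref{lem:topbotgenperm} forces $\lift\polR = \lift\polS$; applying $\projmap$ then yields $\polR = \polS$, as desired. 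The step I expect to require the most care is the second one: one must verify that the intersection $(\polQ + \R_+^n) \cap (\pol + \R_-^n)$ actually \emph{restores} both extremal faces (rather than only containing translates of them), and that the two ``top''/``bottom'' sign conventions (for polymatroids versus for deformed permutahedra in $\R^{n+1}$) line up correctly under $\liftmap$.
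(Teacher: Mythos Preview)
Your proof is correct and follows essentially the same route as the paper's: both reduce the question to \cref{lem:topbotgenperm} by lifting $\polR$ and the GP-sum to deformed $(n+1)$-permutahedra and matching their top and bottom faces. Your write-up is more explicit than the paper's about why $(\GPsum)^{+\b 1}=\pol$ and $(\GPsum)^{-\b 1}=\polQ$ (the paper absorbs this into the citations of \cref{lem:polymatroidtopbotquotient,lem:GP-sumpolymatroid}), but the underlying argument is the same.
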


\begin{proof}
The lifting map sends the top face of a generalized polymatroid $\polR$ to the top face of the deformed permutahedron $\lift\!\polR$, and the bottom face of $\polR$ to the bottom face of $\lift\!\polR$.
By \cref{lem:topbotgenperm}, $\polR$ is uniquely determined by $\polR^{-\b 1}$ and $\polR^{+\b 1}$. Now, $\GPsum[(\polR^{-\b 1})][(\polR^{+\b 1})]$ is a generalized polymatroid with the same top and bottom faces as $\polR$ by \cref{lem:polymatroidtopbotquotient,lem:GP-sumpolymatroid}. Therefore: $\polR = \GPsum[(\polR^{-\b 1})][(\polR^{+\b 1})]$.
\end{proof}

The combination of these results provides the equivalence between generalized $n$-polymatroids and quotients of deformed $n$-permutahedra  (see also \cite{Frank:2011}, and \cite[Theorem 1.2 (2) \& (4)]{BrandenburgLohoSmith2024-QuotientsMconvexSets}).

\begin{figure}[t]
\centering
\includegraphics[width=0.9\linewidth]{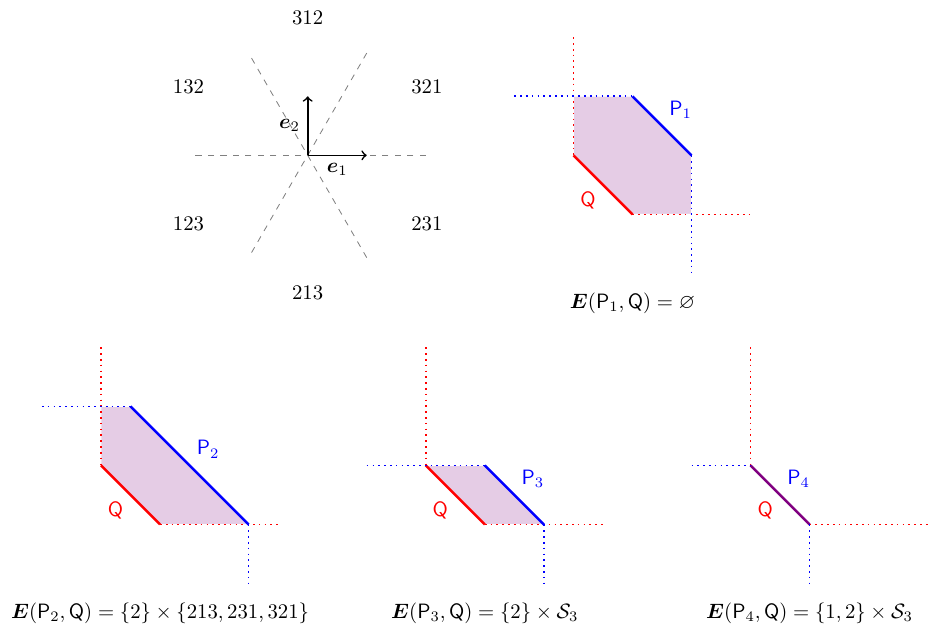}
\caption[Non-normally-equivalent GP-sums $\GPsum$ arising for normally equivalents $\pol_1, \dots, \pol_4$]{The notation $\b E(\pol, \polQ)$ will appear relevant in \Cref{sec:equalitysets}. (Top left) The braid fan of dimension 3 intersected with the plane spanned by $(\b e_1, \b e_2)$: each maximal chamber $\polC_\sigma$ is decorated with the corresponding permutation $\sigma\in \c S_3$.
(Top right \& Bottom) Various $\GPsum[\polQ][\pol_i]$ for normally equivalent $\pol_1,\dots, \pol_4$.
The normal fan of each $\GPsum[\polQ][\pol_i]$ is different.}
\label{fig:VariousGPsums}
\end{figure}

\begin{theorem}\label{thm:bijections}
The following three sets are in bijection:
\begin{compactenum}[(i)]
 \item\label{it:quotients} pairs of deformed $n$-permutahedra forming a quotient,
\item\label{it:gp} generalized $n$-polymatroids,
and
\item\label{it:dp} deformed $(n+1)$-permutahedra $\polR$ for which $\sum_{i=1}^{n+1} x_i = 0$ for all $\b x\in\polR$. %
\end{compactenum}

The bijection between (\ref{it:quotients}) and (\ref{it:gp}) is given by $(\pol, \polQ)\mapsto \GPsum$, its reciprocal by $\polR \mapsto \bigl(\polR^{+\b 1},\, \polR^{-\b 1}\bigr)$.
The bijection between (\ref{it:quotients}) and (\ref{it:dp}) is given by the map $(\pol, \polQ) \mapsto \, \lift{\!(\GPsum)}$, and its reciprocal by
$\polR \mapsto \bigr(\proj{(\polR^{-\b 1})},\, \proj{(\polR^{+\b 1})}\,\bigl)$.
\end{theorem}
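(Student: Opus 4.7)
The plan is to assemble the preceding lemmas into the two asserted bijections, without having to prove anything substantially new.

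For the correspondence between (\ref{it:quotients}) and (\ref{it:gp}), I would begin by checking that both maps are well-defined: \Cref{lem:GP-sumpolymatroid} ensures that $(\pol, \polQ) \mapsto \GPsum$ produces a generalized $n$-polymatroid, and \Cref{lem:polymatroidtopbotquotient} ensures that $\polR \mapsto (\polR^{+\b 1}, \polR^{-\b 1})$ produces a pair of deformed $n$-permutahedra forming a quotient. One direction of the inverse identity, namely $\polR = \GPsum[(\polR^{-\b 1})][(\polR^{+\b 1})]$, is exactly \Cref{lem:GeneralizedPolymatroisUniquelyDetermined}. For the other direction, I need to verify that $(\GPsum)^{+\b 1} = \pol$ and $(\GPsum)^{-\b 1} = \polQ$. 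This follows directly from the definition $\GPsum = (\polQ + \R_+^n) \cap (\pol + \R_-^n)$: any $\b x$ in the GP-sum can be written $\b x = \b p - \b r$ with $\b p \in \pol$ and $\b r \in \R_+^n$, so $\inner{\b 1, \b x} \le \inner{\b 1, \b p}$ (which is constant on $\pol$), with equality iff $\b r = \b 0$, i.e.\ iff $\b x \in \pol$; the argument for $\polQ$ and $-\b 1$ is symmetric.

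For the correspondence between (\ref{it:quotients}) and (\ref{it:dp}), I would compose the previous bijection with the lifting--projection bijection from \Cref{prop:LiftedGPsumProperties}. The lifting map $\liftmap$ sends any generalized $n$-polymatroid into the hyperplane $\{\sum_{i=1}^{n+1} x_i = 0\}$ by construction, producing a deformed $(n+1)$-permutahedron lying in that hyperplane; conversely, if $\polR$ is a deformed $(n+1)$-permutahedron with $\sum x_i = 0$ on $\polR$, then the translation term in $\lift{\!(\proj{\polR})} = \polR - (\sum x_i)\b e_{n+1}$ vanishes and we recover $\polR = \lift{\!(\proj{\polR})}$. The only remaining check is that lifting and projecting interact correctly with taking top and bottom faces: since $\liftmap$ is affine and the vector $\b 1 \in \R^n$ lifts to $(\b 1, -n) \in \R^{n+1}$ (which lies in the same open half-space relative to $\b e_{n+1}$ as $\b 1$ does in $\R^{n+1}$), maximizing $\b e_{n+1}$ on a deformed $(n+1)$-permutahedron in the hyperplane $\{\sum x_i = 0\}$ amounts to maximizing $-\b 1$ on its projection, so that $\polR^{-\b e_{n+1}} = \lift{\!((\proj \polR)^{+\b 1})}$ and symmetrically for the bottom face.

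The main technical point, which I expect to be the only place requiring genuine care, is the identification $(\GPsum)^{\pm \b 1} = \pol,\polQ$ in the first bijection; everything else is a combination of the preceding lemmas and unpacking of the definitions of $\liftmap$ and $\projmap$.
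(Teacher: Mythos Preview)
Your proposal is correct and follows essentially the same route as the paper's (very terse) proof: well-definedness from \Cref{lem:GP-sumpolymatroid} and \Cref{lem:polymatroidtopbotquotient}, bijectivity between (\ref{it:quotients}) and (\ref{it:gp}) from \Cref{lem:GeneralizedPolymatroisUniquelyDetermined}, and then composition with \Cref{prop:LiftedGPsumProperties} for (\ref{it:dp}). One small point to complete in your argument for $(\GPsum)^{+\b 1}=\pol$: you show every maximizer lies in $\pol$, but you should also note $\pol\subseteq\GPsum$, which follows from the quotient condition since each vertex $\pol^\sigma$ satisfies $\pol^\sigma-\polQ^\sigma\in\R_+^n$.
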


\begin{proof}
\Cref{lem:GP-sumpolymatroid} shows that the map $(\pol, \polQ)\mapsto \GPsum$ is well defined, and \cref{lem:GeneralizedPolymatroisUniquelyDetermined} that it is a bijection. The second part is the result of combining this with \Cref{prop:LiftedGPsumProperties}.
\end{proof}

The bijections discussed in this section can be summarized in the following diagram. Recall that, if $\pol$ is a deformed $(n+1)$-permutahedron, then $\sum_{i=1}^{n+1}x_i$ is independent from $\b x\in \pol$. We can identify the set $\SC[n+1] \,/\, \R\b e_{n+1}$ of deformed $(n+1)$-permutahedra up to translation by a multiple of $\b e_{n+1}$ with 
 the set of deformed $(n+1)$-permutahedra for which $\sum_{i=1}^{n+1} x_i = 0$.
Here, we denote $\GP$ the set of generalized $n$-polymatroids.
Note that the kernel of the leftmost projection is generated by the translations by $\b e_{n+1}$, which is included in the lineality space of $\SC[n+1]$.

$$\begin{array}{ccccccl}
\SC[n+1] &\twoheadrightarrow& \SC[n+1] \,/\, \R\b e_{n+1} &\xrightarrow{\sim}& \GP &\xrightarrow{\sim}& \{(\pol, \polQ) ~;~ \pol, \polQ\in\SC^2,\, \text{ with } \quotientQP\} \vspace{0.15cm}\\
\pol &\mapsto& \pol - \left(\sum_{i=1}^{n+1}x_i\right)\b e_{n+1} &&&& \vspace{0.15cm}\\
&& \polS &\mapsto& \proj{\polS} && \vspace{0.15cm}\\
&&&&\polR &\mapsto& (\polR^{+\b 1},\, \polR^{-\b 1}) \\
&&&& \GPsum &\mapsfrom& (\pol, \polQ) \vspace{0.15cm}\\
&& \lift{\polR} &\mapsfrom& \polR &&
\end{array}$$

\subsection{Admissibility cones}\label{ssec:AdmissCones}

\begin{definition}
The \defn{admissibility cone} of two deformed $n$-permutahedra $\pol, \polQ$ with $\genquotientQP$ is:
$$\mathdefn{\admisscone} \coloneqq \bigl\{\GPsum[\polQ'][\pol'] ~;~ \pol'\deformed\pol,~ \polQ'\deformed\polQ, \text{ satisfying } \quotientQP[\pol'][\polQ']\bigr\}$$
\end{definition}

Our first observation is that $\admisscone$ is a polyhedral cone, linearly isomorphic to the intersection of $\DefoCone\times\DefoCone[\polQ]$ with certain linear half-spaces.
We will see that $\admisscone$ is linearly isomorphic to the deformation cone $\DefoCone[\GPsum]$ (which coincides with $\DefoCone[\liftGPsum]$ up to their linearity).

\begin{proposition}\label{prop:AdmissConeIsPolyhedral}
	For $\pol, \polQ\in \SC$, the cone $\admisscone$ is a polyhedral cone which is linearly isomorphic to the cone $\bigl(\DefoCone[\polQ] \times \DefoCone\bigr) \cap \bigl\{ (\polQ, \pol) ~;~ \quotientQP \bigr\}$.
\end{proposition}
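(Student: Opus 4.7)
The plan is to construct an explicit linear bijection $\Phi$ between the cone $\bigl(\DefoCone[\polQ] \times \DefoCone\bigr) \cap \bigl\{(\pol', \polQ') ~;~ \quotientQP[\pol'][\polQ']\bigr\}$ and $\admisscone$, defined by $\Phi(\pol', \polQ') \coloneqq \GPsum[\polQ'][\pol']$. By \Cref{thm:bijections}, this map is a bijection from pairs of deformed $n$-permutahedra forming a quotient to generalized $n$-polymatroids, and by the very definition of $\admisscone$, its image when restricted to the source cone is exactly $\admisscone$.

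First, I would check that the source cone is polyhedral. Both $\DefoCone$ and $\DefoCone[\polQ]$ are polyhedral cones (as noted after \Cref{def:defocone}), so their Cartesian product is polyhedral as well. In the vertex-coordinate parameterization of the deformation cone --- where a deformation of $\pol$ is recorded by the positions $(\pol'^{\sigma})_{\sigma \in \c S_n}$ of its vertices --- the inequalities $\polQ'^{\sigma}_i \leq \pol'^{\sigma}_i$ defining the quotient relation for $\sigma \in \c S_n$ and $i \in [n]$ are manifestly linear, so their intersection with the product cone carves out a polyhedral subcone.

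The crucial step is showing that $\Phi$ is linear, i.e. that $\Phi(\lambda \pol'_1 + \mu \pol'_2,\, \lambda \polQ'_1 + \mu \polQ'_2) \,=\, \lambda \,\GPsum[\polQ'_1][\pol'_1] + \mu \,\GPsum[\polQ'_2][\pol'_2]$ for $\lambda, \mu \geq 0$ and admissible pairs $(\pol'_j, \polQ'_j)$. The right-hand side is a Minkowski sum of two generalized polymatroids and is itself a generalized polymatroid because all its edges lie in directions $\b e_i - \b e_j$ or $\b e_i$, as required by \Cref{def:genpolymatroid}. Its top and bottom faces (with respect to $\pm\b 1$) are Minkowski sums of the corresponding top and bottom faces, namely $\lambda \pol'_1 + \mu \pol'_2$ and $\lambda \polQ'_1 + \mu \polQ'_2$; the left-hand side has the same top and bottom faces by the GP-sum construction (\Cref{lem:polymatroidtopbotquotient}). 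Then \Cref{lem:GeneralizedPolymatroisUniquelyDetermined} forces the two generalized polymatroids to coincide.

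Putting the pieces together, $\Phi$ is a linear bijection between the source polyhedral cone and $\admisscone$, which simultaneously establishes polyhedrality of $\admisscone$ and the claimed linear isomorphism. The main (and really only) obstacle is the linearity of $\Phi$; happily, it reduces to the uniqueness result \Cref{lem:GeneralizedPolymatroisUniquelyDetermined} rather than to any direct coordinate computation of the GP-sum vertices.
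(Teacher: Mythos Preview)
Your proof is correct and follows essentially the same approach as the paper: you establish polyhedrality of the source cone via the linearity of the vertex-coordinate maps $\pol'\mapsto(\pol')^\sigma_i$, and then transport polyhedrality along the linear bijection $(\pol',\polQ')\mapsto\GPsum[\polQ'][\pol']$. The only difference is organizational: the paper isolates the linearity of the GP-sum as a separate lemma (\Cref{lem:GPsumIsLinear}) and cites it, whereas you reprove it inline via \Cref{lem:GeneralizedPolymatroisUniquelyDetermined}; your citation of \Cref{lem:polymatroidtopbotquotient} for identifying the top and bottom faces of the GP-sum is slightly off (that lemma goes the other way), but the fact you need is immediate from the definition of $\GPsum$ or from \Cref{lem:GeneralizedPolymatroisUniquelyDetermined}.
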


The proof requires the next lemma, which is a reformulation of \cite[Theorem 14.2.15]{Frank:2011} using the characterization of quotients, as we now detail.
We give a self-contained (similar) argument.

\begin{figure}
	\centering
	\includegraphics[width=0.9\linewidth]{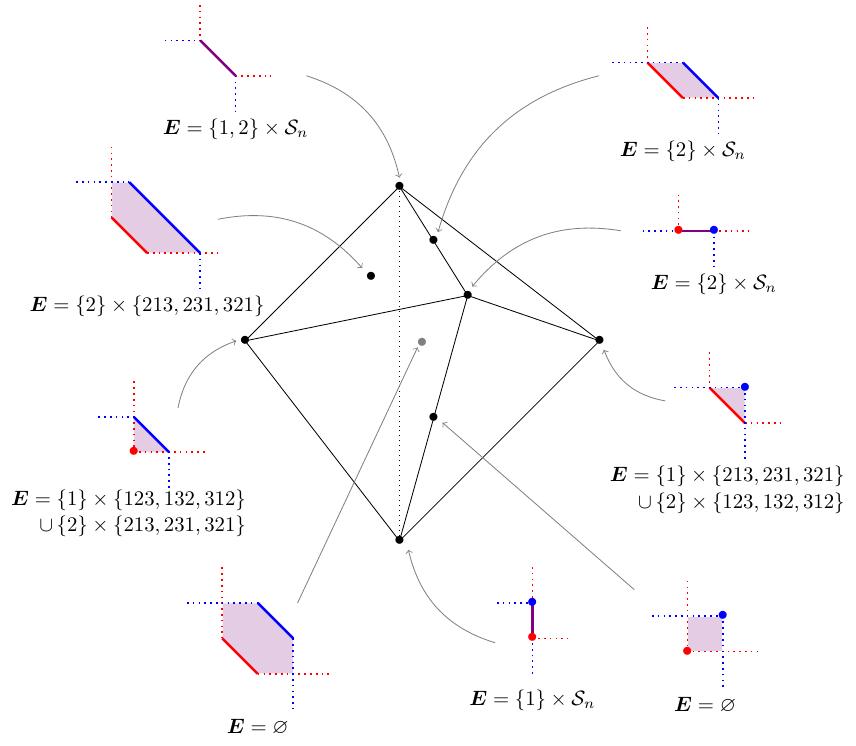}
	\caption[{The submodular cone $\SC[3]$, and various GP-sums}]{
    The notation $\b E(\pol, \polQ)$ will appear relevant in \Cref{sec:equalitysets}.
    The submodular cone $\SC[3]$ has dimension $7$.
    As its lineality space has dimension $3$, it is a cone over a polytope of dimension $3\, (=7-3-1)$. We depict this polytope (a bi-pyramid over a vertical triangle, shown at the center of the figure).
    Each point of this polytope corresponds to the GP-sum of two deformed $1$-permutahedra (which are segments or points).
    Each face corresponds to a class of normally equivalent GP-sums.
    We illustrate some of these GP-sums.}
	\label{fig:SC3}
\end{figure}

\begin{lemma}\label{lem:GPsumIsLinear}
The map $(\pol, \polQ) \mapsto \GPsum$ is linear (on virtual polytopes):
for deformed permutahedra $\pol_1, \pol_2$, $\polQ_1, \polQ_2$ and $\lambda,\mu\geq 0$, if $\quotientQP[\pol_1][\polQ_1]$ and $\quotientQP[\pol_2][\polQ_2]$, then $\quotientQP[(\lambda\pol_1 + \mu\pol_2)][(\lambda\polQ_1 + \mu\polQ_2)]$, and:
$$\GPsum[(\lambda\pol_1 + \mu\pol_2)][(\lambda\polQ_1 + \mu\polQ_2)] = \lambda(\GPsum[\pol_1][\polQ_1]) + \mu(\GPsum[\pol_2][\polQ_2])$$
\end{lemma}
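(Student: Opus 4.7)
The identity is not immediate from the definition $\GPsum = (\polQ+\R_+^n)\cap(\pol+\R_-^n)$, since Minkowski sum does not in general distribute over intersection. The idea is to bypass this description entirely and exploit instead the fact that a generalized polymatroid is uniquely determined by its top and bottom faces (\cref{lem:GeneralizedPolymatroisUniquelyDetermined}), both of which behave linearly under Minkowski sum.

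First I would verify that $(\lambda\pol_1+\mu\pol_2,\,\lambda\polQ_1+\mu\polQ_2)$ still forms a quotient. For any two polytopes $A,B$ in $\R^n$ and any direction $\b c$ one has $(A+B)^{\b c}=A^{\b c}+B^{\b c}$. Since deformed permutahedra have a unique vertex in each chamber $\polC_\sigma$, specialization to $\b c\in\interior(\polC_\sigma)$ gives $(\alpha A+\beta B)^\sigma=\alpha A^\sigma+\beta B^\sigma$ for all $\alpha,\beta\geq 0$. Comparing $i$-th coordinates of these vertex formulas and using the two assumed quotient inequalities yields
\[
(\lambda\polQ_1+\mu\polQ_2)^\sigma_i \;=\; \lambda(\polQ_1)^\sigma_i+\mu(\polQ_2)^\sigma_i \;\leq\; \lambda(\pol_1)^\sigma_i+\mu(\pol_2)^\sigma_i \;=\; (\lambda\pol_1+\mu\pol_2)^\sigma_i,
\]
so $\quotientQP[(\lambda\pol_1+\mu\pol_2)][(\lambda\polQ_1+\mu\polQ_2)]$ holds.

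Next, set $\polR_k \coloneqq \GPsum[\polQ_k][\pol_k]$ for $k=1,2$, which by \cref{lem:GP-sumpolymatroid} is a generalized polymatroid, and let $\polR \coloneqq \lambda\polR_1+\mu\polR_2$. By \cref{def:genpolymatroid}(i) the generalized $n$-polymatroids are exactly the deformations of $\permuto+[0,1]^n$, and the set of deformations of any fixed polytope is closed under nonnegative linear combinations (as noted after \cref{def:defocone}), so $\polR$ is itself a generalized polymatroid. The bijection of \cref{thm:bijections} applied to the identity $\polR_k = \GPsum[\polQ_k][\pol_k]$ gives $\polR_k^{+\b 1}=\pol_k$ and $\polR_k^{-\b 1}=\polQ_k$. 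Taking top and bottom faces commutes with Minkowski sum (again by $(A+B)^{\b c}=A^{\b c}+B^{\b c}$ with $\b c=\pm\b 1$), hence
\[
\polR^{+\b 1}=\lambda\pol_1+\mu\pol_2, \qquad \polR^{-\b 1}=\lambda\polQ_1+\mu\polQ_2.
\]
Applying \cref{lem:GeneralizedPolymatroisUniquelyDetermined} to $\polR$ then yields $\polR=\GPsum[\polR^{-\b 1}][\polR^{+\b 1}]$, which is precisely the claimed equality.

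There is no genuine obstacle in the argument beyond the initial conceptual move: every ingredient (linearity of $(\cdot)^\sigma$ under Minkowski sum, closure of generalized polymatroids under nonnegative combinations, and the top/bottom-face reconstruction of a polymatroid) is already recorded in the preceding material. The only place one might be tempted to proceed by direct computation on the half-space description would be to manipulate $(\polQ+\R_+^n)\cap(\pol+\R_-^n)$; it is precisely to sidestep that non-linear interaction that the proof is routed through \cref{lem:GeneralizedPolymatroisUniquelyDetermined}.
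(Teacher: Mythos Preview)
Your proposal is correct and follows essentially the same approach as the paper's proof: both route the argument through \cref{lem:GeneralizedPolymatroisUniquelyDetermined}, using the identity $(A+B)^{\b c}=A^{\b c}+B^{\b c}$ to compute the top and bottom faces of $\lambda(\GPsum[\polQ_1][\pol_1])+\mu(\GPsum[\polQ_2][\pol_2])$, verifying the quotient condition via the same coordinatewise inequality, and noting that a nonnegative combination of generalized polymatroids is again one. The only difference is the order of the steps and your explicit remark on why the half-space description should be avoided.
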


\begin{proof}
Optimization and Minkowski sums commute: $(\pol + \polQ)^{\b c} = \pol^{\b c} + \polQ^{\b c}$.
Hence:
$$\Bigl(\lambda(\GPsum[\pol_1][\polQ_1]) + \mu(\GPsum[\pol_2][\polQ_2])\Bigr)^{+\b 1} = \Bigl(\lambda(\GPsum[\pol_1][\polQ_1])\Bigr)^{+\b 1} + \Bigl(\mu(\GPsum[\pol_2][\polQ_2])\Bigr)^{+\b 1} = \lambda\pol_1 + \mu\pol_2$$

The symmetric case holds for $-\b 1$ and $\lambda\polQ_1 + \mu\polQ_2$.

Besides, as $\quotientQP[\pol_1][\polQ_1]$ and $\quotientQP[\pol_2][\polQ_2]$, for all $i\in [n]$ and $\sigma\in \c S_n$, we have: $(\lambda\polQ_1+\mu\polQ_2)^\sigma_i = \lambda(\polQ_1)^\sigma_i + \mu(\polQ_2)^\sigma_i \leq \lambda(\pol_1)^\sigma_i + \mu(\pol_2)^\sigma_i = (\lambda\pol_1+\mu\pol_2)^\sigma_i$, hence we get that $\quotientQP[(\lambda\pol_1 + \mu\pol_2)][(\lambda\polQ_1 + \mu\polQ_2)]$.

Moreover, as $\lambda(\GPsum[\pol_1][\polQ_1]) + \mu(\GPsum[\pol_2][\polQ_2])$ is a sum of generalized $n$-polymatroids, it is itself a generalized $n$-polymatroid.
According to \Cref{lem:GeneralizedPolymatroisUniquelyDetermined}, generalized $n$-polymatroids are recognizable by their top and bottom faces, which implies the claimed equality.
\end{proof}

\begin{remark}
	Note that the map $(\pol, \polQ) \mapsto \GPsum$ \textbf{is not} bi-linear in general.
	For instance, $\GPsum[(\lambda\polQ)] \ne \lambda(\GPsum)$, and $(\GPsum[\polQ_1]) + (\GPsum[\polQ_2]) \ne \GPsum[(\polQ_1 + \polQ_2)]$.
	The correct formulae are: $\GPsum[(\lambda\polQ)][(\lambda\pol)] = \lambda(\GPsum)$, and $(\GPsum[\polQ_1]) + (\GPsum[\polQ_2]) = \GPsum[(\polQ_1 + \polQ_2)][(2\pol)]$.
\end{remark}

\begin{proof}[Proof of \cref{prop:AdmissConeIsPolyhedral}]
The map $\pol\mapsto \pol_i^\sigma$ is linear. Hence, the cone $\bigl\{ (\polQ',\pol')\in \SC^2 ~;~ \quotientQP[\pol'][\polQ']\bigr\}$ is polyhedral, because it is the intersection of the polyhedral cone $\SC^2$ with half-spaces of the form $(\pol')_i^\sigma\leq(\polQ')_i^\sigma$.
The image by the map $(\polQ,\pol) \mapsto \GPsum$ of the polyhedral cone $\bigl(\DefoCone[\polQ] \times \DefoCone\bigr) \cap \bigl\{ (\polQ',\pol')\in \SC ~;~ \quotientQP[\pol'][\polQ']\bigr\}$ is $\admisscone$, and this map is linear by \Cref{lem:GPsumIsLinear}.
\end{proof}

We can finally state the main result of this section.

\begin{theorem}\label{thm:AdmissibilityConeIsDeformationCone}
	For $\pol, \polQ\in \SC$ with $\genquotientQP$, we have:
	\begin{align*}
		\DefoCone[\GPsum]  &= \admisscone&\text{ and }&&
		\DefoCone[\liftGPsum]  &= \admisscone\oplus \R\b e_{n+1}.
	\end{align*}

\end{theorem}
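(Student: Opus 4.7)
The plan is to first establish the identity $\DefoCone[\GPsum] = \admisscone$ in $\R^n$, and then to transfer it to $\R^{n+1}$ via the lifting map of \Cref{prop:LiftedGPsumProperties} to obtain the second identity.

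For the inclusion $\DefoCone[\GPsum] \supseteq \admisscone$ (the harder direction, where the genericity hypothesis $\genquotientQP$ enters), I would take $\pol'\deformed\pol$ and $\polQ'\deformed\polQ$ with $\quotientQP[\pol'][\polQ']$ and realize $\GPsum[\polQ'][\pol']$ as a weak Minkowski summand of a dilate of $\GPsum$. By the weak Minkowski summand characterization in \Cref{def:deformation}, there exist deformed $n$-permutahedra $\pol''$ and $\polQ''$ and a common scalar $\lambda > 0$ with $\pol' + \pol'' = \lambda\pol$ and $\polQ' + \polQ'' = \lambda\polQ$ (a common $\lambda$ can be used by enlarging it and absorbing the excess into $\pol''$, $\polQ''$). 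A direct vertex computation then gives
$$
(\pol'')^\sigma_i - (\polQ'')^\sigma_i \;=\; \lambda\bigl((\pol)^\sigma_i - (\polQ)^\sigma_i\bigr) - \bigl((\pol')^\sigma_i - (\polQ')^\sigma_i\bigr)
$$
for all $\sigma\in\c S_n$ and $i\in[n]$. The genericity $\genquotientQP$ makes $(\pol)^\sigma_i - (\polQ)^\sigma_i$ strictly positive, so taking $\lambda$ larger than the finite maximum of $\frac{(\pol')^\sigma_i - (\polQ')^\sigma_i}{(\pol)^\sigma_i - (\polQ)^\sigma_i}$ secures $\quotientQP[\pol''][\polQ'']$. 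Then \Cref{lem:GPsumIsLinear} applies and yields
$$
\lambda\,(\GPsum) \;=\; \GPsum[(\polQ' + \polQ'')][(\pol' + \pol'')] \;=\; \GPsum[\polQ'][\pol'] + \GPsum[\polQ''][\pol''],
$$
exhibiting $\GPsum[\polQ'][\pol']$ as a weak Minkowski summand of $\GPsum$, hence a deformation.

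For the reverse inclusion $\DefoCone[\GPsum] \subseteq \admisscone$, let $\polS \deformed \GPsum$. By \Cref{def:deformation} its edges lie among those of $\GPsum$, which are in directions $\b e_i - \b e_j$ or $\b e_i$, so $\polS$ is a generalized $n$-polymatroid by \Cref{def:genpolymatroid}. Setting $\pol' \coloneqq \polS^{+\b 1}$ and $\polQ' \coloneqq \polS^{-\b 1}$, \Cref{lem:polymatroidtopbotquotient} yields $\quotientQP[\pol'][\polQ']$ and \Cref{lem:GeneralizedPolymatroisUniquelyDetermined} yields $\polS = \GPsum[\polQ'][\pol']$. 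Since taking the face in a fixed direction is compatible with weak Minkowski summands, the faces $\pol'$ and $\polQ'$ are themselves deformations of $\pol$ and $\polQ$ respectively, so $\polS \in \admisscone$.

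Finally, \Cref{prop:LiftedGPsumProperties} provides a linear bijection $\liftmap$ between generalized $n$-polymatroids and deformed $(n+1)$-permutahedra lying in the hyperplane $\sum_{i=1}^{n+1} x_i = 0$, preserving edge directions and therefore normal fans. Transferring the first identity through this bijection, and noting that $\DefoCone[\liftGPsum]$ gains exactly one additional direction of translational lineality over $\lift(\DefoCone[\GPsum])$, namely $\R\b e_{n+1}$, delivers $\DefoCone[\liftGPsum] = \admisscone \oplus \R\b e_{n+1}$. The main obstacle in the argument is the first step: without the strict inequalities afforded by $\genquotientQP$, the auxiliary pair $(\pol'', \polQ'')$ might fail to form a quotient for every finite $\lambda$, blocking the use of the linearity in \Cref{lem:GPsumIsLinear}.
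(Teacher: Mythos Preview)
Your proof is correct and takes a genuinely different route from the paper's. For the inclusion $\admisscone\subseteq\DefoCone[\GPsum]$, the paper does not construct a Minkowski complement directly. Instead, it first argues abstractly that $\admisscone$ coincides with $\DefoCone[\polS]$ for an auxiliary polytope $\polS$ built as the Minkowski sum of representatives of all (finitely many) normal equivalence classes occurring in $\admisscone$; the generic hypothesis $\genquotientQP$ is then only invoked to place $\GPsum$ in the interior of $\admisscone=\DefoCone[\polS]$, forcing $\GPsum$ and $\polS$ to be normally equivalent. Your argument is more elementary and more transparent: you bypass the auxiliary $\polS$ entirely by exhibiting $\GPsum[\polQ'][\pol']$ as an explicit weak summand of $\lambda(\GPsum)$, using strictness of the quotient to guarantee that for $\lambda$ large the ``complementary'' pair $(\pol'',\polQ'')$ still forms a quotient, so that \Cref{lem:GPsumIsLinear} applies. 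For the reverse inclusion both proofs are essentially the same (faces of deformations are deformations of the corresponding faces, combined with \Cref{lem:polymatroidtopbotquotient,lem:GeneralizedPolymatroisUniquelyDetermined}). The paper's detour buys a structural intermediate statement (any admissibility cone is \emph{a priori} a deformation cone of something), while your approach buys concision and an explicit witness for the Minkowski decomposition.
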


\begin{proof}
The second identity follows from the first together with \Cref{thm:bijections}.

For the first identity, we will first prove that $\admisscone=\DefoCone[\polS]$ for a certain generalized polymatroid~$\polS$, and then we will prove that we can take $\GPsum$ in the place of $\polS$.

	There are finitely many normal equivalence classes of generalized polymatroids. Therefore, there are finitely many normal equivalence classes of polytopes within~$\admisscone$. Put $\polR_0=\GPsum$ and choose representatives $\polR_1,\dots,\polR_s$ of all the other normal equivalence classes of polytopes in~$\admisscone$. We define $\polS=\sum_{i=0}^s \polR_i$ as their Minkowski sum.
	
Let $\polR\in \admisscone$. By construction, it is normally equivalent to one of the $\polR_i$, which is a deformation of $\polS$. Since being a deformation only depends on the normal equivalence class (see \cref{def:deformation}), the polytope $\polR$ must be itself a deformation of $\polS$.
Thus $\admisscone\subseteq \DefoCone[\polS]$.
	
	For the converse inclusion, we note first that $\polS$ is a generalized polymatroid of the form $\polS=\GPsum[\polQ'][\pol']$ for some $\polQ'$ normally equivalent to $\polQ$ and some $\pol'$ normally equivalent to $\pol$ such that $\quotientQP[\pol][\polQ']$. Indeed, by definition of the admissibility cone, each of the summands defining $\polS$ is of the form $\polR_i=\GPsum[\polQ_i][\pol_i]$ for some  $\pol_i\deformed\pol$ and $\polQ_i\deformed\polQ$ satisfying $\quotientQP[\pol_i][\polQ_i]$. Hence, we have $\pol'=\polS^{+\b 1}=\sum_{i=0}^s\polR_i^{+\b 1}=\sum_{i=0}^s\pol_i$, which must be normally equivalent to $\pol$ because $\pol_0=\pol$ and all the other $\pol_i$ are deformations of~$\pol$. By the same argument $\polQ'$ is normally equivalent to $\polQ$. And $\quotientQP [\pol'][\polQ']$ because $\quotientQP[\pol_i][\polQ_i ]$ and this property is preserved by Minkowski sums.
	
	Now, for any deformation $\polR$ of $\polS$, we will have that $\polR^{+\b 1}$ is a deformation of $\pol$, because $\polR^{+\b 1}$ must be a deformation of $\pol'$, and $\pol$ and $\pol'$ are normally equivalent. Similarly, we have $\polR^{-\b 1}$ is a deformation of $\polQ$. Finally, note that since $\polS$ is a generalized polymatroid, so must be $\polR$, because the family of generalized $n$-polymatroids is closed under deformations. By \cref{lem:polymatroidtopbotquotient}, this implies that $\quotientQP[\polR^{+\b 1}][\polR^{-\b 1}]$. All together this shows that $\polR$ is in the admissibility cone. And hence we have proven that  $\admisscone\supseteq \DefoCone[\polS]$.

	Finally, we need to prove that $\polS$ and $\GPsum$ are normally equivalent (and hence have the same deformation cone). For this, note that $\GPsum$ lies in the interior of $\admisscone$ because it does not belong to any of its facet-defining hyperplanes ($\polQ$ is in the interior of $\DefoCone[\polQ]$, $\pol$ is in the interior of $\DefoCone$, and $\polQ^\sigma_i < \pol^\sigma_i$ for all $i\in [n]$ and $\sigma \in \c S_n$ because $\genquotientQP$). Since we already proved that $\admisscone=\DefoCone[\polS]$, this shows that $\GPsum$ belongs to the interior of $\DefoCone[\polS]$, and thus that it is normally equivalent to $\polS$. This concludes the proof.
\end{proof}

\begin{remark}
The hypothesis $\genquotientQP$ is fundamental, and cannot be replaced by the weaker $\quotientQP$.
For instance, take $\pol$ to be the segment $\bigl[(0, 1),\, (1, 0)\bigr]$ as in \Cref{fig:SC3} (top left), and $\polQ = \pol + (1, 1)$.
We have $\genquotientQP$ and $\DefoCone[\GPsum] = \SC[3]$.
On the other side, we have $\quotientQP[\pol][\pol]$, and $\DefoCone[{\GPsum[\pol][\pol]}] \subsetneq \SC[3]$ (as $\GPsum[\pol][\pol] = \pol$ is a segment, the cone $\DefoCone[{\GPsum[\pol][\pol]}]$ is actually a ray). In \Cref{sec:equalitysets}, we expand the concept of admissibility cones to non-generic quotients.
\end{remark}

\section{Many rays of the submodular cone}\label{sec:Rays}

The goal of the present section is to establish a new bound on the number of rays of the submodular cone $\SC$.
Recall that a polytope is ``a ray of $\SC$'' if it is a Minkowski indecomposable deformed $n$-permutahedron, that is to say a polytope $\pol$ whose edges are all in directions $\b e_i - \b e_j$ for some $i\ne j$, and that cannot be written as $\pol = \polQ + \polR$ except if $\polQ$ and $\polR$ are translated dilates of $\pol$.
Equivalently, $\pol\in\SC$ is a ray of $\SC$ if and only if $\dim\DefoCone = n+1$.

Using the ideas underlying \Cref{sec:AdmissCone}, we present in this section an inductive construction of rays of the submodular cone: from two rays $\pol, \polQ$ of $\SC$, we will construct one ray of $\SC[n+1]$ as $\lift{\bigl(\GPsum[(\mu\polQ + \b t)]}\bigr)$ for well-chosen $\mu > 0$ and $\b t\in \R^n$.

Before addressing the general case, we look at the case where $\polQ$ is a point.

\subsection{Independence polytopes}\label{sec:indpol}

We first consider the case where $\polQ$ is a point, as it will reveal to be very instructive. After a suitable translation if needed, we can assume $\polQ = \b 0$.
Note that setting $\pol = \b 0$ instead only amounts to applying a central symmetry.

\begin{definition}
For a deformed $n$-permutahedron $\pol\in\SC$, let $\b t\in \R^n$ be defined by $t_i = \min_{\b x\in \pol}x_i$.
The \defn{independence polytope} of $\pol$ is $\mathdefn{\pol[Ind]_{\pol}} \coloneqq \GPsum[\b 0][(\pol - \b t)] = \R_+^n\cap (\pol - \b t +\R_-^n)$.
\end{definition}

The name is motivated from terminology for matroids, while without the translation vector $\b t$, they are known by the name \emph{polymatroid polytope}. 
More precisely, if $\c M$ is a matroid without coloops, and $\pol_{\c M} = \conv\{\b e_B ~;~ B \subseteq[n] \text{ is a basis in } \c M\}$ denotes the associated \defn{matroid (base) polytope}, then $\pol[Ind]_{\pol_{\c M}}$ is the \defn{independence matroid polytope} associated to $\c M$, that is $\pol[Ind]_{\pol_{\c M}} = \conv\{\b e_S ~;~ S \subseteq[n] \text{ is an independent set in } \c M\}$.

\begin{lemma}\label{lem:IndependencePolytopeSum}
For any deformed $n$-permutahedra $\pol$ and $\polQ$, we have $\pol[Ind]_{\pol + \polQ} = \pol[Ind]_{\pol} + \pol[Ind]_{\polQ}$.
\end{lemma}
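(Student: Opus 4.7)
The plan is to reduce the identity to the linearity statement of \Cref{lem:GPsumIsLinear}, which expresses that the GP-sum is linear in the pair formed by bottom and top. To do this, I first need to understand how the translation vector behaves under Minkowski sums, and then verify that the quotient conditions required by \Cref{lem:GPsumIsLinear} are satisfied in the relevant reductions.

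First, I would denote by $\b t^\pol, \b t^\polQ, \b t^{\pol + \polQ}$ the translation vectors from the definition, where $t^\pol_i = \min_{\b x \in \pol} x_i$, etc. Since minimizing a linear functional commutes with Minkowski addition, we have $t^{\pol+\polQ}_i = t^\pol_i + t^\polQ_i$ for every $i\in[n]$, so $\b t^{\pol+\polQ} = \b t^\pol + \b t^\polQ$. Consequently,
\[ (\pol+\polQ) - \b t^{\pol+\polQ} = (\pol - \b t^\pol) + (\polQ - \b t^\polQ). \]
The polytopes $\pol - \b t^\pol$ and $\polQ - \b t^\polQ$ both have all vertex coordinates non-negative, hence $\quotientQP[\pol - \b t^\pol][\b 0]$ and $\quotientQP[\polQ - \b t^\polQ][\b 0]$ both hold, since for the point $\b 0$ one has $\b 0^\sigma_i = 0 \leq (\pol - \b t^\pol)^\sigma_i$ for every $\sigma\in\c S_n$ and $i\in[n]$, and similarly for~$\polQ$.

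Now I can apply \Cref{lem:GPsumIsLinear} with $\lambda = \mu = 1$, $\polQ_1 = \polQ_2 = \b 0$, $\pol_1 = \pol - \b t^\pol$, and $\pol_2 = \polQ - \b t^\polQ$. This gives
\[ \GPsum[\b 0][(\pol - \b t^\pol) + (\polQ - \b t^\polQ)] = \GPsum[\b 0][\pol - \b t^\pol] + \GPsum[\b 0][\polQ - \b t^\polQ]. \]
By the previous step the left-hand side equals $\GPsum[\b 0][(\pol+\polQ) - \b t^{\pol+\polQ}] = \pol[Ind]_{\pol+\polQ}$, while the right-hand side is by definition $\pol[Ind]_\pol + \pol[Ind]_\polQ$, concluding the proof.

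The only subtle point is confirming that $\b t^{\pol+\polQ} = \b t^\pol + \b t^\polQ$ (which is a standard fact about support functions and Minkowski sums) and checking that the quotient conditions for \Cref{lem:GPsumIsLinear} hold; both are immediate and there is no real obstacle. The argument is essentially a change-of-variables wrapped around the bilinearity-style identity already established.
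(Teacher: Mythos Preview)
Your proof is correct and follows essentially the same approach as the paper: observe that the translation vectors add (so $\b t^{\pol+\polQ} = \b t^\pol + \b t^\polQ$), then invoke \Cref{lem:GPsumIsLinear} with bottoms $\b 0,\b 0$ and tops $\pol-\b t^\pol,\,\polQ-\b t^\polQ$. You are in fact a bit more careful than the paper in explicitly checking the quotient hypotheses $\b 0\isquotient \pol-\b t^\pol$ and $\b 0\isquotient \polQ-\b t^\polQ$ needed to apply that lemma.
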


\begin{proof}
Pick $\b t, \b t'\in\R^n$ defined by $t_i = \min_{\b x\in \pol}x_i$ and $t'_i = \min_{\b x\in \polQ}x_i$.
Then, immediately, $\b u = \b t + \b t'$ satisfies $u_i = \min_{\b x\in \pol+\polQ}x_i$.
By \Cref{lem:GPsumIsLinear}, we have that: $\pol[Ind]_{\pol} + \pol[Ind]_{\polQ} =  \bigl(\GPsum[(\pol-\b t)][\b 0]\bigr) +  \bigl(\GPsum[(\polQ-\b t')][\b 0]\bigr) = \GPsum[(\pol+\polQ - \b u)][(\b 0 + \b 0)] = \pol[Ind]_{\pol + \polQ}$.
\end{proof}

\begin{theorem}\label{thm:IndependencePolytopesEasyVersion}
Let $\pol$ be a deformed $n$-permutahedron. The following are equivalent:
\begin{compactenum}
    \item $\pol$ is indecomposable,
    \item the generalized $n$-polymatroid  $\pol[Ind]_{\pol}$ is indecomposable,
    \item  the deformed $(n+1)$-permutahedron $\,\lift{\!(\pol[Ind]_{\pol})}$ is indecomposable.
\end{compactenum}
\end{theorem}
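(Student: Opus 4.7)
The plan is to establish $(2)\Leftrightarrow(3)$ via the lifting bijection $\liftmap$, and $(1)\Leftrightarrow(2)$ by exploiting the fact that $\pol[Ind]_\pol = \GPsum[\b 0][(\pol-\b t)]$ together with the linearity results of \Cref{sec:AdmissCone}.

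For $(2)\Leftrightarrow(3)$: the map $\liftmap$ is a linear isomorphism from $\R^n$ onto the hyperplane $H\coloneqq\{\b y\in\R^{n+1} : y_{n+1}=-\sum_{i=1}^n y_i\}$, with inverse $\projmap|_H$. By \Cref{prop:LiftedGPsumProperties} it restricts to a bijection between generalized $n$-polymatroids and deformed $(n+1)$-permutahedra contained in $H$, and linearity ensures that Minkowski sums and translated dilates on either side correspond to their counterparts on the other. Given a decomposition $\lift{\!(\pol[Ind]_\pol)} = \polS'+\polT'$, each summand is a deformed $(n+1)$-permutahedron (Minkowski summands of a deformation are deformations of the same polytope) lying in a parallel hyperplane $\{y_{n+1}=c_i\}$ with $c_1+c_2=0$; translating by $-c_i\b e_{n+1}$ places them in $H$ without affecting their normal fans, and projection delivers the associated decomposition of $\pol[Ind]_\pol$; the reverse direction is an immediate application of $\liftmap$.

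For $(1)\Rightarrow(2)$: assume $\pol$ is indecomposable, and let $\pol[Ind]_\pol = \polS+\polT$. Both $\polS$ and $\polT$ are again generalized $n$-polymatroids, because that class (being the deformations of $\permuto+[0,1]^n$) is closed under Minkowski summands. Since optimization commutes with Minkowski sums, taking the top face ($+\b 1$) and bottom face ($-\b 1$) of the decomposition yields
\[ \polS^{+\b 1}+\polT^{+\b 1} = \pol-\b t, \qquad \polS^{-\b 1}+\polT^{-\b 1} = \b 0. \]
The second equation forces $\polS^{-\b 1}$ and $\polT^{-\b 1}$ to both be points, say $\b a$ and $-\b a$, while the first, combined with the indecomposability of $\pol$, gives $\polS^{+\b 1} = \lambda(\pol-\b t)+\b u$ and $\polT^{+\b 1} = (1-\lambda)(\pol-\b t)-\b u$ for some $\lambda\in[0,1]$ and $\b u\in\R^n$. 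The quotient conditions from \Cref{lem:polymatroidtopbotquotient} reduce coordinate-wise to $a_i\leq u_i$ and $-a_i\leq -u_i$ for every $i$ (using that $\min_{\b x\in\lambda(\pol-\b t)+\b u}x_i = u_i$), forcing $\b u = \b a$. Applying the linearity and translation-equivariance of the GP-sum (\Cref{lem:GPsumIsLinear}), we conclude $\polS = \lambda\pol[Ind]_\pol+\b a$ and $\polT = (1-\lambda)\pol[Ind]_\pol-\b a$, so $\pol[Ind]_\pol$ is indecomposable.

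For $(2)\Rightarrow(1)$: suppose $\pol = \polQ+\polR$. By \Cref{lem:IndependencePolytopeSum} we have $\pol[Ind]_\pol = \pol[Ind]_\polQ+\pol[Ind]_\polR$, and indecomposability of $\pol[Ind]_\pol$ gives $\pol[Ind]_\polQ = \lambda\pol[Ind]_\pol+\b w$ for some $\lambda\geq 0$ and $\b w\in\R^n$. Taking top faces yields $\polQ-\b t_\polQ = \lambda(\pol-\b t_\pol)+\b w$, so $\polQ$ is a translated dilate of $\pol$; symmetrically for $\polR$. The main subtlety of the whole argument is the quotient-pinning step in $(1)\Rightarrow(2)$: a priori, a decomposition of $\pol[Ind]_\pol$ might shift the top-face summands and the bottom-face summands inconsistently, and only the quotient compatibility from \Cref{lem:polymatroidtopbotquotient} forces these shifts to agree so that the decomposition actually comes from translated dilates of $\pol[Ind]_\pol$.
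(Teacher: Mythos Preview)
Your argument is correct, and for $(2)\Leftrightarrow(3)$ and $(2)\Rightarrow(1)$ it follows the same path as the paper. (One cosmetic slip: the parallel hyperplanes containing the summands of $\lift{\!(\pol[Ind]_\pol)}$ are $\{\sum_{i=1}^{n+1} y_i = c\}$, not $\{y_{n+1}=c\}$; your translation by $-c_i\,\b e_{n+1}$ still does the job since it shifts the coordinate sum by $-c_i$.)

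For $(1)\Rightarrow(2)$ your route is genuinely different. The paper appeals to McMullen's criterion \cite[Theorem~2]{McMullen1987}: it checks that every facet of $\pol[Ind]_\pol$ contains a vertex of the indecomposable face $\pol-\b t$, and concludes indecomposability directly. You instead decompose $\pol[Ind]_\pol=\polS+\polT$, read off the induced decompositions of the top and bottom faces, use indecomposability of $\pol$ on the top and triviality on the bottom, and then use the quotient inequalities (together with $\min_{\b x\in\pol-\b t}x_i=0$) to force the two translation vectors to coincide; \Cref{lem:GeneralizedPolymatroisUniquelyDetermined} and the linearity of the GP-sum finish the identification $\polS=\lambda\,\pol[Ind]_\pol+\b a$. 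Your argument is entirely internal to the machinery of \Cref{sec:AdmissCone} and avoids the external citation; the paper's argument is shorter and more geometric, and the facet analysis it performs (every facet of $\pol[Ind]_\pol$ meets $\pol-\b t$) is of independent interest. Both are clean, and your ``quotient-pinning'' step nicely anticipates the admissibility-cone viewpoint used later in the paper.
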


\begin{proof}
By definition, if $\pol$ is a deformed $n$-permutahedron, then $\pol[Ind]_{\pol}$ is a generalized $n$-polymatroid, and $\,\lift{\!(\pol[Ind]_{\pol})}$ is a deformed $(n+1)$-permutahedron.
We need to prove that (in)decomposability carries over: by the construction of lifts, $\,\lift{\!(\pol[Ind]_{\pol})}$ is indecomposable if and only if $\pol[Ind]_{\pol}$ is indecomposable, since indecomposability is invariant under linear isomorphism.

If $\pol = \polQ + \polR$ where $\polQ$ is not a translated dilate of $\pol$, then by \Cref{lem:IndependencePolytopeSum}, we have the equality $\pol[Ind]_{\pol} = \pol[Ind]_{\polQ} + \pol[Ind]_{\polR}$ where $\pol[Ind]_{\polQ}$ is not a translated dilate of $\pol[Ind]_{\pol}$ (because the first has a face $\polQ$ while the second has a face $\pol$).
Thus, if $\pol$ is decomposable, then $\pol[Ind]_{\pol}$ is decomposable too.

Now, let $\b t\in \R^n$ with $t_i = \min_{\b x\in \pol} x_i$.
All facets of $\pol[Ind]_{\pol}$ contain a vertex of $\pol-\b t$: indeed, a facet of $\pol[Ind]_{\pol}$ is either $\pol-\b t$ itself, or a facet of $(\pol-\b t)+\R_-^n$ (intersected with $\b 0 + \R^n_+$), or a facet of $\b 0+\R_+^n$ (intersected with $(\pol-\b t) + \R^n_-$).
Those of the two first kinds clearly contain a vertex of $\pol-\b t$, while the latter are supported by some hyperplane $\{\b x\in \R^n ~;~ x_i = 0\}$ for some $i\in [n]$: this facet contains the vertex $\b p - \b t$ of $\pol - \b t$ for any $\b p$ with $p_i = \min_{\b x\in\pol}x_i$, by definition of $\b t$.

By McMullen's criterion \cite[Theorem~2]{McMullen1987} (see also \cite{PadrolPoullot-NewCriteriaIndecomposability} for more details), if a polytope admits an indecomposable face which shares a vertex with all its facets, then this polytope is indecomposable.
Consequently, if $\pol$ is indecomposable, then $\pol[Ind]_{\pol}$ is indecomposable.
\end{proof}

\begin{remark}
In the case of a matroid $\c M$, the polytope $\pol_{\c M}$ is indecomposable if and only if up to loops and coloops $\c M$ is connected (\ie cannot be written as a direct sum of matroids), see \cite[Section 10]{Nguyen1986-SemimodularFunctions} or \cite[Section 3.1]{PadrolPoullot-NewCriteriaIndecomposability}.
By \Cref{thm:IndependencePolytopesEasyVersion}, the independence polytope of $\c M$ is indecomposable if and only if $\c M$ connected.
\end{remark}

\Cref{thm:IndependencePolytopesEasyVersion} shows that the map $\pol\mapsto \,\lift{\!(\pol[Ind]_{\pol})}$ sends injectively any ray of $\SC$ to a ray of $\SC[n+1]$ (injectivity holds since $\pol = \pol[Ind]_{\pol}^{+\b 1}$).
Thus, the number of rays of $\SC$ grows with $n$.
As we want to prove that this growth is at least doubly exponential, we will need to prove that (almost) any \textbf{pair} of rays of $\SC$ can be mapped injectively to a ray of $\SC[n+1]$.
This is the aim of the next section.

\subsection{Inductive construction of rays}\label{ssec:IndependencePolytopes}
In this section, we generalize the independence polytopes to construct new indecomposable deformed permutahedra. To this end, throughout the section we suppose that $\pol$ and $\polQ$ are indecomposable deformed permutahedra which are not points.

In this case, their admissibility cone $\admisscone$ has a particularly simple structure. Since all deformations of an indecomposable polytope are dilated translates, one obtains:
\begin{align}
\admisscone &= \left\{\GPsum[\left(\mu \polQ + \b v\right)][\left(\lambda \pol + \b u\right)] ~;~ \begin{array}{l}
\lambda, \mu \geq 0 \\
\b u, \b v \in \R^n
\end{array},\,\forall \sigma \in \c S_n \  \forall i \in [n]: \left(\mu \polQ + \b v\right)^\sigma_i \leq \left(\lambda \pol + \b u\right)^\sigma_i \right\}\nonumber\\
& \simeq  \bigl\{(\lambda, \mu, \b u, \b v) \,;\, \lambda, \mu \geq 0,\, \b u, \b v \in \R^n,\, \forall \sigma \in \c S_n \ \forall i \in [n]: \mu\polQ^\sigma_i + v_i \leq \lambda\pol^\sigma_i + u_i \bigr\} 
\label{eq:admisscone}
\end{align}
where $\simeq$ denotes linear isomorphism. For this statement, we used the following lemma: 

\begin{lemma}
The map $(\lambda, \mu, \b u, \b v) \mapsto \GPsum[(\mu \polQ + \b v)][(\lambda \pol + \b u)]$ is linear.
\end{lemma}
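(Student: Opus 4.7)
The plan is to factor the map in question as a composition $\Psi\circ\Phi$ and appeal to \Cref{lem:GPsumIsLinear}. I set
$$\Phi\colon(\lambda,\mu,\b u,\b v)\longmapsto\bigl(\mu\polQ+\b v,\ \lambda\pol+\b u\bigr),$$
taking values in pairs of (virtual) polytopes, and let $\Psi\colon(\polQ',\pol')\mapsto\GPsum[\polQ'][\pol']$ denote the GP-sum map on pairs forming a quotient. The linearity of the target map will then follow once each factor is shown to be linear and their composition is shown to be well-defined on the relevant cone.

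The linearity of $\Phi$ is a routine check in the Minkowski algebra: using $\alpha\pol+\beta\pol=(\alpha+\beta)\pol$ for $\alpha,\beta\geq 0$ together with the fact that Minkowski sum with a singleton is a translation, one obtains $\alpha_1(\lambda_1\pol+\b u_1)+\alpha_2(\lambda_2\pol+\b u_2)=(\alpha_1\lambda_1+\alpha_2\lambda_2)\pol+(\alpha_1\b u_1+\alpha_2\b u_2)$, and symmetrically in the $\polQ$-coordinate. The linearity of $\Psi$ on pairs forming a quotient is exactly the content of \Cref{lem:GPsumIsLinear}.

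To conclude I would verify that the composition is well-defined on a cone stable under the relevant operations. The condition for $\Phi(\lambda,\mu,\b u,\b v)$ to lie in the domain of $\Psi$ is $\quotientQP[(\lambda\pol+\b u)][(\mu\polQ+\b v)]$, which unfolds via $(\lambda\pol+\b u)^\sigma_i=\lambda\pol^\sigma_i+u_i$ into the homogeneous linear inequalities $\mu\polQ^\sigma_i+v_i\leq\lambda\pol^\sigma_i+u_i$ for all $\sigma\in\c S_n$ and $i\in[n]$. These inequalities cut out a polyhedral cone stable under nonnegative linear combinations, on which the composition $\Psi\circ\Phi$ inherits linearity (in the cone sense: additivity on parameter tuples whose sum remains admissible, plus nonnegative scaling compatibility). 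No serious obstacle is anticipated, as the argument is essentially a compatibility check between two linear structures already established.
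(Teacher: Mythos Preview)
Your proposal is correct and follows essentially the same approach as the paper: factor the map as the composition of $(\lambda,\mu,\b u,\b v)\mapsto(\lambda\pol+\b u,\,\mu\polQ+\b v)$ with the GP-sum map, and invoke \Cref{lem:GPsumIsLinear} for the linearity of the latter. The paper's proof is a one-line version of exactly this decomposition; your additional remarks on well-definedness of the composition over the admissibility cone are sound but not spelled out in the original.
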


\begin{proof}
The maps $(\lambda,\mu, \b u,\b v) \mapsto (\lambda \pol+\b u,\, \mu\polQ+\b v)$,
and $(\pol, \polQ) \mapsto \GPsum$ are linear (\Cref{lem:GPsumIsLinear}).
\end{proof}

We can simplify the description of the admissibility cone further, emphasizing its lineality space.
For fixed $\pol,\polQ$, we define the map $\mathdefn{\b \phi }\colon \R^2 \to \R^n$ with \defn{$\b \phi(\lambda, \mu)_i$} $ \coloneqq \min_{\sigma \in \c S_n}\left(\lambda \pol^\sigma_i - \mu \polQ^\sigma_i\right)$, and set:
\begin{align*}
\mathdefn{\Iadmisscone} &\coloneqq
\left\{(\lambda, \mu, \b t) \,;\, \lambda, \mu \geq 0, \text{ and }\forall \sigma \in \c S_n \  \forall i \in [n],\quad t_i \leq \lambda \pol^\sigma_i - \mu \polQ^\sigma_i \right\}\\
&=
\left\{(\lambda, \mu, \b t) \,;\, \lambda, \mu \geq 0, \text{ and }\forall i \in [n],\quad t_i \leq \b \phi(\lambda, \mu)_i \right\}.
\end{align*}

Then, setting $\b t = \b v - \b u$ in \eqref{eq:admisscone}, we get:
\begin{align*}
\admisscone & \simeq  \Iadmisscone  \,\oplus\, \R^n.
\end{align*}

Note that $\dim\admisscone = 2n + 2 = \dim\DefoCone + \dim\DefoCone[\polQ]$. %
Moreover, $\admisscone$ has a lineality space of dimension $n$, as $(\GPsum) \subseteq \R^n$.
Hence $\Iadmisscone$ is a pointed polyhedral cone.
We are interested in the rays of $\Iadmisscone$ for which $\lambda \ne 0$ and $\mu \ne 0$ (\ie both are strictly positive).

\begin{definition}\label{def:Seed}
Let $\pol$ and $\polQ$ be indecomposable deformed $n$-permutahedra. A couple $(\lambda, \mu)$ with $\lambda, \mu > 0$ is a \defn{seed} for $(\pol,\polQ)$ if there exists an index $j \in [n]$ and\footnote{The notation ``$\argmin_{x\in X} f(x)$'' designates the \underline{set} of minimizers of $f$ in $X$.} two different permutations $\tau, \tau' \in \argmin_{\sigma \in \c S_n}\left(\lambda \pol^\sigma_j - \mu \polQ^\sigma_j\right)$ satisfying $(\pol^\tau_j, \polQ^\tau_j) \ne (\pol^{\tau'}_j, \polQ^{\tau'}_j)$.
\end{definition}

\begin{remark}\label{rmk:SeedOperations}
For two deformed permutahedra $\pol, \polQ$, if $(\lambda,\mu)$ is a seed for the pair of indecomposable deformed $n$-permutahedra $(\pol,\polQ)$, and  $\pol' = \alpha\pol+\b t$ and $\polQ' = \beta\polQ + \b u$, for some $\alpha, \beta > 0$ and $\b t,\b u\in \R^n$, then $(\frac{\lambda}{\alpha}, \frac{\mu}{\beta})$ is a seed for $(\pol', \polQ')$.
\end{remark}

We will first showcase in \Cref{prop:SeedIsRay} (and \Cref{exm:FertilitySmallExample}) what seeds are useful for, then give an easy-to-check criterion on the pair $(\pol, \polQ)$ for the (in)existence of seeds in \Cref{thm:NotFertile}.

\begin{proposition}\label{prop:SeedIsRay}
Let $\pol$ and $\polQ$ be indecomposable generalized permutahedra and $\lambda, \mu > 0$.
If $(\lambda, \mu)$ is a seed, then 
$\GPsum[\bigl(\mu\polQ + \b\phi(\lambda,\mu)\bigr)][\bigl(\lambda\pol\bigr)]$ is a ray of $\admisscone$ (whose top and bottom faces are normally equivalent to $\pol$ and~$\polQ$, respectively); that is, $\GPsum[\bigl(\mu\polQ + \b\phi(\lambda,\mu)\bigr)][\bigl(\lambda\pol\bigr)]$  is indecomposable.
\end{proposition}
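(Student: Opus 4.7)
The plan is to reduce the statement to a dimension count via the presentation $\admisscone \simeq \Iadmisscone \oplus \R^n$ established just before \cref{def:Seed}. Since $\Iadmisscone$ is a pointed polyhedral cone in $\R^{n+2}$ with coordinates $(\lambda,\mu,\b t)$, rays of $\admisscone$ correspond bijectively to $1$-dimensional faces of $\Iadmisscone$, and the polytope $\GPsum[(\mu\polQ + \b\phi(\lambda,\mu))][(\lambda\pol)]$ is represented by the point $(\lambda,\mu,\b\phi(\lambda,\mu))$. The task therefore reduces to showing that the minimal face of $\Iadmisscone$ containing this point has dimension $1$, equivalently that the tight facet-defining inequalities at this point span a subspace of codimension $1$ in $(\R^{n+2})^\ast$.

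For each $i\in[n]$ and each $\sigma\in\argmin_\tau(\lambda\pol^\tau_i - \mu\polQ^\tau_i)$, the inequality $t_i \leq \lambda\pol^\sigma_i - \mu\polQ^\sigma_i$ is tight at $(\lambda,\mu,\b\phi(\lambda,\mu))$; I denote the corresponding linear form by $L^{i,\sigma}(\lambda,\mu,\b t) := -\pol^\sigma_i\lambda + \polQ^\sigma_i\mu + t_i$. Selecting one such $\sigma_i$ for each $i$, the $n$ forms $L^{i,\sigma_i}$ are linearly independent since their $\b t$-parts form the standard basis of $(\R^n)^\ast$. To reach rank $n+1$, the seed hypothesis supplies an index $j\in[n]$ and two permutations $\tau,\tau'\in\argmin_\sigma(\lambda\pol^\sigma_j-\mu\polQ^\sigma_j)$ with $(\pol^\tau_j,\polQ^\tau_j)\neq(\pol^{\tau'}_j,\polQ^{\tau'}_j)$, so that the tight linear form
\[
L^{j,\tau} - L^{j,\tau'} \;=\; -\bigl(\pol^\tau_j - \pol^{\tau'}_j\bigr)\lambda \;+\; \bigl(\polQ^\tau_j - \polQ^{\tau'}_j\bigr)\mu
\]
is nonzero and has vanishing $\b t$-component. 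It therefore cannot lie in the span of the $L^{i,\sigma_i}$, whose projections onto the $\b t$-coordinates form a basis of $(\R^n)^\ast$, yielding the required $(n+1)$-st independent tight form.

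Since $\lambda,\mu>0$, the top face $\lambda\pol$ and the bottom face $\mu\polQ+\b\phi(\lambda,\mu)$ of the GP-sum are a positive dilate and a dilated translate of $\pol$ and $\polQ$, hence normally equivalent to them. Indecomposability then follows from \cref{thm:AdmissibilityConeIsDeformationCone}: under the generic quotient hypothesis implicit in the definition of $\admisscone$, we have $\admisscone = \DefoCone[\GPsum]$, so the face of $\admisscone$ through our polytope coincides with its own deformation cone; being a ray of $\admisscone$ therefore means this deformation cone has dimension exactly one more than its lineality, which is the polytopal characterization of Minkowski indecomposability.

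The main subtlety is to arrange the independence argument so that the seed hypothesis is the unique input that upgrades the rank from $n$ to $n+1$. Here it is essential that the \emph{value} condition $(\pol^\tau_j,\polQ^\tau_j)\neq(\pol^{\tau'}_j,\polQ^{\tau'}_j)$ is imposed rather than the mere distinctness $\tau\neq\tau'$: without it the two tight forms $L^{j,\tau}$ and $L^{j,\tau'}$ would coincide and no new linear relation would be produced.
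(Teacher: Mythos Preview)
Your proof is correct and follows essentially the same strategy as the paper's: both reduce to showing that the point $(\lambda,\mu,\b\phi(\lambda,\mu))\in\Iadmisscone$ lies on $n+1$ linearly independent tight hyperplanes, and both conclude indecomposability via \cref{thm:AdmissibilityConeIsDeformationCone}. The only cosmetic difference is that the paper selects $n-1$ tight forms for $i\neq j$ and two for $i=j$, whereas you select one tight form for every $i$ and then argue that the difference $L^{j,\tau}-L^{j,\tau'}$ (which lies in the span of tight forms and has vanishing $\b t$-part) provides the $(n{+}1)$-st independent direction; these are equivalent countings.
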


\begin{proof}
This is equivalent to showing that $\bigl(\lambda, \mu, \b\phi(\lambda, \mu)\bigr)$ is a ray of $\Iadmisscone$.
Firstly, by definition of $\b\phi$, note that $\bigl(\lambda, \mu, \b\phi(\lambda, \mu)\bigr)\in\Iadmisscone$.
Moreover, if $\lambda\ne 0$, then $\bigl(\lambda, \mu, \b\phi(\lambda, \mu)\bigr) \ne \b 0$.

As $(\lambda, \mu)$ is a seed, let $j\in [n]$ be such that
$\tau, \tau' \in \argmin_{\sigma \in \c S_n}\left(\lambda \pol^\sigma_j - \mu \polQ^\sigma_j\right)$ with $\tau\neq \tau'$ (which implies $\lambda(\pol^{\tau'}_j - \pol^{\tau}_j) = \mu(\polQ^{\tau'}_j - \polQ^{\tau}_j)$), with $(\pol^\tau_j, \polQ^\tau_j) \ne (\pol^{\tau'}_j, \polQ^{\tau'}_j)$. 
Moreover, as $\lambda\ne0$ and $\mu\ne0$, having $(\pol^\tau_j, \polQ^\tau_j) \ne (\pol^{\tau'}_j, \polQ^{\tau'}_j)$ implies both $\pol^\tau_j \ne \pol^{\tau'}_j$ and $\polQ^\tau_j \ne \polQ^{\tau'}_j$.
We get that $\mu = \lambda \frac{\pol^{\tau'}_j - \pol^{\tau}_j}{\polQ^{\tau'}_j - \polQ^{\tau}_j}$.
Thus, $\b r_\lambda \coloneqq\bigl(\lambda, \mu, \b\phi(\lambda, \mu)\bigr)$ is fully determined by $\lambda$.
We now prove that $\b r_\lambda$ satisfies $n+1$ independent equalities among the ones defining $\Iadmisscone$.
First, recall that $\Iadmisscone =
\left\{(\lambda, \mu, \b t) \,;\, \lambda, \mu \geq 0,\, \forall \sigma \in \c S_n, \forall i\in [n],\, t_i \leq \lambda\pol^\sigma_i - \mu\polQ^\sigma_i\right\}$.
For any $\lambda > 0$ and above-defined $\mu$, for each $i\ne j$, let $\sigma_i\in S_n$ be such that $\lambda\pol_i^{\sigma_i}-\mu\polQ_i^{\sigma_i} = \min_{\sigma \in \c S_n}\left(\lambda \pol^\sigma_i - \mu \polQ^\sigma_i\right)$.
The vector $\b r_\lambda$ satisfies the $n-1$ inequalities of the form ``$t_i \leq \lambda\pol_i^{\sigma_i}-\mu\polQ_i^{\sigma_i}$'' with equality, as well as the $2$ inequalities ``$t_j \leq \lambda\pol_j^{\tau}-\mu\polQ_j^{\tau}$'' and ``$t_j \leq \lambda\pol_j^{\tau'}-\mu\polQ_j^{\tau'}$'' with equality.
As $(\pol^\tau_j, \polQ^\tau_j) \ne (\pol^{\tau'}_j, \polQ^{\tau'}_j)$, these $n+1$ equations in the $n+2$ variables $\b t, \lambda, \mu$ are linearly independent.
Hence $\b r_\lambda$ is a ray of $\Iadmisscone$.

Note that, by \Cref{thm:AdmissibilityConeIsDeformationCone,eq:admisscone}, this implies that $\GPsum[\bigl(\mu\polQ + \b\phi(\lambda,\mu)\bigr)][\bigl(\lambda\pol\bigr)]$ is a ray of a deformation cone, and hence indecomposable.
Finally, as $\lambda, \mu> 0$, the top and bottom faces of $\GPsum[\bigl(\mu\polQ + \b\phi(\lambda,\mu)\bigr)][\bigl(\lambda\pol\bigr)]$ are normally equivalent to $\pol$ and $\polQ$ respectively.
\end{proof}

Our aim is to associate a ray of the $(n+1)$-submodular cone $\SC[n+1]$ to a pair of rays of the $n$-submodular cone $\SC$.
That is precisely the role of seeds:
If a pair of indecomposable deformed $n$-permutahedra $(\pol, \polQ)´$ admits a seed $(\lambda, \mu)$, then we can create indecomposable deformed $(n+1)$-permutahedra by lifting the generalized $n$-polymatroid $\GPsum[\bigl(\mu\polQ + \b\phi(\lambda,\mu)\bigr)][\bigl(\lambda\pol\bigr)]$ to $\R^{n+1}$.

However, one technical problem remains:
Do seeds exist?
\Cref{exm:FertilitySmallExample} shows that it is not always the case; but we will prove that they do exist for enough pairs.

\begin{definition}\label{def:FertilePair}
For two indecomposable deformed $n$-permutahedra $\pol, \polQ$, their \defn{seed-set} is defined as $\Lambda(\pol, \polQ) \coloneqq \bigl\{(\lambda, \mu) \,;\, \lambda, \mu > 0 \,\text{and}\, (\lambda, \mu) \,\text{is a seed of}\, (\pol, \polQ)\bigr\}$.
The pair $(\pol, \polQ)$ is \defn{fertile} if $\Lambda(\pol, \polQ) \ne \emptyset$.%

Note that $(\pol, \pol)$ is always fertile, as $(1, 1)$ is a seed, with $\b\phi(1, 1) = \b 0$.
Note also that the fact that $(\pol, \polQ)$ is fertile \textbf{does not} imply that $(\polQ, \pol)$ is fertile, see \Cref{exm:FertilitySmallExample}.

\end{definition}

\begin{remark}\label{rmk:fertile_translationdilation}
By \Cref{rmk:SeedOperations}, fertility is preserved under translations and dilations.
\end{remark}

\begin{example}\label{exm:FertilitySmallExample}
Up to dilation and translation, there are 5 indecomposable deformed $3$-permutahedra, namely 3 segments and 2 triangles:
\begin{align*}
\polS_1 &= \conv\bigl(\b 0,\, (1, -1, 0)\bigr);\qquad  \polS_2 = \conv\bigl(\b 0,\, (0, 1, -1)\bigr);\qquad  \polS_3 = \conv\bigl(\b 0,\, (1, 0, -1)\bigr)\\
\polT_1 &= \conv\bigl((1, 0, 0),\, (0, 1, 0),\, (0, 0, 1)\bigr) ;\qquad  \polT_2 = \conv\bigl((-1, 0, 0),\, (0, -1, 0),\, (0, 0, -1)\bigr)
\end{align*}
See \Cref{fig:S1T1braidFan} for an illustration of $\polT_1$ and $\polS_1$.

\begin{figure}
	\centering
	\includegraphics[scale=1]{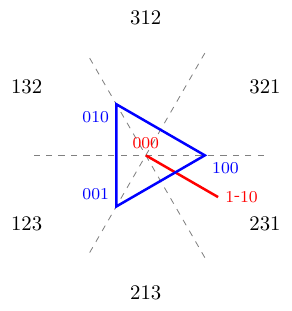}
	\caption[Triangle $\polT_1$ and segment $\polS_1$ in the braid fan]{The triangle $\polT_1$ in \textcolor{blue}{blue}, and the segment $\polS_1$ in \textcolor{red}{red}. Both lie in $\R^3$, but as they belong to the same plane, we picture it in 2-dimensions.
		In \textcolor{blue}{blue} and \textcolor{red}{red} are the coordinates of the vertices. In black is depicted the braid fan: the dashed lines are the hyperplanes $\{\b x\in \R^3 ~;~ x_i = x_j\}$ for $i\ne j$ (intersected with our plane of embedding), and the each maximal region is a cone $\polC_\sigma$ labeled by its corresponding permutation $\sigma\in \c S_3$.}
	\label{fig:S1T1braidFan}
\end{figure}

\begin{figure}[b]
	\begin{center}
		\begin{tikzpicture}[decoration={markings, mark=at position 0.5 with {\draw[very thick, red] (-3pt,-3pt) -- (0,0);\draw[very thick, red] (-3pt,3pt) -- (0,0);}}]
			
			\coordinate (t1) at (0, 0);
			\coordinate (t2) at (3, 0);
			\coordinate (s1) at (1.5, 1);
			\coordinate (s2) at (1.5, 2);
			\coordinate (s3) at (1.5, 3);
			
			\draw[postaction={decorate}] (t2) -- (t1);
			\draw[postaction={decorate}] (t2) -- (s1);
			\draw[postaction={decorate}] (s1) -- (t1);
			\draw[postaction={decorate}] (t2) -- (s2);
			\draw[postaction={decorate}] (s2) -- (t1);
			\draw[postaction={decorate}] (t2) -- (s3);
			\draw[postaction={decorate}] (s3) -- (t1);
			
			\draw (t1) node{$\bullet$};
			\draw (t2) node{$\bullet$};
			\draw (s1) node{$\bullet$};
			\draw (s2) node{$\bullet$};
			\draw (s3) node{$\bullet$};
			
			\draw (t1) node[left]{$\polT_1$};
			\draw (t2) node[right]{$\polT_2$};
			\draw (s1) node[above]{$\polS_1$};
			\draw (s2) node[above]{$\polS_2$};
			\draw (s3) node[above]{$\polS_3$};
		\end{tikzpicture}
	\end{center}
	\caption{Fertile pairs among indecomposable deformed $3$-permutahedra}\label{fig:3fertility}
\end{figure}
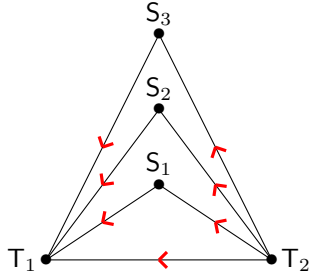

\Cref{fig:3fertility} displays a directed graph in which there is a directed edge $\pol-\!\!\!-\!\!\!\!\!\textcolor{red}{\textbf{>}}\!\!\!-\!\!\!\!-\polQ$ if and only if the pair $(\pol, \polQ)$ is fertile.
	For any~$\pol$, the pair $(\pol, \pol)$ is fertile, so our graph lacks loops on its vertices which we did not display for sake of readability.
	In this example, there is no $\pol\neq  \polQ$ such that both $(\pol, \polQ)$ and $(\polQ, \pol)$ are fertile: \Cref{exm:FamillyOfPairwiseFertileN4} will present the opposite phenomenon.
	
	 For each one of these fertile pairs $(\pol, \polQ)$, we can compute $\Lambda(\pol, \polQ)$.
	Firstly, for each $i\in [3]$, we compute the $6$ functions $f_{i,\sigma}: \lambda\mapsto \lambda\pol^\sigma_i - \polQ^\sigma_i$.
	Recall that the $i^{\text{th}}$-coordinate of $\b \phi(\lambda, 1)$ is the minimum $\min_{\sigma\in \c S_3}f_{i, \sigma}(\lambda)$: hence, a pair $(\lambda_\circ, 1)$ is a seed if and only if $\lambda_\circ$ is the abscissa of an intersection point of these affine functions $f_{i, \sigma}$ whose cordinate achieves the minimum of $\{f_{i,\sigma}(\lambda_\circ) ~;~ \sigma\in \c S_3\}$.
	For example, these intersection points are easy to spot on \Cref{fig:Hypograph} (they all have abscissa 1).
	A pair $(\lambda, \mu)$ is a seed of $(\pol, \polQ)$ if and only if $(\frac{\lambda}{\mu}, 1)$ is a seed.

For all the fertile pairs $(\pol, \polQ)\in \SC[3]\times\SC[3]$, there is a unique seed up to dilation: $(\lambda, \mu) = (1, 1)$, \ie the seed-set is equal to $\Lambda(\pol, \polQ) = \{(\lambda, \lambda) \,;\, \lambda > 0\}$.
Hence, we can look at the indecomposable deformed $4$-permutahedra $\liftGPsum[\bigl(\polQ + \b\phi(1, 1)\bigr)][\pol]$, where:

\begin{center}
\begin{tabular}{lc|ccccc}
&& \multicolumn{5}{c}{$\polQ$} \\
& $\b\phi(1, 1)$ & $\polS_1$ & $\polS_2$ & $\polS_3$ & $\polT_1$ & $\polT_2$\\ \hline
&$\polS_1$ & $\b0$ & & & $(0, -1, -1)$ & \\ 
&$\polS_2$ & & $\b0$ & & $(0, -1, -1)$ & \\ 
$\pol$&$\polS_3$ & & & $\b0$ & $(-1, 0, -1)$ & \\
&$\polT_1$ & & & & $\b0$ & \\ 
&$\polT_2$ & $(-1, 0, -1)$ & $(-1, -1, 0)$ & $(-1, -1, 0)$ & $(-1, -1, -1)$ & $\b0$ \\ 
\end{tabular}%
\end{center}
\smallskip 

Firstly, we have directly, $\liftGPsum[\pol] = \pol \times\{0\}$, which are segments and triangles.
Besides:

\noindent$\liftGPsum[\bigr(\polT_1 + (0, -1, -1)\bigl)][\polS_1] = \conv\Bigl(
(0, -1, 0, 1),
(0, 0, -1, 1),
(0, 0, 0, 0),
(1, -1, -1, 1),
(1, -1, 0, 0)\Bigr)$ is a square pyramid.

\noindent$\liftGPsum[\bigr(\polT_1 + (-1, -1, -1)\bigl)][\polT_2] = \conv\Bigl(
(-1, -1, 0, 2),
(-1, 0, -1, 2),
(-1, 0, 0, 1),
(0, -1, -1, 2),$\linebreak$(0, -1, 0, 1), (0, 0, -1, 1)\Bigr)$ is an octahedron.

All other fertile pairs give square pyramids which are rotations of $\liftGPsum[\bigr(\polT_1 + (0, -1, -1)\bigl)][\polS_1]$.

\begin{figure}
\centering
\includegraphics[width=0.99\linewidth]{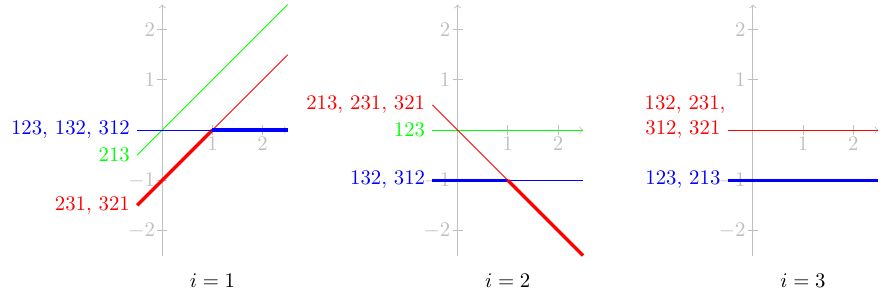}
\caption[Seeds correspond to abscissa of bottommost intersection points]{Computation of the seed set and the corresponding values of $\b \phi(\lambda, \mu)$ for the pair $(\polS_1, \polT_1)$: a seed is a pair $(\lambda_\circ, 1)$ where $\lambda_\circ$ is the abscissa of an intersection point with smallest corresponding ordinate.}
\label{fig:Hypograph}
\end{figure}

\end{example}

To prove that we have ``a lot'' of fertile pairs, we first prove that fertility is hereditary, and we will show in \Cref{exm:FamillyOfPairwiseFertileN4,exm:FamillyOfPairwiseFertileN5} that fertility is well-spread among pairs of deformed $n$-permutahedra for $n = 4$ and $n = 5$.

\begin{theorem}\label{thm:FertilityIsHeriditary}
Let $\polS, \pol$ be indecomposable generalized permutahedra, and $\polQ,\polR\in\SC$ be quotients $\polQ\isquotient\pol$ and $\polS\isquotient\polR$ such that the lifted GP-sums $\liftGPsum[\polS][\polR]$ and $\liftGPsum$ are indecomposable. If the pair $(\polS, \pol)$ is fertile, then the pair $\bigl(\liftGPsum[\polS][\polR],\, \liftGPsum\bigr)\in\SC[n+1]^2$ is also fertile.
\end{theorem}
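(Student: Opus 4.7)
Plan. I would begin by unpacking the fertility of $(\polS, \pol)$: pick $(\lambda, \mu)$ with $\lambda, \mu > 0$, an index $j \in [n]$, and distinct permutations $\tau, \tau' \in \c S_n$ with $\tau, \tau' \in \argmin_\sigma (\lambda \polS^\sigma_j - \mu \pol^\sigma_j)$ and $(\polS^\tau_j, \pol^\tau_j) \neq (\polS^{\tau'}_j, \pol^{\tau'}_j)$. The aim is to produce a seed of $(\liftGPsum[\polS][\polR], \liftGPsum)$ at the same index $j \in [n]$, with witnesses $\sigma, \sigma' \in \c S_{n+1}$ obtained from $\tau, \tau'$ by inserting $n+1$ at a carefully chosen position.

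The key structural step is to describe $(\liftGPsum[\polS][\polR])^\rho$ and $(\liftGPsum)^\rho$ via the parametric simplex paths inside $\GPsum[\polS][\polR]$ and $\GPsum$. Writing $\bar\rho \in \c S_n$ for the restriction of $\rho \in \c S_{n+1}$ and $k_0 = \rho^{-1}(n+1)$, the paths from top to bottom proceed by swapping $n+1$ with $\bar\rho(1), \bar\rho(2), \dots$ in this order, so the projection of $(\liftGPsum[\polS][\polR])^\rho$ has its $i$-th coordinate equal to $\polR^{\bar\rho}_i$ if $\bar\rho^{-1}(i) \geq k_0$ and $\polS^{\bar\rho}_i$ otherwise. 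Crucially, the swap order depends only on $\bar\rho$, not on the underlying generalized polymatroid, so the identical disjunction governs whether $(\liftGPsum)^\rho_i$ equals $\pol^{\bar\rho}_i$ or $\polQ^{\bar\rho}_i$. Consequently, the objective $\lambda (\liftGPsum[\polS][\polR])^\rho_j - \mu (\liftGPsum)^\rho_j$ takes, as $\rho$ ranges over $\c S_{n+1}$, only two kinds of values per restriction $\bar\rho$: type~A $= \lambda \polS^{\bar\rho}_j - \mu \polQ^{\bar\rho}_j$ (when $k_0 > \bar\rho^{-1}(j)$) or type~B $= \lambda \polR^{\bar\rho}_j - \mu \pol^{\bar\rho}_j$ (when $k_0 \leq \bar\rho^{-1}(j)$), and both choices are achievable by adjusting $k_0$.

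Using $\polS \isquotient \polR$ and $\polQ \isquotient \pol$, both types satisfy $\text{value} \geq \lambda \polS^{\bar\rho}_j - \mu \pol^{\bar\rho}_j$, which is $\geq m := \lambda \polS^\tau_j - \mu \pol^\tau_j$ by the seed property of $(\polS, \pol)$. So the lifted objective is uniformly bounded below by $m$. The main obstacle — and the step where most care is required — is to actually attain this infimum at two distinct permutations of $\c S_{n+1}$ giving distinct coordinate pairs. My plan is to use the freedom in the insertion position $k_0$ independently for $\tau$ and $\tau'$: for each of them, choose $k_0$ to switch between types~A and~B so as to bring the corresponding value down to $m$, tuning $(\lambda, \mu)$ within its seed-ray if needed to align the two witnesses (the adjustment is permitted since fertility is preserved under dilation by \Cref{rmk:fertile_translationdilation} and the seed condition only fixes $(\lambda, \mu)$ up to rescaling). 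The distinctness of coordinate pairs in the lifted pair is inherited from $(\polS^\tau_j, \pol^\tau_j) \neq (\polS^{\tau'}_j, \pol^{\tau'}_j)$: because $\polS \neq \polR$ and $\polQ \neq \pol$ strictly (forced by the indecomposability of $\liftGPsum[\polS][\polR]$ and $\liftGPsum$ together with $\genquotientQP$), the type~A coordinate families $(\polS^{\bar\rho}_j, \polQ^{\bar\rho}_j)$ and type~B families $(\polR^{\bar\rho}_j, \pol^{\bar\rho}_j)$ remain distinct and cannot accidentally collapse, yielding the required fertile seed.
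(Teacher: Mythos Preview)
Your structural description of the vertices is correct: for $\rho\in\c S_{n+1}$ with restriction $\bar\rho\in\c S_n$ and $k_0=\rho^{-1}(n+1)$, one indeed has $(\liftGPsum[\polS][\polR])^\rho_i=\polS^{\bar\rho}_i$ when $\bar\rho^{-1}(i)<k_0$ and $=\polR^{\bar\rho}_i$ when $\bar\rho^{-1}(i)\geq k_0$, and the same dichotomy governs $(\liftGPsum)^\rho_i$ with $\polQ,\pol$ in place of $\polS,\polR$. Your lower bound $\lambda(\liftGPsum[\polS][\polR])^\rho_j-\mu(\liftGPsum)^\rho_j\geq m:=\lambda\polS^\tau_j-\mu\pol^\tau_j$ is also correct.

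The gap is exactly at the step you flag as the ``main obstacle''. For $\bar\rho=\tau$, the type~A value is $\lambda\polS^\tau_j-\mu\polQ^\tau_j$ and the type~B value is $\lambda\polR^\tau_j-\mu\pol^\tau_j$; since $\polQ^\tau_j\leq\pol^\tau_j$ and $\polS^\tau_j\leq\polR^\tau_j$, both are $\geq m$, and in the generic situation you yourself invoke they are \emph{strictly} greater. So $m$ is typically not attained at any lift of $\tau$ or $\tau'$, and the actual minimum of the lifted objective lies strictly above $m$, at some $\rho$ whose restriction need not be $\tau$ or $\tau'$. Your proposed remedies do not close this: adjusting $k_0$ only toggles between the two values above, neither of which hits $m$; and rescaling $(\lambda,\mu)$ along its ray leaves all argmins unchanged. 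You therefore give no reason why the lifted minimum is realized at two permutations with distinct coordinate pairs, which is precisely what a seed requires. (Also, the hypothesis is $\polQ\isquotient\pol$, not $\genquotientQP$, so the ``strictly'' clause you appeal to is not available.)

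The paper avoids this difficulty entirely by going contrapositive through a characterization of non-fertility (\Cref{thm:NotFertile}): $(\pol,\polQ)$ is not fertile if and only if for every $i$ some $\tau$ simultaneously minimizes $\pol^\sigma_i$ and maximizes $\polQ^\sigma_i$. Assuming the lifted pair is not fertile yields, for each $i\in[n]$, such a $\tau\in\c S_{n+1}$; an easy lemma (\Cref{lem:MaximizerInLowerDimension}) shows that its restriction $\tau^\downarrow\in\c S_n$ then simultaneously minimizes $\polS^\sigma_i$ and maximizes $\pol^\sigma_i$, so $(\polS,\pol)$ is not fertile. This three-line argument never needs to locate the lifted minimum or exhibit an explicit seed.
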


Before proving this theorem, we need two short lemmas and a theorem.

\begin{lemma}\label{lem:MinForPandQ}
For two deformed $n$-permutahedra $\pol, \polQ$ and $i\in[n]$, there exists $\tau\in \c S_n$ such that $\pol^\tau_i = \max_{\sigma\in \c S_n}\pol^\sigma_i$ and $\polQ^\tau_i = \max_{\sigma\in \c S_n}\polQ^\sigma_i$.
\end{lemma}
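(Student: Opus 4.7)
The plan is to pick a single permutation $\tau$ whose associated braid chamber $\polC_\tau$ contains $\b e_i$ in its closure, and then use the fact that the normal fans of both $\pol$ and $\polQ$ coarsen the braid fan to conclude that both $\pol^\tau$ and $\polQ^\tau$ lie in the face of their respective polytope that maximizes the $i^{\text{th}}$ coordinate.

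More precisely, I would first observe that $\max_{\sigma \in \c S_n} \pol^\sigma_i = \max_{\b x \in \pol} x_i$, and this value is attained exactly on the face $\pol^{\b e_i}$. A vertex $\pol^\sigma$ belongs to this face if and only if $\b e_i \in \c N_\pol(\pol^\sigma)$, and since $\c N_\pol$ coarsens the braid fan we have $\overline{\polC_\sigma} \subseteq \c N_\pol(\pol^\sigma)$. Thus it suffices to find $\tau$ with $\b e_i \in \overline{\polC_\tau}$, and such a $\tau$ will simultaneously work for $\pol$ and $\polQ$.

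The final step is to identify such a $\tau$. Writing out the inequalities defining $\polC_\sigma = \{\b c ~;~ c_{\sigma(1)} \leq \cdots \leq c_{\sigma(n)}\}$ for the vector $\b e_i$ (which has $c_i = 1$ and $c_j = 0$ for $j \neq i$), one sees that $\b e_i \in \overline{\polC_\sigma}$ precisely when $\sigma(n) = i$. So picking any $\tau \in \c S_n$ with $\tau(n) = i$ does the job.

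I do not anticipate any real obstacle here: the proof is essentially a one-line verification once one recognizes that the closure of $\polC_\sigma$ contains $\b e_i$ iff $\sigma$ sends $n$ to $i$. The only subtlety worth flagging is that one must pass to closures of chambers (since $\b e_i$ is never in the interior of any $\polC_\sigma$) and invoke the deformation property $\c N_\pol \deformed \c N_{\permuto}$ to ensure that containment in the closure of a braid chamber suffices to place $\pol^\sigma$ in the desired face.
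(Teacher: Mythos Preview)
Your proposal is correct and follows essentially the same idea as the paper's proof: both argue that $\pol^\tau_i$ is maximal precisely when $\b e_i$ lies in the normal cone of $\pol^\tau$, and then exhibit a single permutation $\tau$ (depending only on $i$, not on the polytope) whose braid chamber contains $\b e_i$, so that this $\tau$ works simultaneously for every deformed $n$-permutahedron. Your write-up is in fact more careful than the paper's---you explicitly note the need to pass to closures and you correctly identify the condition as $\tau(n)=i$, whereas the paper's printed proof contains what appears to be a typo (it writes ``$\tau(1)=i$'' and ``$\min$'').
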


\begin{proof}
We have $\pol^\tau_i = \max_{\sigma\in \c S_n}\pol^\sigma_i$ if and only if $\b e_i$ is in the (outer) normal cone of the vertex~$\pol^\tau$.
Hence, pick any $\tau$ with $\tau(1) = i$, then $\pol^\tau_i = \min_{\sigma\in \c S_n}\pol^\sigma_i$ for any deformed $n$-permutahedron $\pol$ (consequently, there exists $\tau$ maximizing for all deformed $n$-permutahedra at the same time).
\end{proof}

\begin{remark}%
This lemma actually holds in a broader generality:
If $\pol$ and $\polQ$ are two deformations of the same polytope $\pol[A]$, then for any $\b c$, there exists $\b a$ for which $\pol[A]^{\b a}$ is a vertex (implying that $\pol^{\b a}$ and $\polQ^{\b a}$ are also vertices), and such that $\inner{\pol^{\b a}, \b c} = \max_{\b x\in \pol}\inner{\b x, \b c}$ and $\inner{\polQ^{\b a}, \b c} = \max_{\b y\in \polQ}\inner{\b y, \b c}$.
Such $\b a$ can be chosen arbitrarily such that $\b c$ is in the normal cone of the vertex $\pol[A]^{\b a}$.

We will only use the case of deformed $n$-permutahedra in what follows.
\end{remark}

In the next proof, we identify a permutation $\tau\in \c S_{n+1}$ with a total order $\tau(1)<\cdots<\tau(n+1)$, and denote by $\tau^\downarrow\in\c S_ n$ the permutation induced by $\tau$ on $[n]$ by restricting the total order to $[n]$.

\begin{lemma}\label{lem:MaximizerInLowerDimension}
	Suppose $\polQ\isquotient\pol$. For $i\in [n]$ (in particular, $i\ne n+1$), if $\tau\in \c S_{n+1}$ satisfies $(\liftGPsum)^\tau_i = \max_{\sigma\in \c S_{n+1}} (\liftGPsum)^\sigma_i$, then $\pol^{\tau^\downarrow}_i = \max_{\sigma\in \c S_n} \pol^\sigma_i$. Symmetrically, if $\tau\in \c S_{n+1}$ satisfies $(\liftGPsum)^\tau_i = \min_{\sigma\in \c S_{n+1}} (\liftGPsum)^\sigma_i$, then $\polQ^{\tau^\downarrow}_i = \min_{\sigma\in \c S_n} \polQ^\sigma_i$.
\end{lemma}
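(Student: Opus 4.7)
The plan is a parametric optimization argument. Fix any $\b c$ in the interior of $\polC_\tau \subset \R^{n+1}$ and consider the half-line $\b c_\lambda \coloneqq \b c - \lambda\b e_{n+1}$ for $\lambda \geq 0$. At $\lambda = 0$ the maximizer is $(\liftGPsum)^{\b c_0} = (\liftGPsum)^\tau$. For $\lambda$ large, $\b c_\lambda/\lambda$ tends to $-\b e_{n+1}$, forcing the optimizer onto the $-\b e_{n+1}$-face of $\liftGPsum$, namely $\liftmap(\pol)$ (this uses $\pol = \GPsum^{+\b 1}$, which holds by \cref{lem:polymatroidtopbotquotient} together with the hypothesis $\polQ \isquotient \pol$). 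Within $\liftmap(\pol)$, ties are broken by $\b c$, whose effective direction on this hyperplane is $\b c' = (c_j - c_{n+1})_{j\in[n]} \in \R^n$; since $\b c'$ lies in the interior of $\polC_{\tau^\downarrow}$ (because $\tau^\downarrow$ is precisely the order induced by $\tau$ on $[n]$), the endpoint of the path is exactly $\liftmap(\pol^{\tau^\downarrow})$.

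The core of the argument is to track how the $i$-th coordinate evolves along this piecewise-constant path. Each jump corresponds to crossing an edge of $\liftGPsum$, which, being a deformed $(n+1)$-permutahedron, has a direction of the form $\b e_j - \b e_k$ with $j,k \in [n+1]$. For such a jump to become favorable as $\lambda$ grows, the edge vector $\b d$ must satisfy $\langle \b d, -\b e_{n+1}\rangle > 0$, which forces $\b d = \b e_j - \b e_{n+1}$ for some $j \in [n]$. Hence the $i$-th entry of $\b d$ is either $0$ or $1$, so $x_i$ is non-decreasing along the whole path.

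To finish, I observe that $\max_{\b x \in \liftGPsum} x_i = \max_{\b y \in \GPsum} y_i = \max_{\b x \in \pol} x_i$ for $i \in [n]$: the first equality holds because $\liftmap$ preserves the first $n$ coordinates, and the second follows by sandwiching $\pol \subseteq \GPsum \subseteq \pol + \R_-^n$. By hypothesis the starting value $(\liftGPsum)^\tau_i$ equals this common maximum; the monotonicity above then forces the endpoint $\pol^{\tau^\downarrow}_i$ to be $\geq$ this maximum; and since it is trivially $\leq \max_\sigma \pol^\sigma_i$, equality must hold. The symmetric statement for $\polQ$ and the minimum is proved along the same lines with $\b c_\lambda = \b c + \lambda\b e_{n+1}$, which restricts transitions to the directions $\b e_{n+1} - \b e_j$ and hence makes $x_i$ non-increasing, terminating at $\liftmap(\polQ^{\tau^\downarrow})$. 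The most delicate point I expect to need care with is the endpoint identification, specifically checking that $\b c'$ genuinely sits in the interior of $\polC_{\tau^\downarrow}$ and that the optimizer of $\b c_\lambda$ really stabilizes at $\liftmap(\pol^{\tau^\downarrow})$ rather than at some other vertex of $\liftmap(\pol)$.
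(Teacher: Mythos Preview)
Your proof is correct. The parametric-path argument you give is different from the paper's route, which is much terser and essentially static: the paper first observes that $\max_{\sigma\in\c S_{n+1}}(\liftGPsum)^\sigma_i = \max_{\sigma\in\c S_{n+1}}(\lift\pol)^\sigma_i$ (the same sandwich $\pol\subseteq\GPsum\subseteq\pol+\R_-^n$ you use), and then asserts, with only the remark that the $(n+1)$\textsuperscript{th} coordinate is constant on $\lift\pol$, that the conclusion for $\tau^\downarrow$ follows. Your approach instead deforms the objective from $\b c$ to $\b c-\lambda\b e_{n+1}$ and tracks the edge transitions explicitly; this is precisely the device the paper already employs in the proof of \cref{lem:polymatroidtopbotquotient}, so you are reusing a technique native to the paper rather than importing something foreign. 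What you gain is a fully spelled-out justification for the inequality $(\liftGPsum)^\tau_i\leq\pol^{\tau^\downarrow}_i$, at the price of a longer argument. Two small remarks: the reference you give for $\pol=(\GPsum)^{+\b 1}$ is not quite the right one---this comes straight from the definition of $\GPsum$ together with $\quotientQP$ (or from \cref{lem:GeneralizedPolymatroisUniquelyDetermined}), rather than from \cref{lem:polymatroidtopbotquotient}; and your ``effective direction'' $\b c'=(c_j-c_{n+1})_{j\in[n]}$ can be simplified to $(c_1,\dots,c_n)$, since subtracting a constant does not change the braid-cone membership.
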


\begin{proof}

Note that $\sum_{i\in [n]}x_i$ is constant for $\b x\in \polQ,\pol$, and that $\sum_{i\in [n]}x_i\leq \sum_{i\in [n]}y_i$ for any $\b x\in \polQ$ and $\b y\in \pol$ because $\polQ\isquotient\pol$ (since $\pol = (\GPsum)^{+\b 1}$).
Moreover, $\polQ\isquotient\pol$ also implies that $\polQ^\tau_i \leq \pol^\tau_i$ for all $\tau, i$.
Hence, $\max_{\sigma\in \c S_{n+1}} (\liftGPsum)^\sigma_i=\max_{\sigma\in \c S_{n+1}} (\lift{\pol})^\sigma_i$. 
Besides, since the $(n+1)^{\text{th}}$ coordinate is constant over $\lift{\!\pol}$, if $\tau\in \c S_{n+1}$ maximizes $(\liftGPsum)^\tau_i$, then $(\GPsum)^{\tau^\downarrow}_i$  maximizes $(\GPsum)^\tau_i$.
	
The proof for $\polQ$ is analogous.
\end{proof}

\begin{figure}
\centering
\includegraphics[width=0.99\linewidth]{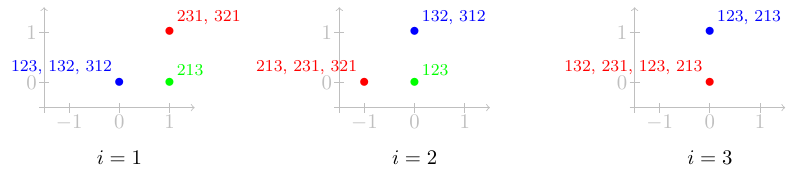}
\caption[Detecting fertility of a pair $(\pol, \polQ)$]{For the pair $(\pol, \polQ) = (\polS_1, \polT_1)$, for each $\sigma\in \c S_3$ and $i\in [3]$, the point $(\pol^\sigma_i, \polQ^\sigma_i)$.
By \Cref{lem:MinForPandQ}, for each $i\in [3]$, there is always a top-right-most and a bottom-left-most point in the drawing.
By \Cref{thm:NotFertile}, the pair $(\pol, \polQ)$ is fertile if and only if there exists $i\in [3]$ such that there is no point on the top-left corner.
Hence $(\pol, \polQ)$ and $(\polQ, \pol)$ are both fertile if and only if, for some $i\in [3]$, the drawing has only two corners points (top-right and bottom-left, but not top-left nor bottom-right).
Here $(\polS_1, \polT_1)$ is fertile but not $(\polT_1, \polS_1)$.}
\label{fig:MinMaxS1T1}
\end{figure}

We finally state our tool for characterizing the existence of seeds, both in practice and in theory (see Example~\ref{ex:fertile} below for an application of \Cref{thm:NotFertile}).

\begin{theorem}\label{thm:NotFertile}
The pair $(\pol, \polQ)$ is not fertile if and only if for all $i\in [n]$, there exists a permutation $\tau\in \c S_n$ such that $\pol^\tau_i = \min_{\sigma\in \c S_n} \pol^\sigma_i$ and $\polQ^\tau_i = \max_{\sigma\in \c S_n} \polQ^\sigma_i$.
\end{theorem}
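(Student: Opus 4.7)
The plan is to fix a coordinate $i \in [n]$ and establish separately the local equivalence: \emph{no $(\lambda, \mu)$ with $\lambda, \mu > 0$ is a seed witnessed by the index $j = i$ if and only if there exists $\tau \in \c S_n$ with $\pol^\tau_i = \min_\sigma \pol^\sigma_i$ and $\polQ^\tau_i = \max_\sigma \polQ^\sigma_i$.} Once this per-coordinate equivalence is established, the theorem is immediate, since $(\pol, \polQ)$ is non-fertile precisely when no index $i$ admits a seed. The natural framework is planar convex geometry: for each $i$, I would introduce the finite set $S_i = \{(\pol^\sigma_i, \polQ^\sigma_i) : \sigma \in \c S_n\} \subset \R^2$ and its convex hull $C_i$. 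Unpacking the definitions, a seed witnessed by the index $i$ at $(\lambda, \mu)$ corresponds exactly to the face of $C_i$ exposed in the direction $(-\lambda, \mu)$ being an edge containing two geometrically distinct points of $S_i$, rather than collapsing to a single point.

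The easy direction (existence of $\tau$ rules out seeds at index $i$) is a direct computation. If $\tau$ achieves both extremes, then for any $(\lambda, \mu)$ with $\lambda, \mu > 0$ and any $\sigma \in \c S_n$,
\[
(\lambda \pol^\sigma_i - \mu \polQ^\sigma_i) - (\lambda \pol^\tau_i - \mu \polQ^\tau_i) \;=\; \lambda(\pol^\sigma_i - \pol^\tau_i) + \mu(\polQ^\tau_i - \polQ^\sigma_i) \;\ge\; 0,
\]
as both summands are nonnegative. Equality forces both summands to vanish (since $\lambda, \mu > 0$), whence $(\pol^\sigma_i, \polQ^\sigma_i) = (\pol^\tau_i, \polQ^\tau_i)$. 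Thus every minimizer of $\lambda\pol_i^\bullet - \mu\polQ_i^\bullet$ coincides geometrically with $\tau$, precluding a seed.

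For the converse I would argue contrapositively. Assume no $\tau$ achieves both extremes simultaneously, and define $v^\top \in S_i$ as the point with smallest $x$-coordinate among those with $y = \max_\sigma \polQ^\sigma_i$, and $v^\ell \in S_i$ as the point with largest $y$-coordinate among those with $x = \min_\sigma \pol^\sigma_i$. Both exist and are vertices of $C_i$ by \cref{lem:MinForPandQ}. The hypothesis forces $v^\top \ne v^\ell$, and in fact $v^\top_1 > v^\ell_1$ together with $v^\top_2 > v^\ell_2$. I would then traverse the boundary of $C_i$ counterclockwise from $v^\top$ to $v^\ell$ (the upper-left arc), which must contain at least one edge of $C_i$. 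The crucial point is that the first edge of this traversal has outward normal strictly in the open second quadrant: the extremal choice of $v^\top$ as the leftmost topmost point forces the next CCW vertex to have strictly smaller $y$-coordinate, so the edge's outward normal has strictly positive $y$-component, and a symmetric argument using $v^\ell$ handles the $x$-component. Any such edge, with outward normal $(-\lambda, \mu)$ for some $\lambda, \mu > 0$, has two geometrically distinct endpoints in $S_i$ that simultaneously minimize $\lambda x - \mu y$, which exactly furnishes a seed.

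The only substantive difficulty is guaranteeing that the outward normal of the relevant edge lies \emph{strictly} inside the open second quadrant, and not on one of the coordinate axes (which would correspond to $\lambda = 0$ or $\mu = 0$ and hence not to a valid seed). This is precisely the role of the extremal-within-extremes definitions of $v^\top$ and $v^\ell$; any less careful choice could align a normal with an axis. The remainder of the argument is routine two-dimensional convex geometry.
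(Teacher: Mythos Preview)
Your approach is correct and is essentially the Legendre-dual of the paper's: where the paper fixes $\mu=1$ and analyzes break-points of the piecewise-affine concave function $\lambda\mapsto\min_\sigma(\lambda\pol^\sigma_i-\polQ^\sigma_i)$, you work directly with edges of the planar convex hull $C_i=\conv(S_i)$ whose outward normal lies in the open second quadrant---the same data viewed as a polygon rather than as its support function, so the two proofs are reformulations of one another. Your route even has the minor advantage that \cref{lem:MinForPandQ} is not needed; in fact your citation of it for $v^\top$ and $v^\ell$ is misplaced, since their existence and vertex-status follow immediately from lexicographic extremality in the finite set $S_i$, whereas that lemma produces the top-right corner of $S_i$, which you never use.

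One bookkeeping slip to fix: the implication ``next CCW vertex from $v^\top$ has strictly smaller $y$-coordinate $\Rightarrow$ outward normal has positive $y$-component'' is off by a quarter-turn. Rotating the edge vector $90^\circ$ clockwise, a negative $y$-component of the edge yields a strictly negative $x$-component of the normal (\ie $\lambda>0$), not a positive $y$-component. The strict positivity of the $y$-component (\ie $\mu>0$) for the first edge follows instead from the observation that $v^\top$ does \emph{not} have minimal $x$-coordinate---otherwise $v^\top$ itself would be the forbidden $\tau$---so $(-1,0)$ lies outside its normal cone and the first edge normal stops strictly before angle $180^\circ$. This argument uses only $v^\top$; alternatively, your ``symmetric argument using $v^\ell$'' bounds the \emph{last} edge's normal away from $(-1,0)$, and monotonicity of outward normals along the arc then pins every edge strictly inside the open second quadrant, which also suffices.
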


\begin{proof}
To ease notation, we shorten ``$\min_{\sigma\in \c S_n}$'' to ``$\min$'', and idem for ``$\argmin$'', ``$\max$'' and ``$\argmax$''.
Recall that the map $\b \phi = \R^2 \to \R^n$ is defined by $\b \phi(\lambda, \mu)_i = \min\left(\lambda \pol^\sigma_i - \mu \polQ^\sigma_i\right)$. 
As~$\c S_n$ is finite, $f: \lambda \mapsto \b\phi(\lambda, 1)_i$ is a piecewise affine function (we define it for all $\lambda\in \R$, not only for $\lambda > 0$).
A value $\lambda_\circ\in \R$ is a break-point of $f$ (\ie an endpoint of an interval on which $f$ is affine) if and only if there exist $\tau, \tau'\in \argmin(\lambda_\circ\pol^{\sigma}_i - \polQ^{\sigma}_i)$ with $(\pol^\tau_i, \polQ^\tau_i) \ne (\pol^{\tau'}_i, \polQ^{\tau'}_i)$.
Such a break-point satisfies $\lambda_\circ = \frac{\polQ^{\tau'}_i-\polQ^\tau_i}{\pol^{\tau'}_i-\pol^\tau_i}$.

Hence, $(\lambda_\circ, 1)$ is a seed of $(\pol, \polQ)$ if and only if $\lambda_\circ > 0$ and $\lambda_\circ$ is a break-point of $\lambda\mapsto\b\phi(\lambda, 1)_i$ for some $i\in [n]$.
Note that $(\lambda, \mu)$ is a seed of $(\pol, \polQ)$ if and only $(\frac{\lambda}{\mu}, 1)$ is a seed of $(\pol, \polQ)$.
Thus, the pair $(\pol, \polQ)$ is fertile if and only if there exists $i\in [n]$ such that $\lambda\mapsto\b\phi(\lambda, 1)_i$ has a (strictly) positive break-point.

Pick some fixed $i \in [n]$.
By \Cref{lem:MinForPandQ}, there exists $\tau'\in \c S_n$ such that $\pol^{\tau'}_i = \max \pol^\sigma_i$ and $\polQ^{\tau'}_i = \max \polQ^\sigma_i$.
Thus, for any $\lambda \leq 0$, we have $ \argmin (\lambda\pol^\sigma_i - \polQ^\sigma_i) = \bigl(\argmax \pol^\sigma_i\bigr) \cap \bigl(\argmax\polQ^\sigma_i\bigr)$.
Consequently, a negative break-point of $\b\phi(\lambda, 1)_i$ would be of the form $\lambda_\circ = \frac{\polQ^{\tau'}_i-\polQ^\tau_i}{\pol^{\tau'}_i-\pol^\tau_i}$ for some $\tau'\in \bigl(\argmax \pol^\sigma_i\bigr) \cap \bigl(\argmax\polQ^\sigma_i\bigr)$. However, this implies that $\lambda_\circ \geq 0$, a contradiction. Therefore, all break-points of $\b\phi(\lambda, 1)_i$ are non-negative.

We will conclude this proof by showing that $0$ is the only break-point of $\b\phi(\lambda, 1)_i$ if and only if there exists $\tau\in \c S_n$ such that $\pol^{\tau}_i = \min\pol^\sigma_i$ and $\polQ^{\tau}_i = \max\polQ^\sigma_i$.

As $\polQ$ is not a point, we can assume $\min\polQ^\sigma_i\ne \max\polQ^\sigma_i$.
If $\bigl(\argmin\pol^\sigma_i\bigr)\cap\bigl(\argmax\polQ^\sigma_i\bigr)\ne\emptyset$, then for any $\lambda\geq 0$, we get: $\argmin (\lambda\pol^\sigma_i - \polQ^\sigma_i) = \bigl(\argmin\pol^\sigma_i\bigr)\cap\bigl(\argmax\polQ^\sigma_i\bigr)$.
This implies that $\b\phi(\lambda, 1)_i$ is affine in the interval $[0,\infty)$, and thus that the only break-point of $\b\phi(\lambda, 1)_i$ is $0$.

Reciprocally, if $\bigl(\argmin\pol^\sigma_i\bigr)\cap\bigl(\argmax\polQ^\sigma_i\bigr) = \emptyset$, 
consider $\tau\in \argmin\pol^\sigma_i$ which maximizes the value of $\polQ^\sigma_i$ (\ie $\tau \in \argmax_{\sigma\in \argmin\pol^\sigma_i} \polQ^\sigma_i$).
For any $\lambda$ large enough, $\tau\in\argmin (\lambda\pol^\sigma_i - \polQ^\sigma_i)$.
However, $\tau\notin \argmin ( - \polQ^\sigma_i)$ because $\polQ^\tau_i \ne\max\polQ^\sigma_i$.
Consequently, the two infinite intervals on which $\b\phi(\lambda, 1)_i$  is affine do not intersect at $0$: either they intersect at a strictly positive break-point; or there are at least two break-points, one of which being strictly positive.		
\end{proof}

\begin{example}\label{ex:fertile}
We shortly illustrate the use and efficiency of \Cref{thm:NotFertile} by concluding \Cref{exm:FertilitySmallExample}.
For $\pol = \polS_1$ and $\polQ = \polT_1$, we draw in \Cref{fig:MinMaxS1T1}, for each $i\in [3]$ and each $\sigma\in \c S_3$, the point $\bigl(\pol_i^\sigma, \polQ_i^\sigma\bigr)$.
For each $i\in [3]$, \Cref{lem:MinForPandQ} ensures that there is a top-right-most and a bottom-left-most point in the drawing.
For some $i\in [3]$, a permutation $\tau\in \c S_n$ is such that $\pol^\tau_i = \min_{\sigma\in \c S_n} \pol^\sigma_i$ and $\polQ^\tau_i = \max_{\sigma\in \c S_n} \polQ^\sigma_i$ if and only if the corresponding point is in the ``top-left corner'' of the drawing for $i$ (\ie both among the top-most points and among the left-most points).
As there is no such point for $i = 1$ (nor for $i= 2$), \Cref{thm:NotFertile} ensures that $(\polS_1, \polT_1)$ is fertile.

On the opposite, there is a point on the bottom-left corner for each $i\in [3]$, hence \Cref{thm:NotFertile} implies that $(\polT_1, \polS_1)$ is not fertile.
\end{example}

We are finally ready to prove the heredity of fertility which we left open earlier.

\begin{proof}[Proof of \Cref{thm:FertilityIsHeriditary}]
Suppose $\bigl(\liftGPsum[\polS][\polR],\, \liftGPsum\bigr)$ is not fertile.
By \Cref{thm:NotFertile}, there exists $\tau\in \c S_{n+1}$ such that $(\liftGPsum[\polS][\polR])^\tau_i = \min_{\sigma\in \c S_{n+1}} (\liftGPsum[\polS][\polR])^\sigma_i$ and $(\liftGPsum)^\tau_i = \max_{\sigma\in \c S_{n+1}} (\liftGPsum)^\sigma_i$.
By \Cref{lem:MaximizerInLowerDimension}, $\pol^{\tau^\downarrow}_i = \max_{\sigma\in \c S_n} \pol^\sigma_i$, and, symmetrically $\polS^{\tau^\downarrow}_i = \min_{\sigma\in \c S_n} \polS^\sigma_i$.
Again by \Cref{thm:NotFertile}, this implies that $(\polS, \pol)$ is not fertile, contradicting our assumption.
\end{proof}
To finish this section, we prove a new lower bound on the number of rays of the submodular cone $\SC$, and give a some-what explicit way to construct these vertices.
If $(\pol, \polQ)$ is fertile, then for $\lambda_\circ = \min\{\lambda\,;\,(\lambda, 1)\in \Lambda(\pol, \polQ)\}$, we denote $\mathdefn{\liftGPsumLM} \coloneqq\; \liftGPsum[\bigl(\polQ + \b\phi(\lambda_\circ,1)\bigr)][\bigl(\lambda_\circ\pol\bigr)]$.
Note that, by \Cref{prop:SeedIsRay}, the polytope $\liftGPsumLM\in\SC[n+1]$ is indecomposable.

\begin{corollary}\label{cor:CollectionOfRays}
If $\{\pol_1, \pol_2, \dots, \pol_r\}$ is a collection of indecomposable deformed $n$-permutahedra that are pairwise fertile, \ie such that the pair $(\pol_i, \pol_j)$ is fertile for all $i\in [r]$ and $j\in [r]$, then $\{\liftGPsumLM[\pol_j][\pol_i] ~;~ 1\leq i, j\leq r\}$ is a collection of indecomposable deformed $(n+1)$-permutahedra which are also pairwise fertile.
\end{corollary}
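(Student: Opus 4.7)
The plan is to unpack the corollary into two assertions and address each by directly applying the machinery developed earlier: Proposition~\ref{prop:SeedIsRay} for indecomposability, and Theorem~\ref{thm:FertilityIsHeriditary} for the inheritance of fertility. There is no genuine obstacle; the work is essentially bookkeeping of indices and of the translations/dilations that appear in the definition of $\liftGPsumLM$.

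\smallskip
\textbf{Step 1: each $\liftGPsumLM[\pol_j][\pol_i]$ is an indecomposable deformed $(n+1)$-permutahedron.} Since $(\pol_i,\pol_j)$ is fertile, the seed-set $\Lambda(\pol_i,\pol_j)$ is non-empty. Because the seed-set is a finite union of rays emanating from the origin (as the seeds correspond, for each coordinate $i$, to the finitely many break-points of the piecewise-affine function $\lambda\mapsto\b\phi(\lambda,1)_i$ identified in the proof of \Cref{thm:NotFertile}), the minimum $\lambda_\circ = \min\{\lambda : (\lambda,1)\in\Lambda(\pol_i,\pol_j)\}$ is well-defined and strictly positive. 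By \Cref{prop:SeedIsRay}, the polytope $\GPsum[\bigl(\pol_j + \b\phi(\lambda_\circ,1)\bigr)][\bigl(\lambda_\circ\pol_i\bigr)]$ is a ray of $\admisscone[\lambda_\circ\pol_i][\pol_j+\b\phi(\lambda_\circ,1)]$, which by \Cref{thm:AdmissibilityConeIsDeformationCone} equals (up to its lineality) the deformation cone of this GP-sum. Its lift $\liftGPsumLM[\pol_j][\pol_i]$ is then an indecomposable deformed $(n+1)$-permutahedron, since $\phi^\uparrow$ induces a linear isomorphism between generalized $n$-polymatroids (modulo the lineality spanned by $\b e_{n+1}$) and deformed $(n+1)$-permutahedra, and hence preserves indecomposability.

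\smallskip
\textbf{Step 2: any two elements of the new collection form a fertile pair.} Fix $i,j,k,l\in[r]$ and set
$$A \;=\; \liftGPsumLM[\pol_j][\pol_i] \;=\; \liftGPsum[\polS][\polR], \qquad B \;=\; \liftGPsumLM[\pol_l][\pol_k] \;=\; \liftGPsum[\polQ][\pol],$$
where, with $\lambda_\circ^{ij}$ and $\lambda_\circ^{kl}$ the minimal positive seeds used in the definitions,
$$\polR = \lambda_\circ^{ij}\pol_i,\quad \polS = \pol_j + \b\phi(\lambda_\circ^{ij},1),\quad \pol = \lambda_\circ^{kl}\pol_k,\quad \polQ = \pol_l + \b\phi(\lambda_\circ^{kl},1).$$
We verify the hypotheses of \Cref{thm:FertilityIsHeriditary}. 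The polytopes $\polS$ and $\pol$ are indecomposable because they are translated dilates of the indecomposable $\pol_j$ and $\pol_k$. The quotient relations $\polS\isquotient\polR$ and $\polQ\isquotient\pol$ hold by construction: each GP-sum produced by \Cref{prop:SeedIsRay} lies in the admissibility cone, so its top and bottom faces form a quotient by \Cref{lem:polymatroidtopbotquotient}. The indecomposability of the lifted GP-sums $A$ and $B$ was established in Step~1.

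\smallskip
\textbf{Step 3: conclude.} It remains to check that $(\polS,\pol)$ is fertile. Since $\polS$ is a translation of $\pol_j$ and $\pol$ is a dilation of $\pol_k$, \Cref{rmk:fertile_translationdilation} gives that $(\polS,\pol)$ is fertile if and only if $(\pol_j,\pol_k)$ is, and the latter holds by pairwise fertility of $\{\pol_1,\dots,\pol_r\}$. \Cref{thm:FertilityIsHeriditary} now yields that $(A,B)=(\liftGPsum[\polS][\polR],\liftGPsum[\polQ][\pol])$ is fertile, which is exactly the fertility of $(\liftGPsumLM[\pol_j][\pol_i],\liftGPsumLM[\pol_l][\pol_k])$. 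Ranging over all $i,j,k,l\in[r]$ gives the claim. \qed
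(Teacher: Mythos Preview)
Your proof is correct and follows the same approach as the paper: the paper's proof is a single sentence (``Apply \Cref{thm:FertilityIsHeriditary} to each pair''), and you have simply unpacked the verification of the hypotheses of that theorem explicitly. One cosmetic remark: in Step~1 the relevant admissibility cone is $\admisscone[\pol_i][\pol_j]$ (for the original pair, possibly after a translation to make the quotient generic), not $\admisscone[\lambda_\circ\pol_i][\pol_j+\b\phi(\lambda_\circ,1)]$, since the seed condition forces some of the quotient inequalities to be equalities; but this does not affect your conclusion, as \Cref{prop:SeedIsRay} already states indecomposability directly.
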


\begin{proof}
Apply \Cref{thm:FertilityIsHeriditary} to each pair $(\liftGPsumLM[\pol_j][\pol_i],\, \liftGPsumLM[\pol_\ell][\pol_k])$ for $1\leq i, j, k, \ell\leq n$.
\end{proof}

\begin{corollary}\label{cor:LowerBoundNumberOfRays}
Let $\mathdefn{t_n}$ be the number of rays of the cone $\SC$, and let $\mathdefn{p_k}$ be the maximal size of a family of pairwise fertile rays of $\SC[k]$, then we have:
$$t_n \geq p_k^{2^{n-k}}.$$
\end{corollary}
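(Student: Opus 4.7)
The plan is to prove this by induction on $m \coloneqq n-k$, using Corollary~\ref{cor:CollectionOfRays} as the inductive engine that squares the size of a pairwise-fertile family when passing from $\SC[\ell]$ to $\SC[\ell+1]$.

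\textbf{Base case.} For $m=0$, by definition $p_k$ is the size of some family of pairwise fertile rays in $\SC[k]$, and every such ray is, in particular, a ray of $\SC[k]$. Hence $t_k\geq p_k = p_k^{2^0}$.

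\textbf{Inductive step.} Assume that $\SC[k+m]$ contains a family $\{\pol_1,\ldots,\pol_r\}$ of $r\coloneqq p_k^{2^m}$ pairwise fertile rays. Applying Corollary~\ref{cor:CollectionOfRays} to this family produces, for every pair $(i,j)\in[r]^2$, an indecomposable deformed $(k+m+1)$-permutahedron $\liftGPsumLM[\pol_j][\pol_i]$, and the collection of all such polytopes is pairwise fertile. It remains to verify that these $r^2$ polytopes are pairwise distinct (as unoriented sets up to the lineality of $\SC[k+m+1]$), so that we really obtain $r^2=p_k^{2^{m+1}}$ rays. For this, note that by Proposition~\ref{prop:SeedIsRay} the top and bottom faces of $\liftGPsumLM[\pol_j][\pol_i]$ are normally equivalent to $\pol_i$ and $\pol_j$ respectively. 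Since the $\pol_i$'s are distinct rays of $\SC[k+m]$, they have pairwise distinct normal fans; hence the ordered pair $(i,j)$ can be recovered from $\liftGPsumLM[\pol_j][\pol_i]$, which shows that the $r^2$ resulting polytopes are pairwise distinct. Therefore $\SC[k+m+1]$ admits at least $p_k^{2^{m+1}}$ pairwise fertile rays, completing the induction.

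\textbf{Conclusion.} Taking $m=n-k$, we obtain a family of $p_k^{2^{n-k}}$ (pairwise fertile, hence in particular distinct) rays of $\SC$, whence $t_n\geq p_k^{2^{n-k}}$.

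The statement is essentially a direct compounding of Corollary~\ref{cor:CollectionOfRays}; the only non-bookkeeping point is the distinctness argument in the inductive step, which follows from the fact that the top and bottom faces of the constructed ray reproduce, up to normal equivalence, the two input rays, so that no two distinct pairs $(i,j)$ can yield the same output polytope.
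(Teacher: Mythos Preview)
Your proof is correct and follows essentially the same approach as the paper's own argument: both set up an induction (the paper phrases it as a recursive construction $\c C_{n+1}=\{\liftGPsumLM[\polQ][\pol]\,;\,\pol,\polQ\in\c C_n\}$), invoke Corollary~\ref{cor:CollectionOfRays} to pass from a pairwise-fertile family in $\SC[\ell]$ to one in $\SC[\ell+1]$, and establish distinctness by observing that the top and bottom faces of $\liftGPsumLM[\polQ][\pol]$ recover $\pol$ and $\polQ$ up to normal equivalence (since $\lambda,\mu>0$ in a seed). Your explicit framing of the induction hypothesis as ``there exists a pairwise-fertile family of size $p_k^{2^m}$'' is exactly the invariant the paper maintains implicitly, and your reference to Proposition~\ref{prop:SeedIsRay} for the top/bottom face property is appropriate.
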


\begin{proof}
Let $\c C_k$ be a family of pairwise fertile rays of $\SC[k]$, with $|\c C_k| = p_k$.
For each $n\geq k$, let $\c C_{n+1} = \{\liftGPsumLM ~;~ \pol\in\c C_n,\,\polQ\in\c C_n\}$.
By \Cref{cor:CollectionOfRays}, all polytopes in $\c C_n$ are indecomposable deformed $n$-permutahedra.
Moreover, the bottom and top faces of $\liftGPsumLM$ are (isomorphic to) $\pol$ and $\polQ$ respectively, because $\lambda, \mu > 0$ in a seed.
Hence, if no two polytopes in $\c C_n$ are normally equivalent, then no two polytopes in $\c C_{n+1}$ are normally equivalent.
Thus, the number of rays of $\SC$ is lower bounded by~$|\c C_n|$.
We have: $|\c C_{n+1}| = |\c C_n|^2$;
consequently, $|\c C_n| = p_k^{2^{n-k}}$.
\end{proof}

In the upcoming \Cref{exm:FamillyOfPairwiseFertileN4,exm:FamillyOfPairwiseFertileN5} we will provide explicit families of pairwise fertile rays of $\SC[k]$ for the cases $k=4$ and $k=5$, which combined with the previous corollary give the following bounds. (We provide both bounds because, even if the one arising from $k=5$ is larger, the first is easier to check ``by hand'' as it only concerns an explicit family of 11 polytopes that we can list in the paper.)

\begin{corollary}\label{cor:ExplicitLowerBoundNumberOfRays}
The number $\mathdefn{t_n}$ of rays of $\SC$ satisfies
 $t_n \geq 11^{2^{n-4}}$ for all $n\geq 4$ and $t_n \geq 656^{2^{n-5}}$ for all $n\geq 5$.
\end{corollary}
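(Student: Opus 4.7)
The plan is to apply \Cref{cor:LowerBoundNumberOfRays} with $k = 4$ (resp. $k = 5$) and rely on the forward-referenced \Cref{exm:FamillyOfPairwiseFertileN4,exm:FamillyOfPairwiseFertileN5}. Since \Cref{cor:LowerBoundNumberOfRays} already gives $t_n \geq p_k^{2^{n-k}}$, the entire content of this corollary is to establish the numerical lower bounds $p_4 \geq 11$ and $p_5 \geq 656$.

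First, for $p_4 \geq 11$, I would exhibit an explicit list $\polT_1, \dots, \polT_{11}$ of indecomposable deformed $4$-permutahedra and verify that every pair $(\polT_i, \polT_j)$ is fertile. The verification is purely combinatorial thanks to \Cref{thm:NotFertile}: for each ordered pair $(\polT_i, \polT_j)$ and each coordinate index $\ell \in [4]$, it suffices to check that there exists at least one permutation $\tau \in \c S_4$ such that $(\polT_i)^\tau_\ell > \min_{\sigma} (\polT_i)^\sigma_\ell$ or $(\polT_j)^\tau_\ell < \max_{\sigma} (\polT_j)^\sigma_\ell$. Concretely, for each $\polT_i$ one tabulates the $24$ vertices indexed by $\c S_4$; then for each ordered pair and each of the four coordinates one plots the $24$ points $\bigl((\polT_i)^\sigma_\ell,\, (\polT_j)^\sigma_\ell\bigr)\in\R^2$ and checks (as in \Cref{fig:MinMaxS1T1}) that the ``top-left corner'' is empty for at least one~$\ell$.

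Second, for $p_5 \geq 656$, an exhaustive hand-check is no longer feasible, so I would rely on a computer-assisted search. The input is the full list of $117\,978$ rays of $\SC[5]$ known from~\cite{studeny2000extreme}, stored up to dilation/translation (fertility is invariant under these operations by \Cref{rmk:fertile_translationdilation}). For each ordered pair of rays one applies \Cref{thm:NotFertile} as above, producing a directed graph on the rays whose arcs record fertile pairs. The required statement is that this graph contains a clique of size $656$ (where ``clique'' here means a subset in which every ordered pair is an arc, including the loops, which are automatic since $(\pol, \pol)$ is always fertile). Finding such a clique can be carried out by a standard clique-enumeration heuristic; the actual family can then be recorded in \Cref{exm:FamillyOfPairwiseFertileN5}.

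Once $p_4 \geq 11$ and $p_5 \geq 656$ are in place, the conclusion is immediate: \Cref{cor:LowerBoundNumberOfRays} yields $t_n \geq 11^{2^{n-4}}$ for $n \geq 4$ and $t_n \geq 656^{2^{n-5}}$ for $n \geq 5$. The main obstacle is clearly the construction and verification of the pairwise-fertile family at $n=5$: while the $n=4$ case can be written out explicitly in the paper, at $n=5$ one must both produce a sufficiently large clique in a graph on more than $10^5$ vertices and ensure reproducibility of the computation (e.g.\ by publishing the list of $656$ rays and the verification code alongside the paper).
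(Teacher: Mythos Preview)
Your approach is essentially the paper's: combine \Cref{cor:LowerBoundNumberOfRays} with the explicit families in \Cref{exm:FamillyOfPairwiseFertileN4,exm:FamillyOfPairwiseFertileN5}. Two small remarks. First, your quantifiers slip in the $n=4$ verification: the sentence ``for each ordered pair and each $\ell$, it suffices to check that there exists at least one $\tau$ with $(\polT_i)^\tau_\ell > \min_\sigma (\polT_i)^\sigma_\ell$ or $(\polT_j)^\tau_\ell < \max_\sigma (\polT_j)^\sigma_\ell$'' is not the right criterion (that condition is almost always satisfied). What \Cref{thm:NotFertile} actually requires is that for \emph{some} $\ell$ there is \emph{no} $\tau$ simultaneously achieving the minimum of $(\polT_i)^\sigma_\ell$ and the maximum of $(\polT_j)^\sigma_\ell$; your follow-up sentence (``the top-left corner is empty for at least one $\ell$'') is correct and is what you should keep. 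Second, for $n=5$ the paper does not build the fertility graph on all $117\,978$ rays but first reduces to one representative per combinatorial equivalence class ($672$ polytopes) and runs the clique search on that much smaller graph; this does not affect correctness but makes the computation tractable and is worth adopting.
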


\begin{proof}
Combine \Cref{cor:LowerBoundNumberOfRays} with \Cref{exm:FamillyOfPairwiseFertileN4,exm:FamillyOfPairwiseFertileN5}.
\end{proof}

\begin{example}\label{exm:FamillyOfPairwiseFertileN4}
The following collection of 11 polytopes $\{\pol_1, \dots, \pol_{11}\}$ are:
\begin{compactenum}
\item[a.] deformed $4$-permutahedra (\ie all the edges are in directions $\b e_i - \b e_j$);
\item[b.] indecomposable polytopes;
\item[c.] pairwise not normally equivalent (careful: they can be rotations of one another);
\item[d.] pairwise fertile (\ie for any $1\leq i, j\leq 11$, both $(\pol_i, \pol_j)$ and $(\pol_j, \pol_i)$ are fertile).
\end{compactenum}

Each polytope is given as a list of vertices, see \Cref{fig:4Permutahedra}.
Checking the direction of the edges is straightforward.
The indecomposability has been shown in \cite{PadrolPoullot-NewCriteriaIndecomposability}, among others, or can be checked by computing the deformation cone (and observing that it is a ray).
The non-normal-equivalence can be checked by computing, for each polytope, the subsets of $\{\pol^\sigma = \b v ~;~ \sigma\in \c S_4\}$ for each vertex of $\pol$; then comparing these lists between two polytopes.
The pairwise fertility can be checked using \Cref{thm:NotFertile}.

\noindent$\pol_{1} \coloneqq \conv\Bigl($
(2, 2, 3, 1),
(2, 3, 2, 1),
(2, 3, 3, 0),
(3, 2, 2, 1),
(3, 2, 3, 0),
(3, 3, 2, 0)
$\Bigr)$

\noindent$\pol_2 \coloneqq \conv\Bigl($
(2, 2, 3, 1),
(2, 3, 2, 1),
(2, 3, 3, 0),
(3, 2, 2, 1),
(3, 2, 3, 0),
(3, 3, 2, 0),
(4, 2, 2, 0)
$\Bigr)$

\noindent$\pol_3 \coloneqq \conv\Bigl($
(2, 2, 3, 1),
(2, 3, 2, 1),
(2, 3, 3, 0),
(2, 4, 2, 0),
(3, 2, 2, 1),
(3, 2, 3, 0),
(3, 3, 2, 0)
$\Bigr)$

\noindent$\pol_4 \coloneqq \conv\Bigl($
(2, 2, 3, 1),
(2, 2, 4, 0),
(2, 3, 2, 1),
(2, 3, 3, 0),
(3, 2, 2, 1),
(3, 2, 3, 0),
(3, 3, 2, 0)
$\Bigr)$

\noindent$\pol_5 \coloneqq \conv\Bigl($
(2, 2, 2, 2),
(2, 2, 3, 1),
(2, 3, 2, 1),
(2, 3, 3, 0),
(3, 2, 2, 1),
(3, 2, 3, 0),
(3, 3, 2, 0)
$\Bigr)$

\noindent$\pol_6 \coloneqq \conv\Bigl($
(4, 5, 5, 2),
(4, 5, 6, 1),
(4, 6, 4, 2),
(4, 6, 5, 1),
(5, 4, 5, 2),
(5, 4, 6, 1),
(5, 5, 4, 2),
(5, 5, 6, 0),
(5, 6, 4, 1),
(5, 6, 5, 0),
(6, 4, 5, 1),
(6, 5, 4, 1),
(6, 5, 5, 0)
$\Bigr)$

\noindent$\pol_7 \coloneqq \conv\Bigl($
(4, 4, 6, 2),
(4, 5, 5, 2),
(4, 5, 6, 1),
(4, 6, 5, 1),
(5, 4, 5, 2),
(5, 4, 6, 1),
(5, 5, 4, 2),
(5, 5, 6, 0),
(5, 6, 4, 1),
(5, 6, 5, 0),
(6, 4, 5, 1),
(6, 5, 4, 1),
(6, 5, 5, 0)
$\Bigr)$

\noindent$\pol_8 \coloneqq \conv\Bigl($
(4, 5, 5, 2),
(4, 5, 6, 1),
(4, 6, 5, 1),
(4, 6, 6, 0),
(5, 4, 5, 2),
(5, 4, 6, 1),
(5, 5, 4, 2),
(5, 5, 6, 0),
(5, 6, 4, 1),
(5, 6, 5, 0),
(6, 4, 5, 1),
(6, 5, 4, 1),
(6, 5, 5, 0)
$\Bigr)$

\noindent$\pol_9 \coloneqq \conv\Bigl($
(4, 5, 5, 2),
(4, 5, 6, 1),
(4, 6, 5, 1),
(5, 4, 5, 2),
(5, 4, 6, 1),
(5, 5, 4, 2),
(5, 5, 6, 0),
(5, 6, 4, 1),
(5, 6, 5, 0),
(6, 4, 4, 2),
(6, 4, 5, 1),
(6, 5, 4, 1),
(6, 5, 5, 0)
$\Bigr)$

\noindent$\pol_{10} \coloneqq \conv\Bigl($
(4, 5, 5, 2),
(4, 5, 6, 1),
(4, 6, 5, 1),
(5, 4, 5, 2),
(5, 4, 6, 1),
(5, 5, 4, 2),
(5, 5, 6, 0),
(5, 6, 4, 1),
(5, 6, 5, 0),
(6, 4, 5, 1),
(6, 4, 6, 0),
(6, 5, 4, 1),
(6, 5, 5, 0)
$\Bigr)$

\noindent$\pol_{11} \coloneqq \conv\Bigl($
(4, 5, 5, 2),
(4, 5, 6, 1),
(4, 6, 5, 1),
(5, 4, 5, 2),
(5, 4, 6, 1),
(5, 5, 4, 2),
(5, 5, 6, 0),
(5, 6, 4, 1),
(5, 6, 5, 0),
(6, 4, 5, 1),
(6, 5, 4, 1),
(6, 5, 5, 0),
(6, 6, 4, 0)
$\Bigr)$

\begin{figure}
    \centering
    \begin{subfigure}[b]{0.18\linewidth}
         \centering
         \includegraphics[width=\textwidth]{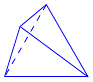}
         \caption{Tetrahedron}
         \label{sfig:Tetrahedron}
    \end{subfigure}
    \begin{subfigure}[b]{0.16\linewidth}
         \centering
         \includegraphics[width=\textwidth]{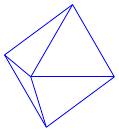}
         \caption{Square pyr.}
         \label{sfig:Pyramid}
    \end{subfigure}
    \begin{subfigure}[b]{0.16\linewidth}
         \centering
         \includegraphics[width=\textwidth]{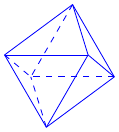}
         \caption{Octahedron}
         \label{sfig:Octahedron}
    \end{subfigure}
    \begin{subfigure}[b]{0.17\linewidth}
         \centering
         \includegraphics[width=\textwidth]{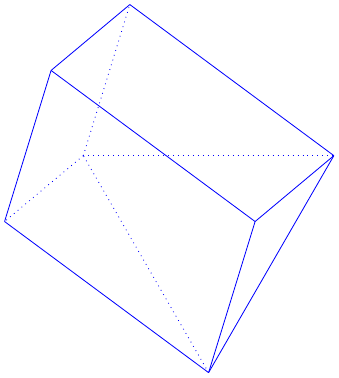}
         \caption{\emph{Strawberry}}
         \label{sfig:Strawberry}
    \end{subfigure}
    \begin{subfigure}[b]{0.22\linewidth}
         \centering
         \includegraphics[width=\textwidth]{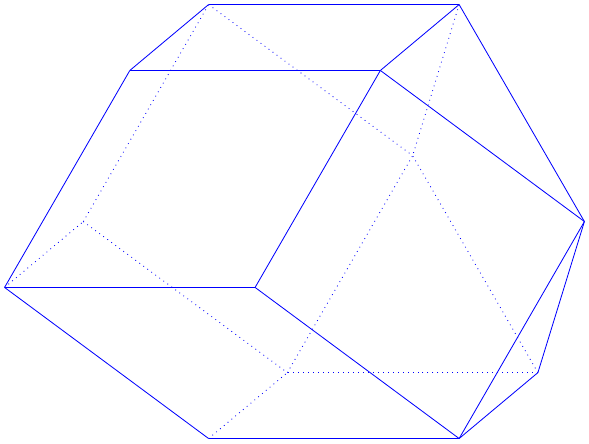}
         \caption{\emph{Persimmon}}
         \label{sfig:Persimmon}
    \end{subfigure}
    \caption[Indecomposable deformed $4$-permutahedra]{
    Up to permutation of the coordinates and central symmetry, each (polytope corresponding to a) ray of $\SC[4]$ is one of these five polytopes (using the names from \cite{PadrolPoullot-NewCriteriaIndecomposability}) or a triangle, or a segment.
    Accounting for all the possible permutations of coordinates and central symmetry, the cone $\SC[4]$ has 37 rays. In \Cref{exm:FamillyOfPairwiseFertileN4}, $\pol_i$ is an octahedron for $i = 1$;  a \emph{Strawberry} for $2\leq i\leq 5$; and a \emph{Persimmon} for $6\leq i\leq 11$.}
    \label{fig:4Permutahedra}
\end{figure}
\end{example}

\begin{example}\label{exm:FamillyOfPairwiseFertileN5}
For $n = 5$, there exists a family of 656 polytopes $\{\pol_1, \dots, \pol_{656}\}$ which are:
\begin{compactenum}
\item[a.] deformed $5$-permutahedra (\ie all the edges are in directions $\b e_i - \b e_j$);
\item[b.] indecomposable polytopes;
\item[c.] pairwise not combinatorially equivalent (\ie having pairwise non-isomorphic face lattices, see \cite[below Theorem 2.7]{Ziegler-polytopes});
\item[d.] pairwise fertile (\ie for any $1\leq i, j\leq 656$, both $(\pol_i, \pol_j)$ and $(\pol_j, \pol_i)$ are fertile).
\end{compactenum}

The full list is available as supplementary material at \cite{EmbrunForestier}.
To obtain this list of polytopes, we first listed all the $117\,978$ deformed $5$-permutahedra (by computing the submodular cone $\SC[5]$ via its facet description then computing its rays\footnote{One can also use the Csirmaz--Csirmaz database~\cite{CsirmazCsirmaz-AttemptingTheImpossible} to reproduce our computations.}), and kept one representative of each of the 672 classes of combinatorially equivalent deformed $5$-permutahedra.
Then, we defined a graph whose vertices are these 672 polytopes, and where there is an edge between $\pol$ and $\polQ$ if both $(\pol, \polQ)$ and $(\polQ, \pol)$ are fertile (which can be checked quickly thanks to \Cref{thm:NotFertile}).
A clique in this graph is a family satisfying the above criteria a. to d.: we extracted a clique of maximal size, gathering 656 polytopes.

Note that this algorithm might produce different results depending on the chosen representatives of the 672 classes of combinatorially equivalent deformed 5-permutahedra.

This gives the lower bound $p_5\geq 656$.
Computing $p_5$ precisely would require to run a fertility test on all pairs of $117\,978$ deformed $5$-permutahedra, then find the maximal clique in the resulting graph (with $117\,978$ vertices).
We did not run this computation.
\end{example}

According to the previous example, there are 672 combinatorially-different indecomposable deformed $5$-permutahedra, and 656 of them which form a family of pairwise fertile pairs.
This motivates the following conjecture.

\begin{conjecture}\label{conj:RatioFertility}
Let $t_n$ be the number of rays of the submodular cone $\SC$, and let $u_n$ be the maximum $s$ such that there exists a family $\{\pol_1, \dots, \pol_s\}$ of indecomposable deformed $n$-permutahedra such that both $(\pol_i, \pol_j)$ and $(\pol_j, \pol_i)$ are fertile, for all $i, j\in[s]$, and $\pol_i, \pol_j$ are not normally equivalent for $i\ne j$.
We conjecture that $\frac{u_n}{t_n}\to 1$ when $n\to +\infty$.
\end{conjecture}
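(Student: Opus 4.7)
The plan is to translate the conjecture into a question about the fertility graph $G_n$ whose vertex set is the (normal-equivalence classes of) rays of $\SC$, with an edge between $\pol$ and $\polQ$ iff both ordered pairs $(\pol,\polQ)$ and $(\polQ,\pol)$ are fertile. Then $t_n$ is the number of vertices of $G_n$ and $u_n$ is its clique number. To prove $u_n/t_n\to 1$, a natural sufficient condition is that the complement $\overline{G}_n$ has $o(t_n^2)$ edges and a well-behaved degree sequence; then a greedy / iterative vertex-removal argument, or a random-sampling argument, extracts a clique of size $(1-o(1))t_n$ in $G_n$. So the core task reduces to bounding the number of non-fertile ordered pairs.

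The second step is to exploit Theorem~\ref{thm:NotFertile} to quantify non-fertility. For fixed $\pol$ and coordinate $i$, set $A_i(\pol)\subseteq\c S_n$ to be the permutations that achieve $\min_\sigma\pol_i^\sigma$, and symmetrically $B_i(\polQ)$ for the maximizers. Then $(\pol,\polQ)$ fails fertility iff $A_i(\pol)\cap B_i(\polQ)\neq\emptyset$ for every $i\in[n]$. Each such condition looks like a single coincidence, and the $n$ conditions must hold simultaneously in different coordinates. I would try to show, by an averaging or codimension-counting argument, that for any fixed indecomposable $\pol$, the fraction of rays $\polQ$ satisfying all $n$ coincidences is $o(1)$ as $n\to\infty$; a symmetric statement would handle $(\polQ,\pol)$. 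The empirical data from \Cref{exm:FamillyOfPairwiseFertileN5} (656 pairwise fertile out of 672 classes in dimension $5$) gives strong heuristic support.

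The third step is a bootstrap along the inductive construction. By \Cref{thm:FertilityIsHeriditary}, fertility is hereditary under $\liftGPsumLM$: if $\c C_n$ is a pairwise-fertile family of size $s$, then $\{\liftGPsumLM\;;\;\pol,\polQ\in\c C_n\}$ is a pairwise-fertile family of size $s^2$ in dimension $n+1$, giving $u_{n+1}\geq u_n^2$. The upper-bound $t_n\leq n^{2^n}$ from \Cref{prop:upperbound} together with a matching $u_n^2$-type lower bound on $u_{n+1}$ should be strengthened by showing that the images $\liftGPsumLM$ account for essentially all rays of $\SC[n+1]$, or at least all but $o(t_{n+1})$ of them. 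Concretely I would attempt to show that every ray of $\SC[n+1]$ is either of the form $\liftGPsumLM$ for a suitably fertile pair from $\SC$, or belongs to a negligible subclass; combining this with the hereditary property would push the ratio $u_n/t_n$ to $1$.

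The main obstacle is precisely the lack of a structural understanding of a "generic" ray of $\SC$ for large $n$. We have no tight upper bound on $t_n$, nor any explicit parameterization of rays (this is exactly why the original problem of Edmonds is so hard), so controlling the fraction of non-fertile pairs cannot be done by counting alone. The crucial obstacle is to prove that for a uniformly chosen indecomposable $\pol$, the simultaneous coincidence conditions $A_i(\pol)\cap B_i(\polQ)\neq\emptyset$ for all $i$ are overwhelmingly unlikely. Any honest attack likely needs either a random-polytope model compatible with the ray structure, a codimension argument inside the admissibility cone showing that the locus of non-fertile pairs has strictly smaller dimension than a generic face of $\SC[n+1]$, or a direct combinatorial analysis of the constructed rays in \Cref{cor:CollectionOfRays}. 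Without one of these ingredients, this remains at the level of a conjecture rather than a theorem.
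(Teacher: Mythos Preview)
The statement you are addressing is labelled \emph{Conjecture} in the paper and is explicitly left open; the paper offers no proof, only the heuristic evidence from \Cref{exm:FamillyOfPairwiseFertileN5} and the hereditary property of fertility. So there is nothing to compare your proposal against, and you yourself correctly conclude in your final paragraph that the argument does not close.

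That said, two points in your sketch would not survive as stated even if the missing ingredients materialised. First, the claim that ``$\overline{G}_n$ has $o(t_n^2)$ edges and a well-behaved degree sequence'' suffices for a clique of size $(1-o(1))t_n$ is too optimistic: even if \emph{every} vertex of $\overline{G}_n$ has degree $o(t_n)$, the clique number of $G_n$ can still be tiny (think of a random graph with edge probability $1-1/\log t_n$). What you actually need is that all but $o(t_n)$ vertices are isolated in $\overline{G}_n$, i.e.\ a vertex cover of size $o(t_n)$ for the complement; your coordinate-wise coincidence heuristic via \Cref{thm:NotFertile} does not obviously deliver this stronger statement. Second, your bootstrap step proposes to show that ``the images $\liftGPsumLM$ account for essentially all rays of $\SC[n+1]$'', but the paper already observes (in the remark following \Cref{thm:UpperLowerBoundsTnSn}) that $a_n<t_n$ strictly and that concrete rays such as the \emph{Strawberry} and \emph{Persimmon} do not arise this way; whether the exceptional rays are asymptotically negligible is precisely the open question about $a_n/t_n$ that the paper raises alongside the conjecture.
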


\subsection{Upper and lower bounds on the number of rays of the submodular cone}\label{ssec:NewRays}

The number of rays of $\SC[2]$, $\SC[3]$, $\SC[4]$ and $\SC[5]$ are well-known:
for $n \in\{2, 3, 4, 5\}$, the cone $\SC$ has respectively $1$, $6$, $37$, $117\, 978$ rays.
For $n = 6$, this number is estimated between $10^{13}$ and $10^{20}$ by recent work of Csirmaz and Csirmaz \cite{CsirmazCsirmaz-AttemptingTheImpossible} who computed more than $360$ billion rays of~$\SC[6]$ (and explained why obtaining the exact count shall be regarded as ``impossible'', already for $n = 6$).
To give context to our bound from \Cref{cor:ExplicitLowerBoundNumberOfRays}, we compare it to the current best upper and lower bounds for the number of rays of $\SC$ that we are aware of, reported in~\cite{HaimanYao:2023}.

\begin{definition}
Let $\mathdefn{t_n}$ be the number of rays of $\SC$ and $\mathdefn{a_n}$ be the number of rays of $\SC$ containing polytopes $\pol$ such that both $\pol^{+\b e_n}$ and $\pol^{-\b e_n}$ are indecomposable (\ie rays of $\SC[n-1]$).
 \end{definition}

As far as we are aware, the only known upper bound for $t_n$ was found in \cite[Theorem~5.1]{HaimanYao:2023}. We can slightly improve it (roughly to its square root) by using McMullen's Upper Bound Theorem \cite{McMullen-MaximumNumberOfFaces,Stanley-NumberOfFaces}, which gives a tight bound for the number of vertices of a polytope in terms of its number of facets. For our asymptotic applications, we only need the following simpler bound, that has a very simple and elegant proof in \cite[Proposition 5.5.2]{Matousek}.

\begin{theorem}[{Asymptotic Upper Bound Theorem \cite[Prop. 5.5.2]{Matousek}}]\label{thm:UBTgeneral}
For any $d$-polytope with $f_{d-1}$ facets, its number $f_0$ of vertices satisfies:
$$f_0 \leq 2\,\binom{f_{d-1}}{\lfloor \frac{d}{2}\rfloor}$$
\end{theorem}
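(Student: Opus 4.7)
The plan is to prove this via the classical labelling argument: reduce to simple polytopes and then label each vertex by a small subset of facets using a generic linear functional.

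First, by slightly perturbing the right-hand sides of the facet inequalities describing $P$ in generic position, we may assume that $P$ is simple, since such perturbations do not decrease the number of vertices while preserving the number of facets (non-simple vertices split into multiple simple ones after a generic perturbation). In a simple $d$-polytope, each vertex $v$ is incident to exactly $d$ facets $F_1,\ldots,F_d$ and exactly $d$ edges $e_1,\ldots,e_d$, with the canonical bijection ``$e_i$ is the unique edge at $v$ contained in all $F_j$ for $j\ne i$''.

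Next, fix a generic linear functional $\b c$. At each vertex $v$, partition the incident edges into $k(v)$ \emph{down-edges} and $d-k(v)$ \emph{up-edges} according to whether the adjacent vertex has smaller or larger $\b c$-value. Through the bijection above, this yields two subsets of facets at $v$, namely $\mathcal{F}_D(v)$ and $\mathcal{F}_U(v)$, of respective sizes $k(v)$ and $d-k(v)$. The key observation is that each of the maps $v \mapsto \mathcal{F}_D(v)$ and $v \mapsto \mathcal{F}_U(v)$ is injective: indeed, the face $G := \bigcap_{F\in\mathcal{F}_D(v)} F$ has codimension $k(v)$ by simplicity, and contains $v$ together with all of $v$'s up-edges (since the only edges at $v$ lying in all $F\in \mathcal{F}_D(v)$ are those not corresponding to any $F\in \mathcal{F}_D(v)$, i.e., the up-edges). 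Thus $v$ is a local minimum of $\b c$ on $G$; since $G$ is itself simple, a local minimum is automatically global, and genericity of $\b c$ ensures uniqueness. The symmetric statement for $\mathcal{F}_U(v)$ recovers $v$ as the unique maximizer of $\b c$ on $\bigcap_{F\in \mathcal{F}_U(v)} F$.

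Since $\min\bigl(k(v), d-k(v)\bigr) \leq \lfloor d/2 \rfloor$ for every vertex, each vertex is assigned by one of the two maps to a subset of facets of size at most $\lfloor d/2 \rfloor$, giving
$$f_0 \;\leq\; 2\sum_{k=0}^{\lfloor d/2\rfloor}\binom{f_{d-1}}{k}.$$
For $f_{d-1}$ sufficiently large compared to $d$, consecutive binomial coefficients shrink rapidly (the ratio $\binom{f_{d-1}}{k-1}/\binom{f_{d-1}}{k}$ tends to $0$), so the sum is dominated by its largest term $\binom{f_{d-1}}{\lfloor d/2 \rfloor}$, yielding the claimed bound. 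The main obstacles are the perturbation reduction (checking that it preserves the facet count and does not decrease the vertex count) and the injectivity verification of the two labellings, both of which rely crucially on the simple polytope structure and a careful use of $\b c$-genericity.
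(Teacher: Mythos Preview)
The paper does not prove this theorem; it simply quotes \cite[Prop.~5.5.2]{Matousek} and refers the reader there. Your argument is precisely the labelling argument from Matou\v{s}ek, so in spirit you are reproducing the intended proof.

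There is, however, a genuine gap in your final step. From the injectivity of $v\mapsto\mathcal{F}_D(v)$ and $v\mapsto\mathcal{F}_U(v)$ you correctly obtain
\[
f_0 \;\leq\; 2\sum_{k=0}^{\lfloor d/2\rfloor}\binom{f_{d-1}}{k},
\]
but your passage from this sum to the single term $2\binom{f_{d-1}}{\lfloor d/2\rfloor}$ is invalid as written: the statement is claimed for \emph{every} $d$-polytope, not only for $f_{d-1}$ large relative to $d$, and the sum can genuinely exceed the top term (e.g.\ $d=8$, $f_{d-1}=9$ gives $\sum_{k\le 4}\binom{9}{k}=256>252=2\binom{9}{4}$). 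The asymptotic domination you invoke does not recover the exact constant~$2$.

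The fix is a small refinement of the labelling. Rather than recording the \emph{entire} set $\mathcal{F}_D(v)$ (whose size varies), observe that if $v$ has at least $\lceil d/2\rceil$ up-edges then any $\lceil d/2\rceil$ of them span a face of dimension exactly $\lceil d/2\rceil$ on which $v$ is the unique $\b c$-minimum; symmetrically for down-edges and maxima. Since every vertex has either $\geq\lceil d/2\rceil$ up-edges or $\geq\lceil d/2\rceil$ down-edges, and since in a simple polytope each $\lceil d/2\rceil$-face is the intersection of exactly $\lfloor d/2\rfloor$ facets, you get directly
\[
f_0 \;\leq\; 2\cdot\bigl(\text{number of }\lceil d/2\rceil\text{-faces}\bigr)\;\leq\; 2\binom{f_{d-1}}{\lfloor d/2\rfloor},
\]
with no sum to collapse.
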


\begin{proposition}\label{prop:upperbound}
For all $n\geq 4$, we have:
$t_n \,\leq\, n^{2^n}$.
\end{proposition}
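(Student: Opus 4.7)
The plan is to realize $t_n$ as the number of vertices of a cross-section polytope of $\SC/\lineal(\SC)$ and then apply the Asymptotic Upper Bound Theorem (\Cref{thm:UBTgeneral}). The setup requires two structural facts about $\SC$. First, its lineality space (generated by modular functions, equivalently by translations of the associated deformed permutahedra) has dimension $n$, so the quotient $\SC/\lineal(\SC)$ is a pointed polyhedral cone of dimension $D := 2^n - 1 - n$. Second, $\SC$ admits a minimal facet description by the elementary (``diamond'') submodularity inequalities
\[
f(A\cup\{i\}) + f(A\cup\{j\}) \geq f(A\cup\{i,j\}) + f(A),
\]
ranging over pairs $\{i,j\}\subseteq[n]$ and subsets $A\subseteq[n]\setminus\{i,j\}$, of which there are $N := \binom{n}{2}\,2^{n-2}$; that each of these is facet-defining is classical (see e.g.\ \cite{Fujishige:2005}).

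Next, I would slice the pointed quotient cone with a generic affine hyperplane to obtain a polytope $P$ of dimension $d := D - 1 = 2^n - n - 2$, with $N$ facets and exactly $t_n$ vertices. Then \Cref{thm:UBTgeneral} gives $t_n \leq 2\binom{N}{k}$ with $k := \lfloor d/2\rfloor$, and combining this with the standard estimate $\binom{N}{k} \leq (eN/k)^k$ reduces the problem to controlling $eN/k$. The key numerical claim to verify is that $eN/k \leq n^2$ for all $n\geq 4$. Using $k \geq (2^n - n - 3)/2$, this is equivalent to $e(n-1)/4 \leq n\bigl(1 - (n+3)/2^n\bigr)$; the right-hand side is monotone increasing in $n$, so it suffices to check at $n=4$, where it equals $4\cdot 9/16 = 9/4$ while the left-hand side is $3e/4 \approx 2.04 < 9/4$.

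Combining $\binom{N}{k} \leq (n^2)^k = n^{2k}$ with the floor inequality $2k\leq d = 2^n - n - 2$ will then yield
\[
t_n \leq 2\,n^{2^n - n - 2} \leq n^{2^n},
\]
the last step using $n^{n+2}\geq 2$ for $n\geq 2$. The main obstacle I anticipate is producing a self-contained justification of the facet count of $\SC$, i.e.\ that each elementary submodularity inequality is in fact irredundant. Although this is a well-established fact, a clean argument (e.g.\ exhibiting for each elementary inequality a submodular function saturating precisely that single inequality) would be preferable to relying on a black-box citation.
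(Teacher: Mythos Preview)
Your approach is essentially identical to the paper's: take a cross-section of the pointed cone $\SC/\lineal(\SC)$, apply the Asymptotic Upper Bound Theorem, and bound the resulting binomial via $\binom{N}{k}\le(eN/k)^k$. The paper cites \cite[Thm.~7.2]{ArdilaCastilloEurPostinkov2020-CoxeterSubmodular} for the facet count rather than reproving it, so your worry about that point is shared and handled the same way.

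There is, however, a genuine slip in your numerical verification. You reduce the claim $eN/k\le n^2$ to
\[
\frac{e(n-1)}{4}\;\le\;n\Bigl(1-\frac{n+3}{2^n}\Bigr)
\]
and then write ``the right-hand side is monotone increasing in $n$, so it suffices to check at $n=4$''. That reasoning is invalid: the left-hand side $e(n-1)/4$ is \emph{also} increasing in $n$, so monotonicity of the right-hand side alone says nothing. The inequality you need is nonetheless true for all $n\ge4$, and the fix is easy: rewrite it as
\[
n\Bigl(1-\tfrac{e}{4}\Bigr)+\tfrac{e}{4}-\frac{n(n+3)}{2^n}\;\ge\;0,
\]
observe that $1-e/4>0$ and that $n(n+3)/2^n$ is decreasing for $n\ge4$, so the left-hand side is increasing in $n$; then the single check at $n=4$ (which you carry out correctly) does suffice. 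Alternatively, you can follow the paper and bound more crudely by replacing $k$ with the larger value $2^{n-1}$ before applying $(eN/k)^k$, which avoids the delicate estimate altogether at the cost of a slightly weaker constant.
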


\begin{proof}
We work out the upper bound as an application of the Asymptotic Upper Bound Theorem. Note that when we talk about rays of $\SC$, we are actually considering its quotient modulo its lineality.
The pointed cone $\SC/\lineal(\SC)$ has dimension $2^n-n-1$. It is well known that it has $\binom{n}{2}2^{n-2}$ facets, which are given by the submodular inequalities of the form
\[f(X \cup y) - f(X) \geq f(X \cup \{y,z\}) - f(X \cup z),\]
as shown for example in \cite[Thm.~7.2]{ArdilaCastilloEurPostinkov2020-CoxeterSubmodular}.
See \Cref{fig:SC3} for an illustration of the case $n=3$.
Hence it is a cone over a polytope of dimension $2^n-n-2$ with $n(n-1)2^{n-3}$ facets (and the rays of $\SC$ are in bijection with vertices of this polytope).
By the application of the Asymptotic Upper Bound \Cref{thm:UBTgeneral}, and the usual inequality $\binom{a}{b}\leq \left(\frac{a}{b}\right)^b e^b$, we get that:
$$t_n \,\leq\, 2\,\binom{n(n-1)2^{n-3}}{2^{n-1} - \frac{n}{2}} \,\leq\, 2\,\binom{n(n-1)2^{n-3}}{2^{n-1}} \,\leq\, 2\,\left(\frac{n^2 2^{n-3}}{2^{n-1}}\right)^{2^{n-1}}e^{2^{n-1}} \,\leq\, 2\, \left(\frac{e^{1/2}}{4}\, n\right)^{2^{n}}.$$
As $\frac{e^{1/2}}{4} \approx 0.41$, we have $t_n \leq n^{2^n}$ for all $n\geq 4$, which gives the desired upper bound.
\end{proof}

Our inductive lower bound on the number of rays of the submodular cone in \cref{cor:ExplicitLowerBoundNumberOfRays} shows that $ t_n\geq a_n\geq 656^{2^{n-5}}$. Compared with the upper bound above gives:

\begin{theorem}\label{thm:UpperLowerBoundsTnSn}
For all $n\geq 4$:
$$\begin{array}{cccccccr}
     2^{2^{n-2}} &\leq& a_n &\leq& t_n &\leq& n^{2^{n}}&\text{ or equivalently }\\
     n-2 &\leq& \log_2\log_2 a_n &\leq& \log_2\log_2 t_n &\leq& n + \log_2\log_2 n 
\end{array}$$
\end{theorem}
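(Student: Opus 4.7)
The theorem bundles together three inequalities and a logarithmic reformulation. The rightmost inequality $t_n \leq n^{2^n}$ is exactly Proposition~\ref{prop:upperbound}, already proved via the Asymptotic Upper Bound Theorem. The middle inequality $a_n \leq t_n$ is immediate from the definitions, since the rays counted by $a_n$ form a subset of the rays counted by $t_n$. So the real work is establishing the leftmost inequality $2^{2^{n-2}} \leq a_n$.

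For the lower bound, the plan is to recycle the iterative construction from \Cref{cor:CollectionOfRays}, but to observe that it actually produces rays lying in the smaller set counted by $a_n$. The key point is that the top and bottom faces of $\liftGPsumLM$ are, by construction, translated dilates of $\pol$ and $\polQ$ respectively. Consequently, if $\pol$ and $\polQ$ are indecomposable, then so are the top and bottom faces of $\liftGPsumLM$. Starting from any pairwise fertile family of indecomposable deformed $k$-permutahedra and iterating the construction, every polytope produced at level $n$ therefore has indecomposable top and bottom faces, so belongs to $a_n$. In particular, the proof of \Cref{cor:LowerBoundNumberOfRays} applies verbatim with $a_n$ in place of $t_n$, yielding $a_n \geq p_k^{2^{n-k}}$ for any $n \geq k$.

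Applying this with the family of 656 pairwise fertile rays in $\SC[5]$ from \Cref{exm:FamillyOfPairwiseFertileN5} gives $a_n \geq 656^{2^{n-5}}$ for all $n \geq 5$. The numerical comparison $656 \geq 256 = 2^8$ then yields
\[a_n \geq 656^{2^{n-5}} \geq 256^{2^{n-5}} = 2^{8\cdot 2^{n-5}} = 2^{2^{n-2}},\]
which is the desired lower bound for $n \geq 5$; the case $n = 4$ (requiring $a_4 \geq 16$) is checked directly against the known list of $37$ rays of $\SC[4]$. The logarithmic form is then immediate: double-logging gives $\log_2\log_2 a_n \geq n-2$ and $\log_2 \log_2 t_n \leq \log_2(2^n \log_2 n) = n + \log_2 \log_2 n$. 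The main conceptual subtlety is the observation that the construction preserves indecomposability of top and bottom faces (so the lower bound is really about $a_n$, not just about $t_n$); the rest is arithmetic, hinging on the fact that the explicit clique of \Cref{exm:FamillyOfPairwiseFertileN5} is large enough to exceed the threshold $256$.
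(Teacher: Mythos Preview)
Your proof follows the same strategy as the paper's: the paper simply writes ``Our inductive lower bound \dots\ shows that $t_n\geq a_n\geq 656^{2^{n-5}}$. Compared with the upper bound above gives:'' and then states the theorem. You have made explicit the two points the paper leaves implicit, namely (i) that the iterated construction $\liftGPsumLM$ lands in the set counted by $a_n$ because its top and bottom faces are positive dilates of $\pol,\polQ$ (this is exactly the remark in the proof of \Cref{cor:LowerBoundNumberOfRays} that ``the bottom and top faces \dots\ are (isomorphic to) $\pol$ and $\polQ$''), and (ii) the arithmetic $656\geq 2^8$.

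One small slip: your sentence ``yielding $a_n \geq p_k^{2^{n-k}}$ for any $n \geq k$'' overshoots by one. The polytopes in the \emph{starting} family $\c C_k$ are not themselves produced by the construction, so there is no reason their $\pm\b e_k$-faces are indecomposable; your argument only gives $a_n\geq p_k^{2^{n-k}}$ for $n\geq k+1$. With $k=5$ this yields $a_n\geq 656^{2^{n-5}}\geq 2^{2^{n-2}}$ for $n\geq 6$, leaving both $n=4$ and $n=5$ to be checked directly (not just $n=4$). The paper glosses over the same boundary cases, so this is a shared loose end rather than a divergence in method.
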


Written in this form, we see that there is still room for improvement, but as we will discuss now, our contribution represents significative improvement with respect to the previous bounds.
As far as we are aware, the previous best known lower bound for $t_n$ was actually a lower bound on 
$\mathdefn{m_n}$, which is the number of rays of $\SC$ which contain $0/1$-polytopes (\ie the vertices of the polytope are in $\{0,1\}^n$).
Indeed, a $0/1$ deformed $n$-permutahedron is precisely the matroid polytope defined as $\pol_{\c M} = \conv\{\b e_B ~;~ B \text{ is a basis of } \c M\}$ for some matroid $\c M$ on the ground set $[n]$.
According to \cite[Proposition 10.4.12]{Nguyen1986-SemimodularFunctions}, the \textbf{indecomposable} $0/1$ deformed $n$-permutahedra are in bijection with the \textbf{connected} matroids on the ground set $[n]$: hence $m_n$ is the number of connected matroids.
A lower bound on the number of matroids was established by Knuth \cite{Knuth-NumberOfMatroids}, while Mayhew--Newman--Welsh--Whittle \cite{MayhewNewmanWelshWhittle-NumberConnectedMatroids} proved that the number of connected matroids being asymptotically at least $\frac{1}{2}$ of the total number of matroids (see \cite{BansalPendavinghVanDerPol2015-NumberMatroids} for a presentation of the literature on the bounds of the number of matroids). We use the following upper bound on the number of matroids from \cite{BansalPendavinghVanDerPol2015-NumberMatroids}.

\begin{theorem}[{\cite[Thm.~1]{BansalPendavinghVanDerPol2015-NumberMatroids}}]\label{thm:BoundsMn}
	For $n\to +\infty$, we have: %
	$$\log_2\log_2 m_n \leq n - \frac{3}{2}\log_2 n + \frac{1}{2}\log_2 \frac{2}{\pi} + 1 + o(1).$$
\end{theorem}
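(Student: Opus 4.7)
The target estimate $m_n \leq 2^{2^{n}/(cn^{3/2})}$ (which is what the stated inequality encodes once one peels off the double logarithm) is notoriously delicate: the leading term $n$ comes from a trivial bound, but shaving off the $\frac{3}{2}\log_2 n$ correction forces one to exploit the matroid axioms quantitatively rather than just the cardinality of $\binom{[n]}{r}$. My plan would follow the broad strategy of Bansal--Pendavingh--Van der Pol: encode each matroid by a succinct description and count the descriptions.

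First I would reduce to the ``middle rank'' case. A matroid of rank $r$ on $[n]$ has at most $\binom{n}{r}$ bases, so at most $2^{\binom{n}{r}}$ matroids of rank $r$ exist; the maximum over $r$ is attained at $r=\lfloor n/2\rfloor$, and the sum over $r$ differs from the maximum only by a polynomial factor that gets absorbed into the $o(1)$ term. This reduces the problem to bounding the number of matroids of rank $r=\lfloor n/2 \rfloor$, where the trivial upper bound $\binom{n}{r}\sim \sqrt{2/\pi n}\cdot 2^n$ gives $\log_2\log_2 m_n\leq n - \tfrac{1}{2}\log_2 n + \tfrac{1}{2}\log_2(2/\pi) + o(1)$, which is off by the factor $\log_2 n$ we want to save.

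Second, I would set up the compression. For a matroid $M$ of rank $r$, the non-bases among the $r$-subsets are not arbitrary: by the exchange axiom every non-basis $N\in\binom{[n]}{r}$ is contained in some hyperplane of $M$, i.e.\ a flat of rank $r-1$. The collection of hyperplanes forms a ``cover'' of the non-bases, and the key combinatorial lemma is that this cover is efficient: one can choose a family $\mathcal{H}$ of hyperplanes of size roughly $\binom{n}{r-1}/r$, whose $r$-subsets account for all non-bases. Thus, rather than specifying each of the $\binom{n}{r}$ subsets as basis/non-basis independently, one can specify the hyperplane cover and the matroid is essentially determined. Counting the number of such families of subsets of $\binom{[n]}{r-1}$ gives the improvement by a factor of $r\sim n/2$ in the inner exponent, yielding the $\log_2 n$-saving over the trivial bound and producing the term $-\tfrac{3}{2}\log_2 n$.

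Third, to extract the precise constants $\tfrac{1}{2}\log_2(2/\pi) + 1$, I would run the entropy/compression argument carefully: encode $M$ by (i) the rank $r$, (ii) the hyperplane cover $\mathcal{H}$, and (iii) a correction list resolving the few ambiguities where several cyclic flats coincide. Tracking the information content of each piece, using Stirling to estimate $\binom{n}{r-1}$, and summing over $r\approx n/2$ should assemble into the claimed inequality. The main obstacle I anticipate is the combinatorial lemma on efficient hyperplane covers: showing that such a cover of size $\binom{n}{r-1}/r$ exists for \emph{every} matroid requires the ``local'' argument of BPvdP that exploits the fact that each hyperplane of $M$ is an $(r-1)$-flat with many $r$-supersets, and it is here that the sharp constant $1$ on the right-hand side is pinned down. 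Without this sharp local lemma one would obtain the correct $n^{3/2}$ order but a weaker constant.
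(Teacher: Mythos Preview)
The paper does not prove this bound from scratch. The theorem is explicitly attributed to Bansal--Pendavingh--van der Pol, and the paper's two-line argument consists only of: (i) invoking \cite[Prop.~10.4.12]{Nguyen1986-SemimodularFunctions} to identify $m_n$ (the number of rays of $\SC$ containing a $0/1$-polytope) with the number of \emph{connected} matroids on $[n]$, and (ii) observing that connected matroids are a subset of all matroids, so the upper bound on the total number of matroids from \cite[Theorem~1]{BansalPendavinghVanDerPol2015-NumberMatroids} applies verbatim. That is the entire content of the proof here.

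Your proposal instead attempts to reconstruct the Bansal--Pendavingh--van der Pol argument itself, which is far more than the paper requires. The outline is in the right spirit (reduce to rank $\lfloor n/2\rfloor$, then encode the matroid by a cover of the non-bases rather than recording each $r$-set independently), but the compression step is not quite a ``family of hyperplanes of size $\binom{n}{r-1}/r$'' as you describe: the actual BPvdP proof hinges on a local-cover lemma in the Johnson graph together with an existence result for covering designs, and it is that machinery---not the hyperplane heuristic---that nails down the constants $-\frac{3}{2}\log_2 n + \frac{1}{2}\log_2\frac{2}{\pi}+1$. So your sketch would need substantial tightening at precisely the point you flag as the ``main obstacle''. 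For the purposes of the present paper, however, none of this is needed: the result is quoted, not reproved, and the only paper-specific step you are missing is the identification of $m_n$ with connected matroids via Nguyen.
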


\begin{proof}
	By \cite[Prop.~10.4.12]{Nguyen1986-SemimodularFunctions}, the indecomposable $0/1$ deformed $n$-permutahedra are in bijection with the connected matroids on the ground set $[n]$.
	By \cite[Theorem 1]{BansalPendavinghVanDerPol2015-NumberMatroids}, the number of matroids (and hence of the connected ones) is upper bounded by the claimed bound.
\end{proof}

Note that this is smaller than $2^{c\cdot 2^n\cdot n^{-3/2}} \leq t_n$ for some constant $c > 0$.
Consequently, taking an indecomposable deformed $n$-permutahedron ``at random'', the probability that it is possible to write it with $0/1$-coordinate goes to $0$ as $n\to+\infty$.

\begin{corollary}
    When $n\to+\infty$, we have $\frac{m_n}{a_n} \to 0$ and $\frac{m_n}{t_n}\to 0$.
\end{corollary}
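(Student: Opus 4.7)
The plan is to combine the lower bound $a_n \geq 2^{2^{n-2}}$ from Theorem~\ref{thm:UpperLowerBoundsTnSn} with the upper bound on $m_n$ from Theorem~\ref{thm:BoundsMn} and compare asymptotics. Since $a_n \leq t_n$, we have $m_n/t_n \leq m_n/a_n$, so it suffices to show that $m_n/a_n \to 0$.

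Taking logarithms twice, Theorem~\ref{thm:BoundsMn} gives $\log_2 \log_2 m_n \leq n - \tfrac{3}{2}\log_2 n + O(1)$, so $\log_2 m_n \leq C \cdot 2^n / n^{3/2}$ for some constant $C>0$ and all sufficiently large $n$. On the other hand, Theorem~\ref{thm:UpperLowerBoundsTnSn} yields $\log_2 a_n \geq 2^{n-2} = 2^n/4$. Subtracting,
\[
\log_2\!\left(\frac{a_n}{m_n}\right) \;\geq\; 2^n\!\left(\frac{1}{4} - \frac{C}{n^{3/2}}\right),
\]
which tends to $+\infty$ (in fact at least like $2^n/8$ for $n$ large enough). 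Hence $a_n/m_n \to +\infty$, so $m_n/a_n \to 0$, and a fortiori $m_n/t_n \to 0$.

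I do not foresee any real obstacle here: the work of relating the indecomposable $0/1$ deformed permutahedra to connected matroids via \cite{Nguyen1986-SemimodularFunctions} has already been recorded in the proof of Theorem~\ref{thm:BoundsMn}, and the doubly-exponential gap between $2^{2^{n-2}}$ and $2^{2^{n - (3/2)\log_2 n + O(1)}}$ is comfortable. The only point worth stating carefully is that the inequality $a_n \leq t_n$ is used to transfer the conclusion from $a_n$ to $t_n$; this is immediate from the definitions, since the rays counted by $a_n$ form a subfamily of those counted by $t_n$.
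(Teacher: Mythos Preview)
Your proposal is correct and follows essentially the same approach as the paper: combine the upper bound on $m_n$ from Theorem~\ref{thm:BoundsMn} with the lower bound on $a_n$ from Theorem~\ref{thm:UpperLowerBoundsTnSn}, and deduce $m_n/t_n\to 0$ from $m_n/a_n\to 0$ via $a_n\le t_n$. If anything, your write-up is more explicit than the paper's, which only records that $\log_2\log_2 m_n - \log_2\log_2 a_n < 0$ for large $n$; your computation of $\log_2(a_n/m_n)\ge 2^n\bigl(\tfrac14 - C/n^{3/2}\bigr)\to+\infty$ makes the conclusion $m_n/a_n\to 0$ (rather than merely $m_n<a_n$) fully transparent.
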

\begin{proof}
Using \Cref{thm:BoundsMn}, we have that $\log_2 \log_2 m_n - \log_2\log_2 a_n < 0$ for $n$ large enough, which implies the second statement.
\end{proof}

\begin{remark}
Note that we clearly have the strict inequality $a_n < t_n$, as already some rays of $\SC[4]$ have top and bottom faces which are both decomposable: according to \Cref{exm:FertilitySmallExample}, only segments, triangles, square pyramids, and octahedra can be constructed from pairs of rays of $\SC[3]$, hence the \emph{Strawberry} and \emph{Persimmon} from \Cref{fig:4Permutahedra} cannot.
\end{remark}

\begin{remark}
In our inductive construction of rays of $\SC[n+1]$ from pairs of rays of $\SC$, we proved that every seed can be used to construct a ray of $\SC[n+1]$.
Yet, we only used the fact that there exists at least 1 seed for each fertile pair (there might be many), and only used families of pairwise fertile pairs (so we used only a clique instead of the whole fertility graph).
Denoting $b_n$ the number of rays captured by the method of \Cref{ssec:NewRays}, we showed $b_{n+1} \geq b_n^2$; combining with \Cref{ssec:IndependencePolytopes}, we may improve to $b_{n+1} \geq b_n^2 + 2 b_n$. We did not pursue this path because it complicates the exposition without giving any asymptotic improvement to the lower bound.

\end{remark}
This raises the natural following open questions:

\begin{question}
What is the asymptotic behavior of the ratio $\frac{a_n}{t_n}$? %
\end{question}

\begin{question}
For a fertile pair $(\pol, \polQ)\in \SC^2$ (\ie $|\Lambda(\pol, \polQ)|\geq 1$), what is the \emph{typical} size of the seed-set $\Lambda(\pol, \polQ)$?
\end{question}

Denoting $c_k$ the average cardinal of $\Lambda(\pol, \polQ)$ for fertile pairs in $\SC[k]$, one would need that $\sum_{k = 0}^n \frac{\log_2 c_k}{2^k} \to +\infty$ when $n\to +\infty$, in order to hope to improve the order of magnitude of the exponent in our lower bound: using our current method,
this seems unlikely.

\appendix 

\section{Beyond rays: Inductive construction of the face lattice of $\SC$}\label{sec:equalitysets}
In this appendix, we push our inductive method further in order to construct the whole face lattice of $\SC$.
Recall that $\pol\normeq \pol'$ if $\pol$ and $\pol'$ are \defn{normally equivalent}, \ie if $\pol\deformed\pol'$ and $\pol'\deformed \pol$.

\subsection{Equality sets and admissibility cones}

For $\polR, \polS\in \SC[n+1]$, if $\polR\normeq\polS$, then $\polR^{+\b e_{n+1}}\normeq\polS^{+\b e_{n+1}}$ and $\polR^{-\b e_{n+1}}\normeq\polS^{-\b e_{n+1}}$.
However, %
in \Cref{fig:VariousGPsums}, the polytopes $\pol_1, \dots, \pol_4$ are normally equivalent, but the polytopes $\GPsum[\polQ][\pol_1],\dots, \GPsum[\polQ][\pol_4]$ are not.
This motivates the definition of equality sets, in order to store the additional information needed for retrieving the normal equivalent class of $\GPsum$ from the normal equivalent classes of $\pol$ and $\polQ$, see \Cref{fig:VariousGPsums,fig:SC3,fig:SC3_generic} for illustrations.

\begin{definition}\label{def:EqualitySet}
For $\pol, \polQ\in \SC$ their \defn{equality set} is
$\mathdefn{\b E(\pol, \polQ)} \coloneqq \{(i, \sigma)\in [n]\times \c S_n ~;~ \polQ_i^\sigma = \pol_i^\sigma\}$, and their \defn{poset of valid equality sets} is:
$$\mathdefn{\c E(\pol, \polQ)} \coloneqq \{\b E(\pol', \polQ') ~;~ \pol', \polQ'\in \SC,\, \pol'\sim \pol \text{ and } \polQ'\sim \polQ\}.$$
\end{definition}

\begin{remark}\label{rmk:EmptyIsMax}
We will order the poset of valid equality sets by reverse inclusion $\supseteq$.
By \Cref{rmk:GenericGPsum}, we have: $\emptyset\in \c E(\pol, \polQ)$ for all $\pol, \polQ\in \SC$; hence $\emptyset$ is always the maximum of $\c E(\pol, \polQ)$.
\end{remark}

The main object of this appendix allowing us to study the faces of $\DefoCone[\GPsum]$ is:

\begin{definition}
The \defn{$\b E$-admissibility cone} of $\pol, \polQ\in\SC$ with $\b E \in \c E(\pol, \polQ)$ is:
$$\mathdefn{\admissconeE} \coloneqq \bigl\{\GPsum[\polQ'][\pol'] ~;~ \pol'\deformed\pol,~ \polQ'\deformed\polQ, \text{ satisfying } \b E(\pol', \polQ') \supseteq \b E\bigr\}.$$

\end{definition}

Note that the $\emptyset$-admissibility cone is the usual admissibility cone.
In the following, we extend the results of \Cref{ssec:AdmissCones} to $ \b E$-admissibility cones.
First, we generalize \Cref{prop:AdmissConeIsPolyhedral}.

\begin{proposition}\label{prop:EAdmissConeIsPolyhedral}
	For $\pol, \polQ\in \SC$, the cone $\admissconeE$ is a polyhedral cone which is linearly isomorphic to the cone $\bigl(\DefoCone[\polQ] \times \DefoCone\bigr) \cap \bigl\{ (\polQ, \pol) ~;~ \pol,\polQ\in \SC, \text{ with } \b E(\pol, \polQ) \supseteq \b E\bigr\}$.
\end{proposition}

\begin{proof}
The cone $\bigl\{ (\polQ',\pol')\in \SC^2 ~;~ \b E(\pol', \polQ') \supseteq \b E\bigr\}$ is polyhedral, because it is the intersection of $\SC^2$ with the hyperplanes $(\pol')_i^\sigma=(\polQ')_i^\sigma$ for $(i,\sigma)\in \b E$, and the half-spaces of the form $(\pol')_i^\sigma\leq(\polQ')_i^\sigma$ for $(i,\sigma)\notin \b E$.
We conclude by noting that $\admissconeE$ is the image of this polyhedral cone under the linear map $(\polQ,\pol) \mapsto \GPsum$ (see \Cref{lem:GPsumIsLinear}).
\end{proof}

Now we generalize \Cref{thm:AdmissibilityConeIsDeformationCone}. To do so, we need the following lemma:

\begin{lemma}\label{lem:EPQintersections}
	For $\pol_1,\pol_2, \polQ_1, \polQ_2\in \SC$ with $\pol_1\normeq\pol_2$ and $\polQ_1\normeq\polQ_2$, if $\b E_1 \coloneqq \b E(\pol_1, \polQ_1)$ and $\b E_2 \coloneqq \b E(\pol_2, \polQ_2)$, then $\b E(\pol_1+\pol_2,\, \polQ_1 + \polQ_2) = \b E_1\cap\b E_2$.
	Consequently, for $\pol, \polQ\in \SC$, if $\b E_1, \b E_2\in \c E(\pol, \polQ)$, then
	$\b E_1\cap \b E_2\in \c E(\pol, \polQ)$.
\end{lemma}

\begin{proof}
	For any $(i, \sigma)\in \b E_1\cap \b E_2$, we have:
	$(\polQ_1+\polQ_2)_i^\sigma = (\polQ_1)_i^\sigma + (\polQ_2)_i^\sigma = (\pol_1)_i^\sigma + (\pol_2)_i^\sigma = (\pol_1+\pol_2)_i^\sigma $.
	Conversely, if $(i, \sigma)\notin \b E_1\cap \b E_2$, we have:
	$(\polQ_1+\polQ_2)_i^\sigma = (\polQ_1)_i^\sigma + (\polQ_2)_i^\sigma < (\pol_1)_i^\sigma + (\pol_2)_i^\sigma = (\pol_1+\pol_2)_i^\sigma $, where the strict inequality comes from the fact that one term is strictly smaller than its counterpart (and the other term weakly smaller).
	Hence $\b E(\pol_1+\pol_2,\, \polQ_1 + \polQ_2) = \b E_1\cap\b E_2$.
	
	The second fact is a consequence of the first, taking $\pol_1, \pol_2\normeq\pol$, and $\polQ_1, \polQ_2\normeq\polQ$. %
\end{proof}

\begin{figure}
	\centering
	\includegraphics[width=0.75\linewidth]{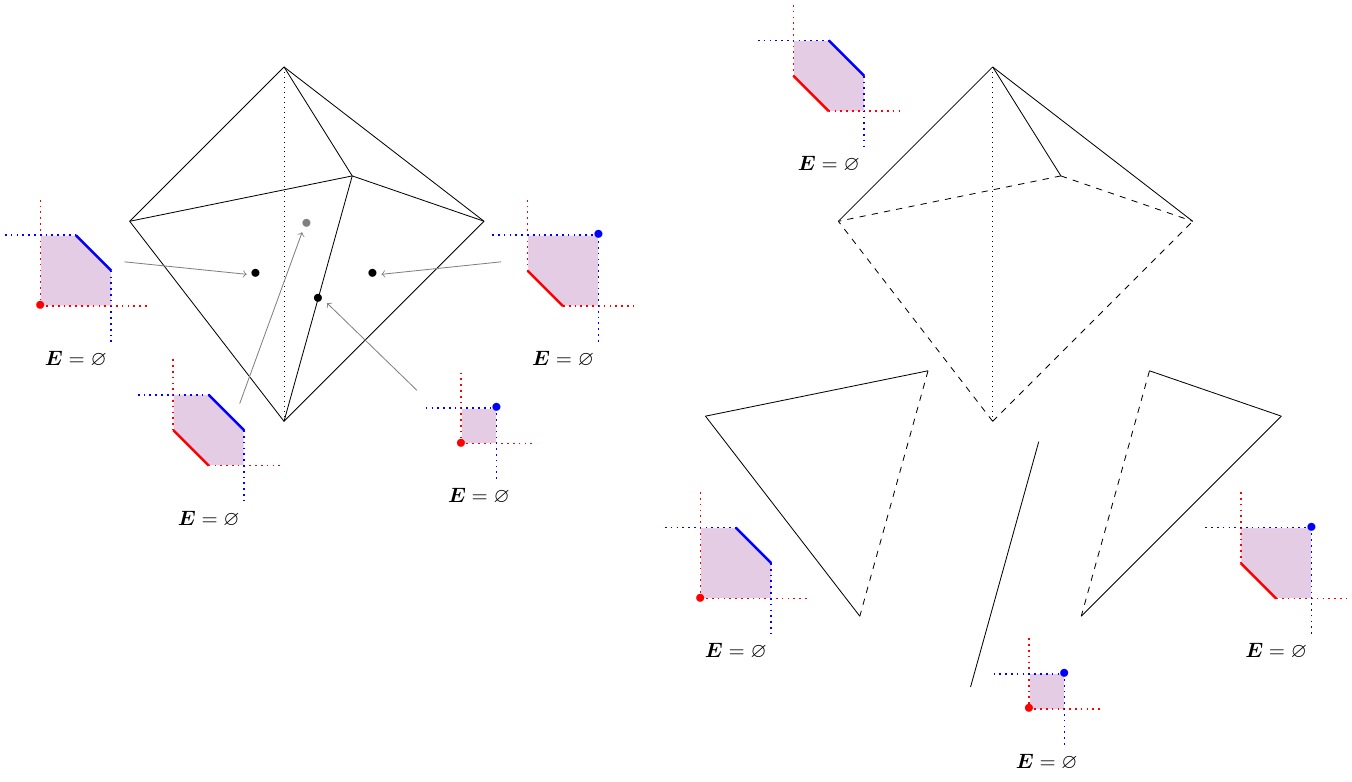}
	\caption[{Partition of $\SC[3]$ into lattices of equality sets}]{(Left) The four faces of $\SC[3]$ of the form $\DefoCone[\genericGPsum]$.
		(Right) The decomposition of $\SC[3]$ into four collections of faces.
		Each collection corresponds to the faces $\DefoCone[\GPsum]$ with a fixed class of normally equivalence for the top and bottom faces.
		For instance, the bottom right (half-open) triangle corresponds to the three faces $\DefoCone[\GPsum]$ where $\polQ$ is a segment and $\pol$ is a point.}
	\label{fig:SC3_generic}
\end{figure}

We now prove that the faces of $\SC[n+1]$ are precisely the $\b E$-admissibility cones 
parameterized by pairs of elements in $\SC$. 

\begin{theorem}\label{thm:EAdmissibilityConeIsDeformationCone}
	For $\pol, \polQ\in \SC$ with $\b E \coloneqq \b E(\pol, \polQ)$, we have:
	\begin{align*}
		\DefoCone[\GPsum]  &= \admissconeE&\text{ and }&&
		\DefoCone[\liftGPsum]  &= \admissconeE\oplus \R\b e_{n+1}.
	\end{align*}
\end{theorem}

\begin{proof}
	These two equalities are equivalent by \Cref{thm:bijections}.	
    We mimic the proof of \Cref{thm:AdmissibilityConeIsDeformationCone}.
	
	Let $\polR_0=\GPsum$ and choose representatives $\polR_1,\dots,\polR_s$ of all the other normal equivalence classes of polytopes in~$\admissconeE$.
    Let $\polS=\sum_{i=0}^s \polR_i$. 
    We will show $\admissconeE= \DefoCone[\polS]$, and $\polS\normeq\GPsum$. 

    Any $\polR\in \admissconeE$ is normally equivalent to some $\polR_i$, which is a Minkowski summand of $\polS$ by construction. Therefore, 
	$\polR\in \DefoCone[\polS]$ and thus $\admissconeE\subseteq \DefoCone[\polS]$.
    
	For the reverse inclusion, observe that $\polR_i\in \admissconeE$ implies the existence of $\pol_i\deformed\pol$ and $\polQ_i\deformed\polQ$ with $\polR_i=\GPsum[\pol_i][\polQ_i]$ and $\b E_i \coloneqq \b E(\pol_i, \polQ_i)$ for some $\b E_i \supseteq \b E$. 
    Hence, we get $\pol[S] = \sum_{0\leq i\leq s}(\GPsum[\pol_i][\polQ_i]) = \GPsum[\left(\sum_{0\leq i\leq s}\polQ_i\right)][\left(\sum_{0\leq i\leq s}\pol_i\right)]$.
	In particular $\polS^{+\b 1}=\sum_{0\leq i\leq s}\pol_i\normeq\pol$, because $\pol_0=\pol$ and $\pol_i\deformed\pol$ for all $i$, and similarly $\polS^{-\b 1}=\sum_{0\leq i\leq s}\polQ_i\normeq\polQ$.
	By 
	\Cref{lem:EPQintersections}, we get $\b E\left(\sum_{0\leq i\leq s}\pol_i,\,\sum_{0\leq i\leq s}\polQ_i\right) = \bigcap_{0\leq i\leq s} \b E(\pol_i,\polQ_i) = \bigcap_{0\leq i\leq s} \b E_i = \b E$, because $\b E_0=\b E$ and $\b E_i\supseteq \b E$ for all $i$.
	Thus, $\pol[S]\in \admissconeE$.%

    Now, let $\polR\in \DefoCone[\polS]$ be a deformation of $\polS$. 
	Then $\pol'\coloneqq \polR^{+\b 1}\deformed \polS^{+\b 1}\deformed \pol$ and $\polQ'\coloneqq \polR^{-\b 1}\deformed \polQ$. 
	To show that $\polR\in\admissconeE$, it remains to prove that $\b E(\pol', \polQ')\supseteq \b E$.
	Note that, as $\polR\deformed\polS$, there exists $\pol[T]\in \DefoCone[\polS]$ such that $\polR + \pol[T] = \polS$. Hence, by what we have just proven, $\pol[T] = \GPsum[\polQ''][\pol'']$ for some $\pol''\deformed\pol$ and $\polQ''\deformed\polQ$, implying that $\polS = \GPsum[(\polQ'+\polQ'')][(\pol' + \pol'')]$, by \Cref{lem:GPsumIsLinear}.
	Fix $(i, \sigma)\in \b E$.
	If $(\polQ')_i^\sigma< (\pol')_i^\sigma$, then $(\polQ'+\polQ'')_i^\sigma < (\pol'+\pol'')_i^\sigma$, contradicting the fact that $\polS\in \admissconeE$.
	Consequently, $(\polQ')_i^\sigma = (\pol')_i^\sigma$ for all $(i,\sigma)\in \b E$, and $\pol[R]\in \admissconeE$, proving: $\admissconeE = \DefoCone[\polS]$.
	
	It remains to show that $\polS\normeq\GPsum$. %
    By \Cref{prop:EAdmissConeIsPolyhedral}, the cone $\admissconeE$ is linearly isomorphic to $\bigl(\DefoCone[\polQ] \times \DefoCone\bigr) \cap \bigl\{ (\polQ, \pol) ~;~ \pol,\polQ\in \SC, \text{ with } \b E(\pol, \polQ) \supseteq \b E\bigr\}$ via $(\polQ,\pol)\mapsto \GPsum$.
    As it satisfies $\polQ^\sigma_i < \pol^\sigma_i$ for all $(i,\sigma) \notin \b E$, the pair $(\polQ,\pol)$ lies in the interior of this cone.
    Hence, the polytope $\GPsum$ lies in the interior of $\DefoCone[\polS] = \admissconeE$, implying $\polS \sim\GPsum$.
\end{proof}

We can use \Cref{thm:EAdmissibilityConeIsDeformationCone} to parameterize of the face lattice $\scr L(\SC[n+1])$ by $\scr L(\SC)\times\scr L(\SC)$.
Note that it implies that $\GPsum[\polQ'][\pol']\deformed\GPsum$ if and only if $\pol'\deformed\pol$, $\polQ'\deformed\polQ$, and $\b E'\supseteq\b E$.
Therefore, via the map $(\pol, \polQ, \b E) \mapsto \DefoCone[{\lift\!(\GPsum)}]$, the lattice $\scr L(\SC[n+1])$ is in bijection with the lattice of triples $(\pol, \polQ, \b E)$ with $\pol, \polQ\in \SC$ and $\b E\in \c E(\pol, \polQ)$, ordered by the relations:
\begin{compactitem}
\item $(\pol', \polQ', \b E')\deformed(\pol, \polQ, \b E)$ if and only if $\pol'\deformed\pol$, $\polQ'\deformed\polQ$, and $\b E'\supseteq\b E$; and
\item $(\pol', \polQ', \b E')\normeq(\pol, \polQ, \b E)$ if and only if $\pol'\normeq\pol$, $\polQ'\normeq\polQ$ and $\b E'=\b E$. 
\end{compactitem}

This bijection hints at studying two subposets of $\scr L(\SC[n+1])$:
for fixed $\pol, \polQ$, the poset $\c E(\pol, \polQ)$; and for fixed $\b E$, the poset of $(\pol, \polQ)\in \SC^2$ with $\b E(\pol, \polQ) = \b E$.
The latter is of special interest when $\b E = \emptyset$, as we discuss in the next subsection.

\begin{figure}
    \centering
    \includegraphics[width=0.8\linewidth]{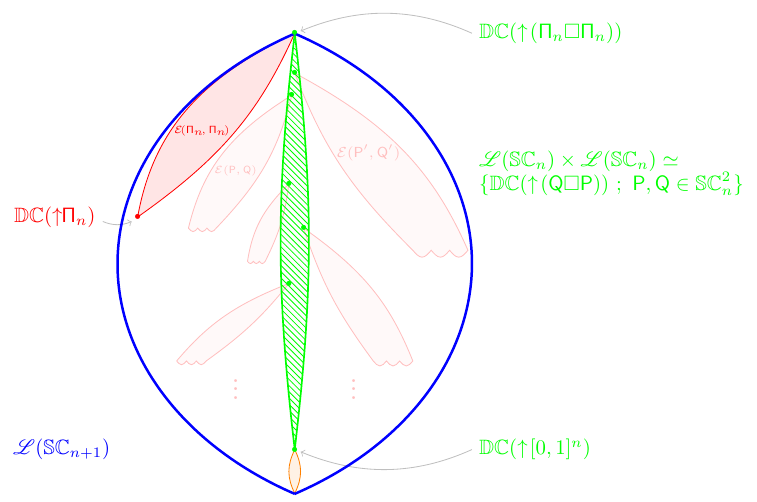}
    \caption{The lattice {\color{blue} $\scr L(\SC[n+1])$} is partitioned into join-semi-lattices {\color{pink} $\c E(\pol, \polQ)$} for $\pol, \polQ\in \SC^2$, each having a {\color{green} unique maximum} associated to the generic GP-sum $\genericGPsum$, but possibly several minima.
    The set of all generic GP-sums is the face figure of the (face associated to) the cube $[0, 1]^n$.
    This face figure is isomorphic to $\scr L(\SC)\times \scr L(\SC)$.
    Besides, the face associated to the cube $[0, 1]^n$ is itself a simplicial cone, so {\color{orange} $\c E(\b 0, \b 1)$} is a Boolean lattice.}
    \label{fig:FaceLatticeVsEqualitySets}
\end{figure}

\subsection{Generic GP-sums}
For fix $\pol,\polQ\in\SC$, consider the subposet of faces of $\SC[n+1]$ of the form $\DefoCone[{\lift\!(\GPsum[\polQ'][\pol'])}]$ for some $\pol'\normeq \pol$ and $\polQ'\normeq \polQ$.
By \Cref{thm:EAdmissibilityConeIsDeformationCone}, it is in bijection with the poset of valid equality sets $\c E(\pol, \polQ)$.
As said in \Cref{rmk:EmptyIsMax}, $\emptyset$ is always the maximum of $\c E(\pol, \polQ)$, independently of $\pol, \polQ\in \SC$, which motivates the following definition.

\begin{definition}
A \defn{generic GP-sum} of $\pol$ and $\polQ$ is: $\mathdefn{\genericGPsum}\coloneqq \GPsum[\polQ'][\pol']$, for some $\pol\normeq \pol'$ and $\polQ\normeq \polQ'$ with $\b E(\pol', \polQ') = \emptyset$.
\end{definition}

Different choices of $\pol', \polQ'$ with $\b E(\pol', \polQ') = \emptyset$ yield normally equivalent polytopes by \Cref{thm:EAdmissibilityConeIsDeformationCone}.
Moreover, as immediate consequence of \Cref{thm:EAdmissibilityConeIsDeformationCone}, we get the following description:

$$\c E(\pol, \polQ) \cong \scr L\bigl(\DefoCone[\genericGPsum]\bigr) \,\ssm\, \bigcup_{\substack{\pol'\deformed\pol,\, \polQ'\deformed\polQ \\ \pol'\not\normeq\pol \text{ or } \polQ'\not\normeq\polQ}} \scr L\bigl(\DefoCone[{\genericGPsum[\polQ'][\pol']}]\bigr).$$
	
Thus, $\c E(\pol,\polQ)$ is a graded lattice minus one of its ideals, see \Cref{fig:SC3_generic,fig:FaceLatticeVsEqualitySets}, and we get that:
\begin{compactenum}
\item[(a)] $\c E(\pol, \polQ)$ is a join-semi-lattice which is a sub-semi-lattice of $\scr L\bigl(\DefoCone[\genericGPsum]\bigr)$ (we actually proved the ``join'' part in \Cref{lem:EPQintersections});
\item[(b)] $\c E(\pol, \polQ)$ is co-graded, \ie for every $\b E\in\c E(\pol,\polQ)$, all saturated chains from $\b E$ to the maximum of $\c E(\pol,\polQ)$ (namely $\emptyset$) have the same length;
\item[(c)] $\c E(\pol, \polQ)$ is co-atomistic, \ie every element can be described as the meet of a subsets of co-atoms (a co-atom is an element covered by the maximum, namely here $\emptyset$).
\item[(d)] $\c E(\pol, \polQ)$ is the join-semi-lattice of ``closed faces'' (\ie the set of maximizers of a linear functional, when it exists) of the half-open cone:
$$\interior(\DefoCone\times\DefoCone[\polQ]) \cap \bigl\{ (\pol, \polQ) ~;~ \pol,\polQ\in \SC, \text{ with } \b E(\pol, \polQ) \supseteq \b E\bigr\};$$
where ``$\interior$'' denotes the relative interior of a cone, see \Cref{fig:SC3_generic} (right).
\end{compactenum}
	
In particular, we can partition $\scr L(\SC[n+1])$ (and more generally any $\scr L(\DefoCone[\genericGPsum])$) into join-semi-lattices of equality sets $\c E(\pol, \polQ)$, each of them having a unique maximum $\DefoCone[\genericGPsum]$.
Put together, these join-semi-lattices form a partition of $\scr L(\DefoCone[\genericGPsum])$, see \Cref{fig:SC3_generic,fig:FaceLatticeVsEqualitySets}.

Moreover, the collection of all these maxima is itself well-behaved inside $\scr L(\SC[n+1])$.
Recall that, in a cone $\polC$, the \defn{face figure} associated to the face $\polF$ of $\polC$ is the sublattice of $\scr L(\polC)$ formed by all the faces $\pol[G]$ of $\polC$ satisfying $\polF \subseteq \pol[G]$.

\begin{corollary}\label{cor:CubeIsInsideGenericGPsums}
The set $\bigl\{\DefoCone[\lift\!\genericGPsum] ~;~ \pol, \polQ\in \SC^2\bigr\}$ is the face figure of $\DefoCone[{\lift\!{([0, 1]^n)}}]$ inside $\scr L(\SC[n+1])$.
In particular, this defines a lattice isomorphism between $\scr L(\SC)\times\scr L(\SC)$ and the face figure of $\DefoCone[{\lift\!{([0, 1]^n)}}]$ inside $\scr L(\SC[n+1])$.
\end{corollary}

\begin{proof}
	Note that $[0, 1]^n = \GPsum[\b 0][\b 1]$ is the GP-sum for $(\b 0, \b 1)$ with equality set $\b E(\b 0, \b 1) = \emptyset$.
	By \Cref{thm:EAdmissibilityConeIsDeformationCone}, the cone $\DefoCone[{\GPsum[\b 0][\b 1]}]$ is a face of $\DefoCone[\GPsum]$ if and only if $\b 0\deformed\polQ$, and $\b 1\deformed\pol$ and $\emptyset\supseteq\b E$. 
    Note that the two first conditions hold for any $\pol,\polQ$, but the last condition holds if and only if $\b E = \emptyset$.
	Consequently, the faces of $\SC[n+1]$ containing $\DefoCone[{\lift\!{\!([0, 1]^n)}}]$  are precisely the faces of the form $\DefoCone[\lift\!{\genericGPsum}]$ for some $\pol,\polQ\in \SC^2$, ordered with the product order  $\genericGPsum\deformed\genericGPsum[\polQ'][\pol']$ if and only if $\polQ\deformed\polQ'$ and $\pol\deformed\pol'$. Thus, $\bigl\{\DefoCone[\lift\!\genericGPsum] ~;~ \pol, \polQ\in \SC^2\bigr\}$ is isomorphic to $\scr L(\SC)\times\scr L(\SC)$.    
\end{proof}

\begin{remark}
We have $\GPsum[\b 0][\b 1] = [0, 1]^n$, and $\lift\!{([0, 1]^n)} = \prod_{i=1}^n [\b0, \b e_i - \b e_{n+1}]$.
The cone $\DefoCone[{\GPsum[\b 0][\b 1]}]$ is the deformation cone of a parallelotope: it is simplicial (see \cite{CastilloDoolittleGoecknerRossYing-MinkowskiSummandsCubes,PadrolPilaudPoullot-DeformedGraphicalZonotopes, PadrolPoullot-NewCriteriaIndecomposability}).
\end{remark}

\begin{remark}
For $\pol\in \SC$, it is immediate to see that $\c E(\pol, \pol)$ is isomorphic to the interval inside the lattice $\scr L(\SC[n+1])$ between $\DefoCone[{\lift\!(\GPsum[\pol][\pol])}] \simeq \DefoCone$ and the generic GP-sum $\DefoCone[{\lift\!(\genericGPsum[\pol][\pol])}]$.
In particular, $\c E(\pol[\Pi]_n, \pol[\Pi]_n)$ is isomorphic to the face figure of $\DefoCone[{\lift{\!\pol[\Pi]_n}}]$ inside $\scr L(\SC[n+1])$, see \Cref{fig:FaceLatticeVsEqualitySets}.
\end{remark}

\subsection{Dimension of deformation cones of generic GP-sums}

\begin{theorem}\label{thm:dimCadmissibility}
Let $\pol, \polQ\in\SC$, then:
$\dim\DefoCone[\lift\!(\genericGPsum)] = \dim\DefoCone + \dim\DefoCone[\polQ] + 1$.
\end{theorem}

\begin{proof}
Lifts add $1$ to the dimension.
By \Cref{prop:AdmissConeIsPolyhedral},
$\DefoCone[\genericGPsum]  = \admisscone$ is isomorphic to a subset of $\DefoCone \times \DefoCone[\polQ]$.
Hence, $\dim\admisscone \leq \dim\DefoCone + \dim\DefoCone[\polQ]$.
Conversely, fix $\pol,\polQ\in\SC$ with $\b E(\pol,\polQ) = \emptyset$.
Having an empty equality set amounts to satisfying some inequalities \emph{strictly}.
For any $\pol'\in \DefoCone$ in an arbitrarily small ball in $\DefoCone$ centered at $\pol$ (of dimension $\dim\DefoCone$), and respectively for~$\polQ'$, the same inequalities are satisfied strictly, hence $\b E(\pol', \polQ') = \emptyset$.
As $\pol'\sim\pol$, and $\polQ'\sim\polQ$, and $\b E(\pol',\polQ') = \b E(\pol, \polQ)$, \Cref{thm:EAdmissibilityConeIsDeformationCone} gives: $\GPsum[\polQ'][\pol'] = \genericGPsum[\polQ'][\pol']\in\interior\bigl(\admisscone\bigr)$.
Thus, we get: $\dim\admisscone\geq \dim\DefoCone + \dim\DefoCone[\polQ]$. %
\end{proof}

\begin{remark}
One also gets:
$\dim\DefoCone[\GPsum] = \dim\DefoCone + \dim\DefoCone[\polQ] - \corank_{\c E(\pol,\polQ)}\b E$, by \Cref{thm:EAdmissibilityConeIsDeformationCone}, where $\mathdefn{\corank_{\c E(\pol,\polQ)}\b E}$ denotes the co-rank of $\b E$ in the join-semi-lattice $\c E(\pol,\polQ)$ (where $\corank_{\c E(\pol,\polQ)}\emptyset = 0$).
Since $\c E(\pol, \polQ)$ is hard to determine, we do not use this general formula.
\end{remark}

Besides, the equality set formulation allows to access the facets of generic faces of $\SC[n+1]$.
We denote $\mathdefn{\facetnumber\pol[C]}$ the number of facets of the cone $\pol[C]$, and $\mathdefn{\coatomnumber\c E(\pol,\polQ)}$ the number of elements covered by $\emptyset$ in $\c E(\pol,\polQ)$.
For $\pol, \polQ\in \SC$, one can prove that:
$\coatomnumber\c E(\pol,\polQ) \geq n$, and that
$\facetnumber\DefoCone[\lift\!(\genericGPsum)] \geq \facetnumber\DefoCone \,+\, \facetnumber\DefoCone[\polQ] \,+\, \coatomnumber\c E(\pol,\polQ)$.

We now use generic GP-sums to bound the number of faces of $\SC$, and leave as an open question to explore these properties of non-generic GP-sums in order to improve these bounds.

\subsection{Estimating the $f$-vector of the submodular cone}\label{sec:NumbersAndBoundsOfFaces}

We extend our analysis of \cref{ssec:NewRays} to bound the number of faces of the submodular cone.

We denote $\mathdefn{f(n, k)}$ the number of $k$-faces of $\SC$, and let $\mathdefn{F_n(X)} \coloneqq \sum_{k\geq n} f(n, k)\, X^k$.
Let $\mathdefn{q_n} \coloneqq F_n(1)$ be the total number of faces of $\SC$.
A quick computer experiment gives:
\begin{align*}
F_3(X) =&~ X^3 + 5X^4 + 9 X^5 + 6 X^6 + X^7,&\text{ and }\\
F_4(X) =&~ X^4 + 37 X^5 + 356 X^6 + 1596 X^7 + 3985 X^8 + 5980 X^9 + 5560 X^{10} \\&+ 3212 X^{11} + 1128 X^{12} + 228 X^{13} + 24 X^{14} + X^{15}
\end{align*}

For polynomials $P = \sum_k p_kX^k$, and $Q = \sum_k q_kX^k$, we write $\mathdefn{P \succeq Q}$ to mean $p_k \geq q_k$ for all $k$.

\begin{corollary}\label{cor:FpolynomialSCn}
For all $n$, we have:
$F_{n+1}(X) \succeq X\cdot F_n(X)^2$. Hence: $F_n(X) \succeq X^{2^{n-k}-1} \cdot F_k^{2^{n-k}}$, for all $k < n$.
Moreover, the total number of faces $q_n$ of $\SC$ satisfies $q_n \geq 22\, 108^{2^{n-4}}$ for $n\geq 4$.
\end{corollary}

\begin{proof}
By \Cref{cor:CubeIsInsideGenericGPsums,thm:dimCadmissibility}, there is an injection $\scr L(\SC)^2 \hookrightarrow \scr L(\SC[n+1])$, such that the dimension of the face associated to $(\polF, \pol[G]) \in \scr L(\SC)^2$ is $\dim\polF + \dim\pol[G] + 1$.
This implies that for all $n, a$, we have: $f(n+1, a) \geq \sum_{i+j = a-1} f(n, i)\cdot f(n, j)$.
This gives the first claim, and the second claim follows.
The third claim comes from $q_n = F_n(1) \geq F_4(1)^{2^{n-4}}$, where $F_4(1) = 22\,108$.
\end{proof}

\begin{remark}
Note that the highest dimensional face of $\SC$ has dimension $2^n - 1$.
The previous corollary fails at grasping the number of low dimensional faces.
For instance, the rays of $\SC$ are faces of dimension $n+1$.
However, the non-zero coefficient of lowest degree in $F_4^{2^{n-4}}$ is of degree $2^{n-2}$, so \Cref{cor:FpolynomialSCn}, given $F_4$, yields a non-trivial lower bound only for $f(n, k)$ with $k\geq 2^{n-2}$.
\end{remark}

Recall that a face of $\SC$ is simplicial if and only if it is a cone over a simplex, of equivalently if its number of facets equals its dimension minus its lineality.
We can lower bound the number of non-simplicial faces using the following lemma:

\begin{lemma}\label{lem:DCPQwelcomesDCP}
Let $\pol, \polQ\in \SC$, then $\scr L(\DefoCone)$ is isomorphic to the interval of $\DefoCone[\genericGPsum]$ between $\DefoCone[{\genericGPsum[\polQ'][\b 1]}]$ and $\DefoCone[{\genericGPsum[\polQ'][\pol]}]$ for any $\polQ'\in \DefoCone[\polQ]$.
\end{lemma}

\begin{proof}
Follows from \Cref{thm:EAdmissibilityConeIsDeformationCone}, because $\emptyset\in \c E(\pol', \polQ')$ for any $\pol',\polQ'\in \SC$ by \Cref{rmk:GenericGPsum}.
\end{proof}

\begin{remark}
Note that, in general, a face $\polF$ of a polytope $\pol$ is \textbf{not} a deformation of $\pol$, hence it is \textbf{not} true that $\DefoCone[\polF]\subseteq\DefoCone[\pol]$.
The previous lemma is really specific to our situation.
\end{remark}

\begin{corollary}\label{cor:NumberNonSimplicialFaces}
The number of faces of $\SC$ that are not simplicial is greater than $\prod_{2\leq m < n} q_m$.
\end{corollary}

\begin{proof}
By \Cref{lem:DCPQwelcomesDCP}, if $\DefoCone$ is a non-simplicial face of $\SC$, then $\DefoCone[\genericGPsum]$ and $\DefoCone[{\lift\!(\genericGPsum)}]$ are also not simplicial (because all intervals of Boolean lattices are Boolean).
Hence, denoting $w_n$ the number of non-simplicial faces of $\SC$, we have $w_{n+1} \geq w_n \cdot q_n$, yielding $w_n\geq \prod_{2\leq m < n} q_m$.
\end{proof}

In the Upper Bound \Cref{thm:UBTgeneral}, the number of facets is maximized by neighborly polytopes, which are simplicial polytopes\footnote{Not to be confused with a simplicial cone, which is a cone over a simplex.}. %
\Cref{cor:NumberNonSimplicialFaces} states that $\SC$ is ``far'' from being (a cone over) a simplicial polytope: it has ``a lot'' of faces which are not (a cone over) simplices.
We leave as an open question to quantitatively improve the upper bound in \Cref{thm:UpperLowerBoundsTnSn} using this discrepancy.

Yet, some faces of $\SC$ are simplicial (implying that faces of these faces are too), 
notably the face associated to Loday's associahedron \cite{ArkaniHamedBaiSongYan-ScateringAmplitudes}, to the cube \cite{CastilloDoolittleGoecknerRossYing-MinkowskiSummandsCubes}, to some specific nestohedra \cite[Prop.~3.29 \& 3.32]{PadrolPilaudPoullot-DeformationConeNestohedra}, to graphical zonotopes for triangle-free graphs \cite[Cor.~2.10]{PadrolPilaudPoullot-DeformedGraphicalZonotopes}.
We think that these \emph{uniquely decomposable} instances (c.f.~\cite{PadrolPoullot-NewCriteriaIndecomposability}) should be regarded as rare and intriguing.
We conjecture:

\begin{conjecture}
When $n\to +\infty$, the number of simplicial faces of the submodular cone $\SC$ is negligible in comparison to the (total) number of faces of $\SC$.
\end{conjecture}

\bibliographystyle{alpha}
\bibliography{Biblio.bib}
\label{sec:biblio}

\end{document}